\theoremstyle{plain}
\newtheorem{thm}{\protect\theoremname}[section]
\theoremstyle{plain}
\newtheorem{assumption}[thm]{\protect\assumptionname}
\theoremstyle{definition}
\newtheorem{defn}[thm]{\protect\definitionname}
\theoremstyle{plain}
\newtheorem{prop}[thm]{\protect\propositionname}
\theoremstyle{definition}
\newtheorem{example}[thm]{\protect\examplename}
\theoremstyle{plain}
\newtheorem{lem}[thm]{\protect\lemmaname}
\renewcommand*{\@fnsymbol}[1]{\@arabic{#1}}
\providecommand{\assumptionname}{Assumption}
\providecommand{\definitionname}{Definition}
\providecommand{\examplename}{Example}
\providecommand{\lemmaname}{Lemma}
\providecommand{\propositionname}{Proposition}
\providecommand{\theoremname}{Theorem}
\begin{document}
\title{Accelerating invasions along an environmental gradient}
\date{\date{}}
\author{\textbf{Gwenaël Peltier}\thanks{IMAG, Univ. Montpellier, CNRS, Montpellier, France. E-mail: gwenael.peltier@umontpellier.fr}}

\maketitle
\tableofcontents{}
\begin{abstract}
We consider a population structured by a space variable and a phenotypical
trait, submitted to dispersion, mutations, growth and nonlocal competition.
This population is facing an environmental gradient: the optimal trait
for survival depends linearly on the spatial variable. The survival
or extinction depends on the sign of an underlying principal eigenvalue.
We investigate the survival case when the initial data satisfies a
so-called heavy tail condition in the space-trait plane. Under these
assumptions, we show that the solution propagates in the favorable
direction of survival by accelerating. We derive some precise estimates
on the location of the level sets corresponding to the total population
in the space variable, regardless of their traits. Our analysis also
reveals that the orientation of the initial heavy tail is of crucial
importance.\\
\\
\uline{Key Words:} structured population, nonlocal reaction-diffusion
equation, propagation, accelerating fronts.\\
\\
\uline{AMS Subject Classifications:} 35Q92, 45K05, 35B40, 35K57.\newpage{}
\end{abstract}

\section{Introduction}

In this paper we study the propagation phenomena of the solution $n(t,x,y)$
to the following nonlocal parabolic Cauchy problem

\begin{equation}
\begin{cases}
\partial_{t}n-\partial_{xx}n-\partial_{yy}n=\left(r(y-Bx)-{\displaystyle \int_{\mathbb{R}}}K(t,x,y,y^{\prime})n(t,x,y^{\prime})dy^{\prime}\right)n, & t>0,\,(x,y)\in\mathbb{R}^{2},\\
\vspace{-0.2cm} & \vspace{-0.2cm}\\
n(0,x,y)=n_{0}(x,y), & (x,y)\in\mathbb{R}^{2}.
\end{cases}\label{eq:dim_2_xy}
\end{equation}
We shall prove that if the initial data $n_{0}\geq0$ has a heavy
tail, in a sense to be precised later, then any solution of \eqref{eq:dim_2_xy}
either goes extinct, or spreads in the favorable direction $y-Bx=0$
by \textit{accelerating}.

Equation \eqref{eq:dim_2_xy} arises in some population dynamics models,
see \cite{MirRao_13,Pre_04}. In this context $n(t,x,y)$ represents
a density of population at each time $t\geq0$, structured by a space
variable $x\in\mathbb{R}$ and a phenotypical trait $y\in\mathbb{R}$.
This population is subject to four biological processes : migration,
mutations, growth and competition. The diffusion operators $\partial_{xx}n$
and $\partial_{yy}n$ account for migration and mutations respectively.
The growth rate of the population is given by $r(y-Bx)$, where $r$
is negative outside a bounded interval. This corresponds to a population
facing an environmental gradient : to survive at location $x$, an
individual must have a trait close to the optimal trait $y_{opt}=Bx$
with $B>0$. Thus, for invasion to occur, the population has to adapt
during migration. As a consequence, it is expected that the population,
if it survives, remains confined in a strip around the optimal line
$y-Bx=0$, where $r$ is typically positive. Finally, we consider
a logistic regulation of the population density that is local in the
spatial variable and nonlocal in the trait. In other words, we consider
that, at each location, there exists an intra-specific competition
(for e.g. food) between all individuals, regardless of their traits.

The well-posedness of a Cauchy problem very similar to \eqref{eq:dim_2_xy},
but on a bounded domain, has been investigated in \cite[Theorem I.1]{Pre_04}.
As mentioned in \cite{AlfBerRao_17}, we believe the arguments could
be adapted in our context to show the existence of a global solution
on an unbounded domain. A limiting argument would then provide the existence
of solutions to \eqref{eq:dim_2_xy} in the whole domain, thanks to
the estimates on the tails of the solutions obtained in Lemma \ref{lem:n_estimates}.
However, in this article, our main interest lies in the qualitative
properties of the solutions.

\subparagraph{Survival vs extinction.}

As it is well known, the survival or extinction of the population (starting
from a localized area) depends on the sign of the generalized principal
eigenvalue $\lambda_{0}$ of the elliptic operator $-\partial_{xx}n-\partial_{yy}n-r(y-Bx)n$.
If $\lambda_{0}>0$, the population goes extinct exponentially fast
in time at rate $-\lambda_{0}$. If $\lambda_{0}<0$, the model, which
is of Fisher-KPP type \cite{Fis_37,KolPetPis_37}, satisfies the Hair-Trigger
Effect : any nonnegative initial data $n_{0}\not\equiv0$ leads to
the survival of the population and its spreading to the whole space.
Since we are concerned with propagation results, we shall assume that
$\lambda_{0}<0$ and $n_{0}\not\equiv0$ in the rest of this introduction.

\subparagraph{The local case.}

When the competition term in \eqref{eq:dim_2_xy} is replaced by a
local (in $x$ and $y$) regulation, the equation satisfies the comparison
principle and, moreover, one can assume $B=0$ without loss of generality
(through a rotation of coordinates). This yields to one of the models
considered in \cite{BerCha_12}, where the authors show the existence
of travelling waves $\varphi(x-ct,y)$ solutions of the equation for
speeds $c$ greater than or equal to a critical value $c^{\ast}>0$.
Besides, there exists a unique positive stationary solution, which
depends only on $y\in\mathbb{R}$, denoted here by $S(y)$, and all
travelling waves connect the state $S(y)$ when $x\rightarrow-\infty$
to zero when $x\rightarrow+\infty$.

One of the main results in \cite{BerCha_12} concerns the Cauchy problem.
When the initial data is compactly supported and satisfies $n_{0}(x,y)\leq S(y)$,
the solution $n(t,x,y)$ converges locally uniformly in $x$ towards
$S(y)$, while propagating at speed $c^{\ast}$ in both directions
$x\rightarrow\pm\infty$.

\subparagraph{The nonlocal but decoupled ($B=0$) case.}

It is worth mentioning that the model \eqref{eq:dim_2_xy} when $B=0$
has been analyzed in \cite{BerJinSyl_14}. In this context, one can
decouple variables $x$ and $y$, leading to a sequence of scalar
Fisher-KPP equations, each obtained by projection on each eigenfunction
of the elliptic operator $-\partial_{yy}-r(y)$. This technique \cite{BerJinSyl_14}
allows to prove, again, the existence of fronts $\varphi(x-ct,y)$
for speeds $c\geq c^{\ast}=2\sqrt{-\lambda_{0}}$, as well as the
survival of the population and its spreading at speed $c^{\ast}$
for compactly supported initial data. 

Akin to the local model in \cite{BerCha_12}, there exists a unique
positive stationary state $S=S(y)$ towards which the solution $n(t,x,y)$
converges. When $K\equiv1$, the state $S$ is actually the principal
eigenfunction associated to $\lambda_{0}$ (with a unique choice of
a multiplicative constant for $S$ to be a positive steady state).
For example, when $r$ is quadratic, $S$ is gaussian since it satisfies
the equation of the harmonic oscillator.

\subparagraph{The model \eqref{eq:dim_2_xy}.}

The propagation phenomenon of problem \eqref{eq:dim_2_xy} has been
investigated in \cite{AlfBerRao_17,AlfCovRao_12}. Notice that the
authors in \cite{AlfBerRao_17} also allow the environmental gradient
to be shifted (say by Global Warming) at a given forced speed. Since
$B\neq0$, the decoupling argument \cite{BerJinSyl_14} cannot be
invoked here, which makes the analysis more involved. As far as travelling
waves are concerned, problem \eqref{eq:dim_2_xy} admits fronts of
the form $\varphi(x-ct,y-Bx)$ only for speeds $c\geq c^{\ast}=2\sqrt{\frac{-\lambda_{0}}{1+B^{2}}}$.
However, the construction of those waves relies on a topological degree
argument \cite{AlfCovRao_12} and little is known about their behavior
for $x\rightarrow-\infty$, with $y-Bx$ being constant. This is caused
by the presence of a nonlocal competition term in \eqref{eq:dim_2_xy},
which prevents the equation to enjoy the comparison principle, see
also \cite{AlfCov_12,BerNadPerRyz_09} for similar issues related
to the scalar nonlocal Fisher-KPP equation.

The results \cite{AlfBerRao_17} for the Cauchy problem \eqref{eq:dim_2_xy}
are the following~: if $n_{0}$ is compactly supported, the total
population at $(t,x)$, given by $N(t,x)=\int_{\mathbb{R}}n(t,x,y)dy$,
spreads at speed $c^{\ast}$. While the convergence of the solution
towards a possible steady state remains an open question, it is worth
pointing out that the population density remains mainly concentrated
around the optimal trait $y=Bx$.

\subparagraph{Accelerating invasions in the one-dimensional case.}

Before going further, let us here consider the scalar Fisher-KPP equation
\cite{Fis_37,KolPetPis_37}, say, for simplicity,
\[
\partial_{t}u-\partial_{xx}u=ru(1-u),\qquad t>0,\,x\in\mathbb{R},
\]
for some $r>0$. A result from Hamel and Roques \cite{HamRoq_10}
shows that if the initial data displays a heavy tail, i.e. decays
more slowly than any exponentially decaying function, then the population
invades the whole space by accelerating. This is in contradistinction
with the well-studied case of exponentially bounded initial data,
where the level sets of the solution spread at finite speed, see\textbf{
}\cite{AroWei_78,Uch_78}. The authors also derive sharp estimates
of the location, at large time, of these level sets \cite{HamRoq_10}.

\subparagraph{Accelerating invasions in related models.}

Let us mention that acceleration also occurs for compactly supported
initial data if the diffusion term is replaced with a fractional laplacian
$-(-\partial_{xx})^{\alpha}u$, $0<\alpha<1$, see \cite{CabRoq_13},
or with a convolution term $J\ast u-u$ where the kernel $J=J(x)$
admits a heavy tail in both directions $x\rightarrow\pm\infty$ \cite{Gar_11}.
The latter case corresponds to models of population dynamics with
long-distance dispersal.

The so-called cane-toad equation proposed in \cite{BenCalMeuVoi_12} is another biological invasion model where acceleration may occur. When the trait space is unbounded, propagation of the level sets of order $O(t^{3/2})$ has been predicted in \cite{BouCalMeuMirPerRao_12}. This was then proved rigorously in \cite{BerMouRao_15} with a local competition term, and in \cite{BouHenRhy_16} for both local and nonlocal (in trait) competition, using probabilistic and analytic arguments respectively. Notice however that acceleration is not induced here by initial heavy tails, but by a phenotype-dependent term before the spatial diffusion. 

\subparagraph{Accelerating invasions in model \eqref{eq:dim_2_xy}.}

Our aim is to study accelerating invasions in model \eqref{eq:dim_2_xy},
that is to determine if acceleration occurs if $n_{0}$ displays a
``heavy tail'', a notion that needs to be precised in the two-dimensional
framework. First, we prove that if $n_{0}$ has a ``heavy tail''
in the favorable direction $y-Bx=0$, then acceleration of the invasion
occurs. Second, we derive precise estimates for the large-time location
of the level sets of the solution. Finally, we also address the case
where the ``heavy tail'' of $n_{0}$ is not positioned along the
direction $y-Bx=0$: in this case, since ill-directed, the heavy tail
does not induce acceleration.

\section{Assumptions and main results}

\subsection{Functions $r$, $K$ and $n_{0}$}

Throughout the paper, we make the following assumption.
\begin{assumption}
\label{assu:func_r_k_n0}The function $r(\cdot)\in L_{loc}^{\infty}(\mathbb{R})$
is \textup{confining}, in the sense that, for all $\delta>0$, there
exists $R>0$ such that
\begin{equation}
r(z)\leq-\delta,\qquad\text{for almost all }z\text{ such that }|z|\geq R.\label{eq:r_condition}
\end{equation}
Additionally, there exists $r_{max}>0$ such that $r(z)\leq r_{max}$
almost everywhere. 

The function $K\in L^{\infty}((0,\infty)\times\mathbb{R}^{3})$ satisfies
\begin{equation}
k_{-}\leq K\leq k_{+},\qquad\text{a.e. on }(0,+\infty)\times\mathbb{R}^{3},\label{eq:K_bounds}
\end{equation}
for some $0<k_{-}\leq k_{+}$.

Moreover, the initial data $n_{0}$ is non identically zero, and there
exist $C_{0}>0$ and $\kappa_{0}>0$ such that 
\begin{equation}
0\leq n_{0}(x,y)\leq C_{0}e^{-\kappa_{0}|y-Bx|},\qquad\text{for almost all }(x,y)\in\mathbb{R}^{2}.\label{eq:borne_n0}
\end{equation}
\end{assumption}

An enlightening example of such function $r$ is given by $r(z)=1-Az^{2}$,
hence
\begin{equation}
r(y-Bx)=1-A(y-Bx)^{2},\label{eq:example_r}
\end{equation}
for some $A>0$. Notice that the width of the strip where $r$ is
nonnegative, that is the favorable region, is $\frac{2}{\sqrt{A(1+B^{2})}}$
(see Figure \ref{fig:2D_HT_support}). As a result, both parameters
$A$ and $B>0$ play a critical role to determine whether the population
goes extinct or survives.

Condition \eqref{eq:borne_n0} allows us to obtain estimates of the
tails of $n(t,x,y)$ in the direction $y-Bx\rightarrow\pm\infty$
as given by Lemma \ref{lem:n_estimates}. Note that condition \eqref{eq:borne_n0}
is not the aforementioned heavy tail condition, for any compactly
supported function satisfies it. Before stating our heavy tail condition,
we first need to consider some spectral problems.

\subsection{Some eigenelements\label{subsec:Eigen_intro}}

As in \cite[Section 4]{AlfBerRao_17}, rather than working in the
$(x,y)$ variables, let us write 
\[
n(t,x,y)=v(t,X,Y),
\]
where $X$ (resp. $Y$) represents the direction of (resp. the direction
orthogonal to) the optimal trait $y=Bx$, that is
\begin{equation}
X=\frac{x+By}{\sqrt{1+B^{2}}},\qquad Y=\frac{y-Bx}{\sqrt{1+B^{2}}}.\label{eq:XY_eq_xy}
\end{equation}
In these new variables, equation \eqref{eq:dim_2_xy} is recast 
\begin{equation}
\partial_{t}v-\partial_{XX}v-\partial_{YY}v=\left(\tilde{r}(Y)-\int_{\mathbb{R}}K(t,\chi,\psi,y^{\prime})v\left(t,\chi,\psi\right)dy^{\prime}\right)v,\label{eq:dim_2_XY}
\end{equation}
where we use the shortcuts
\[
\tilde{r}(Y)\coloneqq r\left(\sqrt{1+B^{2}}Y\right),\qquad\chi=\chi(X,Y,y^{\prime})\coloneqq\frac{\frac{X-BY}{\sqrt{1+B^{2}}}+By^{\prime}}{\sqrt{1+B^{2}}},\qquad\psi=\psi(X,Y,y^{\prime})\coloneqq\frac{-B\frac{X-BY}{\sqrt{1+B^{2}}}+y^{\prime}}{\sqrt{1+B^{2}}}.
\]
We also note $v_{0}(X,Y)=n_{0}(x,y)$ the initial data in the new
variables.

Next, as recalled in subsection \ref{subsec:Eigen_results}, we are
equipped with a generalized principal eigenvalue $\lambda_{0}\in\mathbb{R}$
and a generalized principal eigenfunction $\Gamma_{0}\in H_{loc}^{2}(\mathbb{R})$
satisfying
\begin{equation}
\begin{cases}
-\partial_{YY}\Gamma_{0}(Y)-\tilde{r}(Y)\Gamma_{0}(Y)=\lambda_{0}\Gamma_{0}(Y) & \text{for all }Y\in\mathbb{R},\\
\Gamma_{0}>0,\qquad\text{\ensuremath{\left|\left|\Gamma_{0}\right|\right|}}_{L^{\infty}(\mathbb{R})}=1.
\end{cases}\label{eq:Eigen_1D}
\end{equation}
It is worth noting that, in the particular case where $r$ is given
by \eqref{eq:example_r}, expression \eqref{eq:Eigen_1D} corresponds
to the harmonic oscillator, for which these eigenelements can be explicitly
computed as
\begin{equation}
\lambda_{0}=\sqrt{A(1+B^{2})}-1,\qquad\Gamma_{0}(Y)=\exp\left(-\frac{1}{2}\sqrt{A(1+B^{2})}Y^{2}\right).\label{eq:gamma0_ex}
\end{equation}

Finally, for $R>0$, let us consider $\lambda_{0}^{R}$, $\Gamma_{0}^{R}(Y)$
the principal eigenelements solving the Dirichlet problem on $(-R,R)$
\begin{equation}
\begin{cases}
-\partial_{YY}\Gamma_{0}^{R}(Y)-\tilde{r}(Y)\Gamma_{0}^{R}(Y)=\lambda_{0}^{R}\Gamma_{0}^{R}(Y) & \text{for }Y\in(-R,R),\\
\Gamma_{0}^{R}(Y)=0 & \text{for }Y=\pm R,\\
\Gamma_{0}^{R}(Y)>0 & \text{for }Y\in(-R,R),\\
||\Gamma_{0}^{R}||_{\infty}=1.
\end{cases}\label{eq:Eigen_R}
\end{equation}
As recalled in Proposition \ref{prop:monotony_eig}, there holds $\lambda_{0}^{R}\searrow\lambda_{0}$
as $R\rightarrow+\infty$.

\subsection{The heavy tail condition}

We can now turn to the two-dimensional heavy tail condition. First,
we recall or introduce some definitions for one-dimensional functions.
A function $w\colon\mathbb{R}\rightarrow\mathbb{R}$ is said to be
asymptotically front-like if 
\begin{equation}
\liminf_{-\infty}w>0,\quad w>0,\quad\lim_{+\infty}w=0.\label{eq:asymp_frontL}
\end{equation}
A positive function $w\colon\mathbb{R}\rightarrow\mathbb{R}$ is said
to have a heavy tail in $+\infty$ if
\begin{equation}
\lim_{x\rightarrow+\infty}w(x)e^{\varepsilon x}=+\infty,\qquad\forall\varepsilon>0,\qquad\qquad\text{(1D Heavy Tail)}.\label{eq:1D_heavy_tail}
\end{equation}
Typical examples are ``lighter heavy tails'' \eqref{eq:light_heavy_tail},
algebraic tails \eqref{eq:algebraic_tail}, and ``very heavy tails''
\eqref{eq:heavy_heavy_tail}, that is

\vspace{-0.4cm}

\begin{alignat}{3}
 & w(x)\sim Ce^{-bx^{a}}, & \quad & \text{as }x\rightarrow+\infty, & \qquad & \text{with }C,b>0\text{ and }a\in(0,1),\label{eq:light_heavy_tail}\\
 & w(x)\sim Cx^{-a}, & \quad & \text{as }x\rightarrow+\infty, & \qquad & \text{with }C,a>0,\label{eq:algebraic_tail}\\
 & w(x)\sim C(\ln x)^{-a}, & \quad & \text{as }x\rightarrow+\infty, & \qquad & \text{with }C,a>0.\label{eq:heavy_heavy_tail}
\end{alignat}

We now state our two-dimensional heavy tail condition for equation
\eqref{eq:dim_2_xy}. Note that this condition is expressed in the
new variables, thus it applies to $v_{0}$.
\begin{assumption}
[2D heavy tail condition]\label{assu:heavy_tail_v0}Let us consider
the new coordinates $(X,Y)$ given by \eqref{eq:XY_eq_xy}. The initial
data $v_{0}(X,Y)=n_{0}(x,y)$ is such that there exists $\underline{u}_{0}\in L^{\infty}(\mathbb{R})$
satisfying \eqref{eq:asymp_frontL}-\eqref{eq:1D_heavy_tail}, so
that

\begin{equation}
v_{0}(X,Y)\geq\underline{u}_{0}(X)\boldsymbol{1}_{[\sigma_{-},\sigma_{+}]}(Y),\qquad\qquad\emph{\text{(2D Heavy Tail)}}.\label{eq:2D_heavy_tail}
\end{equation}
for some reals $\sigma_{-}<\sigma_{+}$.
\end{assumption}

Let us emphasize that we do not assume that $0\in(\sigma_{-},\sigma_{+})$,
meaning the initial data may not overlap the optimal trait line (see
Figure \ref{fig:2D_HT_support}). Moreover, the interval $(\sigma_{-},\sigma_{+})$
may not only be arbitrarily far from zero, but also arbitrarily small.
The key assumption is the correct orientation of the heavy tail, that
is in the direction $X\rightarrow+\infty$, as highlighted by subsection
\ref{subsec:Bad_HT_results}. In the survival case $\lambda_{0}<0$
and under Assumption \ref{assu:heavy_tail_v0}, we shall prove that
the solution of \eqref{eq:dim_2_xy} is accelerating.\\

\begin{figure}
\centerline{\includegraphics[scale=0.47]{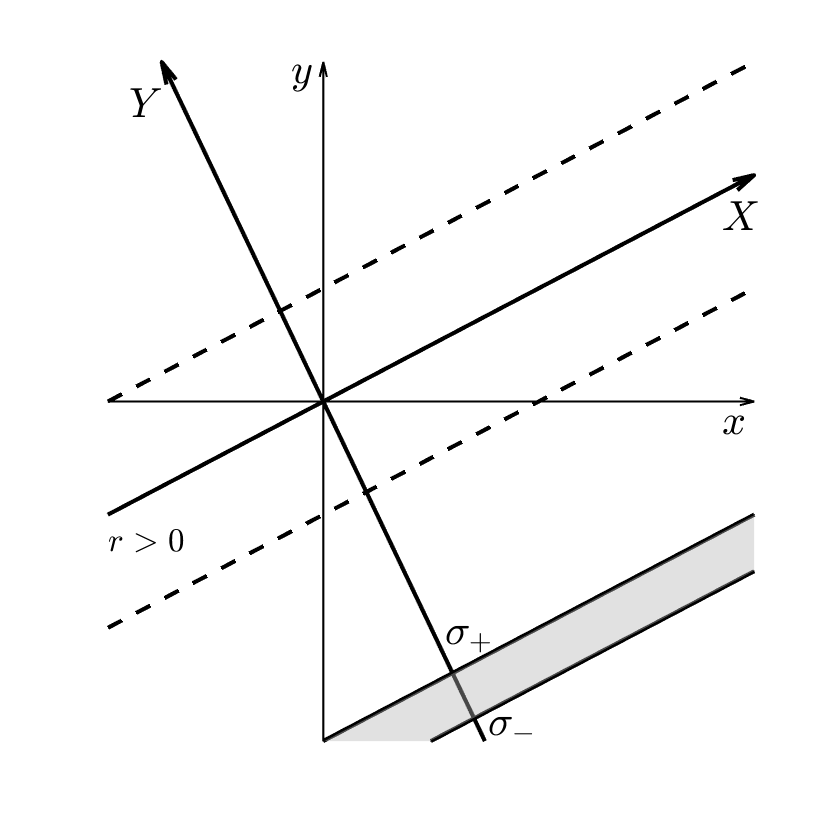}}

\caption{\label{fig:2D_HT_support}In grey, the region where $v_{0}(X,Y)$
is assumed to be greater than $\underline{u}_{0}(X)$, that is a one-dimensional
heavy tail in the $X$ direction. The dotted lines delimit the area
where $r>0$.}

\end{figure}
We now aim at providing a precise estimate of the location of the
level sets at large times. To do so, we first introduce the following
definition.
\begin{defn}
\label{def:condition_Q}A function $w$ is said to satisfy the condition
$(Q)$ if
\[
\begin{cases}
w\in L^{\infty}(\mathbb{R})\text{ and is uniformly continuous},\\
\liminf_{-\infty}w>0,\quad w>0,\quad\lim_{+\infty}w=0,\\
\exists\xi_{0}\in\mathbb{R}\text{ such that }w\text{ is }C^{2}\text{ and nonincreasing on }[\xi_{0},+\infty),\\
w^{\prime\prime}(x)=o(w(x))\text{ as }x\rightarrow+\infty,
\end{cases}\tag{\ensuremath{Q}}
\]
where $o$ denotes the Landau symbol \lq\lq little-o\rq\rq.
\end{defn}

Notice that any function $w$ satisfying $(Q)$ also satisfies $w^{\prime}(x)=o(w(x))$
as $x\rightarrow+\infty$, and thus displays a one-dimensional heavy
tail in $+\infty$, see \cite{HamRoq_10}. For the scalar Fisher-KPP
equation, when the initial data satisfies $(Q)$, the authors in \cite{HamRoq_10}
derived precise estimates on the location of the level sets of the
solution. In our context, we make the following assumption on the
initial data.
\begin{assumption}
[($Q$)-Initial bounds]\label{assu:encad_v0}Let us consider the new
coordinates $(X,Y)$ given by \eqref{eq:XY_eq_xy}. The initial data
$v_{0}(X,Y)=n_{0}(x,y)$ is such that there exist functions $\overline{u}_{0}$,
$\underline{u}_{0}$ satisfying $(Q)$ so that 
\begin{equation}
\underline{u}_{0}(X)\boldsymbol{1}_{[\sigma_{-},\sigma_{+}]}(Y)\leq v_{0}(X,Y)\leq\overline{u}_{0}(X)\Gamma_{0}(Y),\label{eq:encad_v0}
\end{equation}
for some reals $\sigma_{-}<\sigma_{+}$.
\end{assumption}

In particular, if the initial data satisfies Assumption \ref{assu:encad_v0},
then it satisfies Assumption \ref{assu:heavy_tail_v0}. As far as
the $Y$ direction is concerned, when $r$ is of the form \eqref{eq:example_r},
the eigenfunction $\Gamma_{0}$ is given by \eqref{eq:gamma0_ex},
so that \eqref{eq:encad_v0} amounts to a gaussian control on the
initial data. In the general case of a confining growth function \eqref{eq:r_condition},
one can prove that $\Gamma_{0}(Y)$ decays at least exponentially
when $|Y|\rightarrow+\infty$, see subsection \ref{subsec:Eigen_results}.
Under Assumption \ref{assu:encad_v0}, we shall derive some precise
estimates on the large-time position of the level sets, see Theorem
\ref{thm:gw_main}. 

\subsection{The extinction case}

As we shall see, under Assumption \ref{assu:func_r_k_n0}, the population
either goes extinct or survives depending on the sign of the principal
eigenvalue $\lambda_{0}$. In this short section we simply expose
the result of \cite{AlfCovRao_12}, which covers the case $\lambda_{0}>0$.
\begin{prop}
[Extinction case \cite{AlfCovRao_12}]\label{prop:extinction}Assume
$\lambda_{0}>0$. Let $r,K,n_{0}$ satisfy Assumption \ref{assu:func_r_k_n0}.
Suppose that there is $k>0$ such that
\[
n_{0}(x,y)\leq k\Gamma_{0}\left(\frac{y-Bx}{\sqrt{1+B^{2}}}\right).
\]
Then any global nonnegative solution of \eqref{eq:dim_2_xy} satisfies
\begin{equation}
n(t,x,y)\leq k\Gamma_{0}\left(\frac{y-Bx}{\sqrt{1+B^{2}}}\right)e^{-\lambda_{0}t},\label{eq:extinction_result}
\end{equation}
which implies $||n(t,\cdot,\cdot)||_{L^{\infty}(\mathbb{R}^{2})}=O(e^{-\lambda_{0}t})$,
that is an exponentially fast extinction.
\end{prop}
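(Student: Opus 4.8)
\subparagraph{Proof proposal.}
The natural strategy is a linearization/supersolution argument: since $\lambda_{0}$ governs the dynamics near the zero state, one builds a supersolution out of the principal eigenfunction $\Gamma_{0}$ and the exponential factor $e^{-\lambda_{0}t}$, and then compares. The key observation that makes this work despite the absence of a comparison principle for the nonlocal equation is that the competition term is \emph{nonnegative}, so that $n$ is a subsolution of the \emph{local} linearized equation, for which comparison does hold. Concretely, I would first pass to the rotated coordinates $(X,Y)$ of \eqref{eq:XY_eq_xy}, in which the Laplacian is invariant and \eqref{eq:dim_2_xy} becomes \eqref{eq:dim_2_XY} with $n(t,x,y)=v(t,X,Y)$; the hypothesis on $n_{0}$ then reads $v_{0}(X,Y)\le k\,\Gamma_{0}(Y)$. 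Set
\[
\bar v(t,X,Y):=k\,\Gamma_{0}(Y)\,e^{-\lambda_{0}t}.
\]
Since $\partial_{XX}\bar v=0$, the eigenvalue relation $-\Gamma_{0}''-\tilde r\,\Gamma_{0}=\lambda_{0}\Gamma_{0}$ from \eqref{eq:Eigen_1D} gives, after a one-line computation,
\[
\partial_{t}\bar v-\partial_{XX}\bar v-\partial_{YY}\bar v=k\,e^{-\lambda_{0}t}\bigl(-\lambda_{0}\Gamma_{0}(Y)-\Gamma_{0}''(Y)\bigr)=\tilde r(Y)\,\bar v(t,X,Y),
\]
so $\bar v$ is an exact solution of the local linear equation $\partial_{t}w-\partial_{XX}w-\partial_{YY}w=\tilde r(Y)\,w$.

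Next, because $K\ge k_{-}>0$ and $v\ge0$, the nonlocal integral term in \eqref{eq:dim_2_XY} is nonnegative, hence
\[
\partial_{t}v-\partial_{XX}v-\partial_{YY}v\le\tilde r(Y)\,v,
\]
i.e.\ $v$ is a subsolution of that same local linear equation, with $v(0,\cdot)\le\bar v(0,\cdot)$. Setting $z:=v-\bar v$, one gets $\partial_{t}z-\Delta z-\tilde r(Y)z\le0$ on $(0,\infty)\times\mathbb{R}^{2}$ and $z(0,\cdot)\le0$. Since $\tilde r\le r_{max}$ by Assumption \ref{assu:func_r_k_n0}, the substitution $\zeta:=z\,e^{-r_{max}t}$ turns this into $\partial_{t}\zeta-\Delta\zeta-(\tilde r-r_{max})\zeta\le0$ with a nonpositive zeroth-order coefficient; dropping that term on the set $\{\zeta>0\}$ shows that $\zeta^{+}$ is a subsolution of the heat equation with $\zeta^{+}(0,\cdot)=0$, whence $\zeta^{+}\equiv0$ and $v\le\bar v$. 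Undoing the rotation yields $n(t,x,y)\le k\,\Gamma_{0}\bigl(\tfrac{y-Bx}{\sqrt{1+B^{2}}}\bigr)e^{-\lambda_{0}t}$, and since $\|\Gamma_{0}\|_{L^{\infty}}=1$ this gives $\|n(t,\cdot,\cdot)\|_{L^{\infty}(\mathbb{R}^{2})}\le k\,e^{-\lambda_{0}t}$, i.e.\ exponentially fast extinction at rate $\lambda_{0}$.

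The step that requires genuine care is the comparison on the \emph{unbounded} domain $\mathbb{R}^{2}$: the maximum-principle conclusion $\zeta^{+}\equiv0$ is only valid within a class of functions with controlled growth at spatial infinity (a Phragm\'en--Lindel\"of / Gaussian-weight restriction), and here we do not have an a priori bound on $n$ coming from a comparison principle for the full nonlocal equation. This is precisely where the tail estimates of Lemma \ref{lem:n_estimates} enter: starting from $n_{0}\le C_{0}e^{-\kappa_{0}|y-Bx|}$ (and a fortiori from $n_{0}\le k\Gamma_{0}$), they ensure that $n$, hence $v$, stays bounded on $[0,T]\times\mathbb{R}^{2}$, so that $\zeta^{+}$ is bounded on finite time intervals and the heat-equation maximum principle applies. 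Beyond this point the argument is routine, and the result is exactly the one established in \cite{AlfCovRao_12}; I would simply present it as above and refer to that paper for the details of the unbounded-domain comparison.
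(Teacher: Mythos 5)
Your proposal is correct and coincides with the paper's own proof: the paper likewise observes that, since the nonlocal competition term is nonnegative, $n$ is a subsolution of the local linear operator $\partial_t-\partial_{xx}-\partial_{yy}-r(y-Bx)$ while $k\,\Gamma_0\bigl(\tfrac{y-Bx}{\sqrt{1+B^2}}\bigr)e^{-\lambda_0 t}$ is a supersolution (in fact an exact solution), and concludes by the maximum principle. Your added care about the comparison on the unbounded domain, handled via the tail bound from Lemma~\ref{lem:n_estimates}, is a welcome elaboration of a step the paper leaves implicit by citing \cite{AlfCovRao_12}.
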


The proof of Proposition \ref{prop:extinction} is elementary as $n(t,x,y)$
and the right-hand side of \eqref{eq:extinction_result} are respectively
subsolution and supersolution of the parabolic operator $\partial_{t}n-\partial_{xx}n-\partial_{yy}n-r(y-Bx)n$.
The maximum principle yields the result.

\subsection{Main result : acceleration in the invasion case\label{subsec:Main_results}}

We now investigate the case where the principal eigenvalue $\lambda_{0}$
is negative. In order to capture the spreading speed of the population
in the space variable, we look at the evolution of the total population
in $(t,x)$, regardless of their trait. Thus, for any $\mu>0$, we
define the level set of $n$ by 
\[
E_{\mu}^{n}(t)=\left\{ x\in\mathbb{R}\,\Bigl|\,\int_{\mathbb{R}}n(t,x,y)dy=\mu\right\} .
\]

Let us emphasize again that, because of the nonlocal competition term,
problem \eqref{eq:dim_2_xy} does not enjoy the comparison principle.
In such situation, and as mentioned in the introduction, the behavior
``behind the front'' is typically out of reach, see \cite{AlfBerRao_17,AlfCov_12,AlfCovRao_12,BerNadPerRyz_09,FayHol_15}.
For such a reason, we are mainly interested in the spreading properties
of $E_{\mu}^{n}(t)$ for \textit{small} values $\mu$.

Let us recall that under Assumption \ref{assu:func_r_k_n0}, if $\lambda_{0}<0$
and if $n_{0}\not\equiv0$ has compact support, then the population
survives and the solution propagates at speed $c^{\ast}=2\sqrt{\frac{-\lambda_{0}}{1+B^{2}}}$,
see \cite[Theorem 4.2]{AlfBerRao_17}. Our first result shows that
there is acceleration when, instead of being compactly supported,
the initial data admits a heavy tail in the $X$ direction, in the
sense given by Assumption \ref{assu:heavy_tail_v0}.
\begin{thm}
[2D initial heavy tail implies acceleration]\label{thm:gw_HT_accel}Assume
$\lambda_{0}<0$. Let $r,K,n_{0}$ satisfy Assumptions \ref{assu:func_r_k_n0}
and \ref{assu:heavy_tail_v0}. Let $n$ be any global nonnegative
solution of \eqref{eq:dim_2_xy}. Then there exists $\beta>0$ such
that for any $\mu\in(0,\beta)$, there holds 
\[
\frac{1}{t}\min E_{\mu}^{n}(t)\rightarrow+\infty,\qquad\text{as }t\rightarrow+\infty.
\]
\end{thm}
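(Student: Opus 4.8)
The strategy is to construct a subsolution (in the appropriate sense, since the comparison principle fails) that is itself accelerating, and then to transfer its acceleration to $n$ via the total population $N(t,x)=\int_{\mathbb R}n(t,x,y)\,dy$. First I would work in the rotated variables $(X,Y)$, where the reaction decouples into $\tilde r(Y)$ in \eqref{eq:dim_2_XY}. Using the Dirichlet eigenelements $\lambda_0^R,\Gamma_0^R$ on $(-R,R)$ from \eqref{eq:Eigen_R}, pick $R$ large enough that $\lambda_0^R<0$ (possible since $\lambda_0^R\searrow\lambda_0<0$). The key point is that the competition term is controlled: by the tail estimates of Lemma \ref{lem:n_estimates} together with the $L^\infty$ bound $K\le k_+$, the nonlocal term $\int K v\,dy'$ evaluated at a point with $|Y|\le R$ is bounded by some constant $M=M(R)$ uniformly in time (the mass concentrated near the optimal line is bounded). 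Hence $v$ is a supersolution of the \emph{linear} problem $\partial_t w-\partial_{XX}w-\partial_{YY}w=(\tilde r(Y)-M)w$ on the slab $\{|Y|<R\}$ — but this is too lossy to give acceleration. Instead I would run a bootstrap: once one knows $v$ does not go extinct (Hair-Trigger Effect, guaranteed by $\lambda_0<0$ and $n_0\not\equiv 0$), the relevant competition bound near the front is small, because ahead of the bulk the population is tiny.

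The cleaner route, and the one I would actually carry out, is to build a genuine subsolution supported in the slab $|Y|<R$. Look for $\underline v(t,X,Y)=u(t,X)\Gamma_0^R(Y)$ where $u$ solves a scalar Fisher-KPP-type equation $\partial_t u-\partial_{XX}u=(-\lambda_0^R)u(1-\theta u)$ for a suitable small $\theta>0$ chosen so that $\theta u$ absorbs the competition term $k_+\|\Gamma_0^R\|_{L^1}u$ on the support of $\underline v$. One checks directly, using \eqref{eq:Eigen_R}, that such $\underline v$ is a subsolution of \eqref{eq:dim_2_XY} as long as $\underline v\le v$ initially and $u$ stays below the carrying capacity $1/\theta$. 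For the initial ordering, Assumption \ref{assu:heavy_tail_v0} gives $v_0(X,Y)\ge \underline u_0(X)\mathbf 1_{[\sigma_-,\sigma_+]}(Y)$; since $[\sigma_-,\sigma_+]$ may not meet $(-R,R)$, I would first let the full nonlinear dynamics act for a short time $t_1>0$: by the strong maximum principle applied on a bounded box and the heavy-tailed spatial profile, at time $t_1$ the solution $v(t_1,\cdot,\cdot)$ is bounded below by $c\,\underline u_0(X-A)\mathbf 1_{[-R/2,R/2]}(Y)$ for some constants $c>0$, $A$ (a finite-speed-of-propagation / parabolic-regularity argument); crucially this lower bound still has a 1D heavy tail in $X$, because $\underline u_0$ does and heavy-tailedness is preserved under spatial convolution with the heat kernel. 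Then I would place $\underline v$ below $v(t_1,\cdot,\cdot)$ and let it evolve.

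With the scalar subsolution $u$ in hand, I would invoke the Hamel–Roques acceleration result \cite{HamRoq_10} for the scalar KPP equation: a heavy-tailed initial datum forces every level set of $u$ to move superlinearly, i.e. $\frac{1}{t}\min\{X: u(t,X)=\eta\}\to+\infty$ for each small $\eta>0$. Integrating $\underline v\le v$ in $Y$ over $[-R,R]$ gives $N(t,x)\ge \|\Gamma_0^R\|_{L^1}\,u(t,X)$ at the corresponding $X$ (up to the fixed rotation, which only rescales $x$ by $\sqrt{1+B^2}$ and shifts by a bounded amount depending on the $Y$-slab), so choosing $\beta=\|\Gamma_0^R\|_{L^1}\cdot(1/\theta)\cdot(\text{const})$ and $\mu<\beta$ we get $\min E_\mu^n(t)\ge$ (a superlinear function of $t$), which is exactly $\frac1t\min E_\mu^n(t)\to+\infty$. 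The main obstacle is the first nonlinear step: producing, from the possibly ill-placed slab $[\sigma_-,\sigma_+]$, a lower bound at time $t_1$ that simultaneously (i) lives in a slab on which $\lambda_0^R<0$, and (ii) retains a heavy tail in $X$. Item (ii) requires care because one must avoid cutting off the tail when passing through the bounded box; I would handle it by comparing, on a large but fixed spatial window and the slab $|Y|<R$, with a linear Dirichlet problem whose principal eigenvalue is negative, propagating the heavy tail region-by-region, or equivalently by a direct heat-kernel lower bound exploiting that $e^{-\kappa_0|Y|}$-type weights from \eqref{eq:borne_n0} keep the relevant integrals finite.
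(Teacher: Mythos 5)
Your proposal is essentially correct and shares the outer architecture of the paper's argument: rotate to $(X,Y)$, take $R$ with $\lambda_0^R<0$, build a first short-time subsolution to move the mass into the slab $\{|Y|<R\}$, then a second subsolution of separated form $u(t,X)\Gamma_0^R(Y)$ with $u$ solving a scalar KPP equation, with the nonlocal term controlled by the Harnack refinement and the tail bound \eqref{eq:maj_n}, and finally a transfer from level sets of $u$ to $E_\mu^n(t)$. The genuine difference is in the acceleration mechanism: you invoke the Hamel--Roques acceleration theorem for the scalar KPP equation directly on a heavy-tailed $u(1,\cdot)$, whereas the paper avoids it here. Instead, for each $c>2\sqrt{-\lambda_0^R}$ the paper picks $\alpha_c$ with $c=\alpha_c-\lambda_0^R/\alpha_c$, uses the heavy-tail property only to guarantee $\underline u_0(X)\geq\min(m,e^{-\alpha_c X})$, modifies it (Lemma~\ref{lem:gw_borne_inf}) to an initial profile that is exactly $e^{-\alpha_c X}$ at $+\infty$, and then appeals to the classical Uchiyama front-convergence result to get a front of speed $c$ under the separated subsolution; since $c$ is arbitrary, acceleration follows. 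Both routes are valid; yours is slightly slicker but rests on the black box of Hamel--Roques (in its heavy-tail form, not the $(Q)$-form of Lemma~\ref{lem:HR_gen}), while the paper's uses only wave convergence, which is why its first step can also afford to replace $\underline u_0$ by a nicer truncated profile and reuse the machinery from the proof of Theorem~\ref{thm:gw_main}. One place where your plan is under-specified is the short-time step: producing a lower bound on $v(t_1,\cdot,\cdot)$ in the slab $[-R/2,R/2]$ with a retained heavy tail in $X$ is not a ``finite-speed / parabolic regularity'' fact (there is none here); the clean way, which the paper does, is to write down the explicit subsolution $e^{-kt}\underline{\underline u}_0(X)p(t,Y)$ of \eqref{eq:___subsol_1} on the widened slab $[\sigma_--\alpha,\sigma_++\alpha]\supset[-R,R]$ and compare by the maximum principle, after which $v(1,\cdot,\cdot)\geq\rho\,\underline{\underline u}_0(X)\Gamma_0^R(Y)$ follows with the heavy tail preserved by construction.
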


In other words, if the initial data is greater than or equal to a front-like
function with a heavy tail in the direction $X\rightarrow+\infty$,
the solution is accelerating. We will only give a sketch of the proof
in subsection \ref{subsec:HT_accel_proof}, as it is similar to the
proof of Theorem \ref{thm:gw_main} below. 

We now state our main result, namely Theorem \ref{thm:gw_main}, which
is an accurate estimate of the position of the accelerating level
sets under Assumption \ref{assu:encad_v0}. In the rest of this article,
for any function $f\colon\mathbb{R}\rightarrow\mathbb{R}$, we denote
$f^{-1}(a)$ the set $\{x\in\mathbb{R}\mid f(x)=a\}$. 
\begin{thm}
[Asymptotic position of the accelerating level sets]\label{thm:gw_main}Assume
$\lambda_{0}<0$. Let $r,K,n_{0}$ satisfy Assumptions \ref{assu:func_r_k_n0}
and \ref{assu:encad_v0}. Let $R>0$ be large enough such that $\lambda_{0}^{R}<0$
(see subsection \ref{subsec:Eigen_intro}). Let $n$ be any global
nonnegative solution of \eqref{eq:dim_2_xy}.

Then there exists $\beta>0$ so that for any $\mu\in(0,\beta)$, $\varepsilon\in(0,-\lambda_{0}^{R})$,
$\Gamma>0$ and $\gamma>0$, there exists $T^{\ast}=T_{\mu,\varepsilon,\gamma,\Gamma,R}^{\ast}\geq0$
such that for all $t\geq T^{\ast}$, the set $E_{\mu}^{n}(t)$ is
nonempty, compact, and satisfies
\begin{equation}
E_{\mu}^{n}(t)\subset\frac{1}{\sqrt{1+B^{2}}}\left[\min\underline{u}_{0}^{-1}\left(\Gamma e^{-(-\lambda_{0}^{R}-\varepsilon)t}\right),\max\overline{u}_{0}^{-1}\left(\gamma e^{-(-\lambda_{0}+\varepsilon)t}\right)\right].\label{eq:lvl_set_estimate}
\end{equation}
\end{thm}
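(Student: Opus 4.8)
The plan is to prove upper and lower bounds for $E_\mu^n(t)$ separately by constructing, in the rotated variables $(X,Y)$, a supersolution and a (generalized) subsolution that are each products of a one-dimensional profile in $X$ solving a scalar Fisher--KPP-type equation and an eigenfunction in $Y$, then invoking the sharp level-set estimates of Hamel--Roques \cite{HamRoq_10} for the scalar equation. Throughout I work with \eqref{eq:dim_2_XY} and the total population $N(t,x)=\int_\mathbb{R} n(t,x,y)\,dy$, noting that $\int_\mathbb{R} n(t,x,y)\,dy = \sqrt{1+B^2}\int_\mathbb{R} v(t,X,Y)\,dY$ along the line $\{x+By = \sqrt{1+B^2}\,X\}$, so the level sets of $N$ in $x$ translate into level sets of $\int_\mathbb{R} v\,dY$ in $X$ up to the factor $1/\sqrt{1+B^2}$ appearing in \eqref{eq:lvl_set_estimate}.

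For the \textbf{upper bound}, I would use the tail estimates of Lemma \ref{lem:n_estimates} together with the fact that $\tilde r(Y)-\int K v\,dy' \le \tilde r(Y)$, so $v$ is a subsolution of the \emph{linear} operator $\partial_t v - \partial_{XX}v - \partial_{YY}v - \tilde r(Y)v$. Since the initial data satisfies $v_0(X,Y)\le \overline u_0(X)\Gamma_0(Y)$ with $\overline u_0$ of type $(Q)$, I compare $v$ with $\overline w(t,X)\Gamma_0(Y)$, where $\overline w$ solves the scalar linear equation $\partial_t \overline w - \partial_{XX}\overline w = -\lambda_0 \overline w$ with datum $\overline u_0$; because $-\partial_{YY}\Gamma_0 - \tilde r\Gamma_0 = \lambda_0\Gamma_0$, the product is an exact supersolution, and the maximum principle for the linear problem gives $v(t,X,Y)\le \overline w(t,X)\Gamma_0(Y)$. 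Integrating in $Y$ (using $\int_\mathbb{R}\Gamma_0<\infty$, which follows from the exponential decay of $\Gamma_0$ noted in the excerpt) and applying the Hamel--Roques estimate for the linear equation $\partial_t\overline w = \partial_{XX}\overline w - \lambda_0\overline w$, whose level sets at height $\sim\gamma e^{-(-\lambda_0+\varepsilon)t}$ sit at $X \approx \max \overline u_0^{-1}(\gamma e^{-(-\lambda_0+\varepsilon)t})$, yields the right endpoint of \eqref{eq:lvl_set_estimate}. The parameters $\gamma$ and $\varepsilon$ absorb the constant $\int\Gamma_0$ and the error terms from the $(Q)$-analysis of \cite{HamRoq_10}.

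For the \textbf{lower bound}, the idea is to show that behind the constructed supersolution profile the total population is at least $\mu$; this is where the nonlocal competition term forces care, since \eqref{eq:dim_2_xy} has no comparison principle. The plan is to work on the truncated trait interval $(-R,R)$ with Dirichlet eigenpair $(\lambda_0^R,\Gamma_0^R)$, $\lambda_0^R<0$: because $K\le k_+$, the function $v$ restricted to this strip dominates (at least for small amplitudes) a solution of $\partial_t \underline v - \partial_{XX}\underline v - \partial_{YY}\underline v = (\tilde r(Y) - k_+ \Lambda(t,X))\underline v$, and seeking $\underline v = \underline w(t,X)\Gamma_0^R(Y)$ with $\Lambda = \int \underline v\,dY$ reduces matters to a scalar \emph{nonlocal} Fisher--KPP equation $\partial_t \underline w - \partial_{XX}\underline w = -\lambda_0^R \underline w - k_+(\int\Gamma_0^R)\underline w^2$ with heavy-tailed datum $\underline u_0$ (times $\Gamma_0^R$ evaluated on $[\sigma_-,\sigma_+]$, which is a positive constant once $R$ is large enough that $[\sigma_-,\sigma_+]\subset(-R,R)$). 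I would then either quote or adapt the Hamel--Roques lower estimate --- its upper-bound/lower-bound machinery in \cite{HamRoq_10} handles exactly such a logistic scalar equation --- to get that $\underline w(t,X)$ stays above $\Gamma e^{-(-\lambda_0^R-\varepsilon)t}$ for $X$ up to $\min \underline u_0^{-1}(\Gamma e^{-(-\lambda_0^R-\varepsilon)t})$, hence $\int v\,dY \ge \mu$ there, giving the left endpoint. The constant $\beta$ is chosen so that all small $\mu<\beta$ fall in the regime where these sub/supersolution constructions and the $(Q)$-estimates are valid, and compactness plus nonemptiness of $E_\mu^n(t)$ for large $t$ follows by combining the lower bound (the set is reached) with the upper bound together with the decay in $Y$ (the set is bounded).

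The main obstacle is the lower bound: one must rigorously justify that the nonlocal term does not destroy the subsolution property, which requires an \emph{a priori} bound showing the subsolution profile $\underline v$ stays small enough (say below a constant times $\Gamma_0^R$) on the region of interest so that $k_+\int v\,dY$ is genuinely dominated by what the ansatz assumes; this is typically handled by a bootstrap on a moving region where one already controls $\int v\,dY$ from the upper bound, or by first establishing a crude a priori $L^\infty$ bound on $v$ uniform in $(t,X)$ (which the confining structure of $r$ provides) and noting that on the far tail $\underline w$ is exponentially small so the $\underline w^2$ term is a genuine lower-order perturbation. A secondary technical point is that $\Gamma_0$ only solves \eqref{eq:Eigen_1D} in $H^2_{loc}$, so the product supersolution argument must be phrased via the truncated problems and a limiting argument, exactly as the monotonicity $\lambda_0^R\searrow\lambda_0$ in Proposition \ref{prop:monotony_eig} is set up to allow.
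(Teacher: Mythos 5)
Your upper-bound argument is essentially the paper's: compare $v$ with the product $\phi(t,X)\Gamma_0(Y)$ where $\phi$ solves the \emph{linear} equation $\partial_t\phi-\partial_{XX}\phi=-\lambda_0\phi$ with datum $\overline{u}_0$, use the maximum principle (valid here since dropping the nonlocal term makes $v$ a subsolution of the local linear operator), integrate in $Y$ with the tail control of Lemma~\ref{lem:n_estimates}, and conclude via the linear analogue of Lemma~\ref{lem:HR_gen}. That part is fine.

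Your lower-bound argument has two genuine gaps. First, you never explain how to actually bound the nonlocal term $\int K(t,\chi,\psi,y')\,v(t,\chi,\psi)\,dy'$ appearing in the equation for $v$: since $v$ is unknown and sits \emph{above} the putative subsolution, this integral cannot be replaced by $k_+\int\underline v\,dY$ or turned into a local $\underline w^2$ term --- that would require $v\le\underline v$, the opposite of what is being proved. The crude uniform bound $\int v\,dy'\le N_\infty$ only gives a constant shift $-\lambda_0^R-k_+N_\infty$ of the linear growth rate, which is far off the claimed sharp rate $-\lambda_0^R$. The missing idea is the refined parabolic Harnack inequality (Theorem~\ref{thm:Harnack_gen}): one argues by contradiction at the first contact time $t_0$ between $v$ and the candidate subsolution $\underline w=\rho u\,\Gamma_0^R$, and there Harnack yields $\int v\,dy'\le 2MC_M\,v(t_0,X_0,Y_0)+\delta$, i.e.\ a bound of the nonlocal term by a small multiple of $\underline w$ itself. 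Only then does taking $\rho$ small make the nonlocal contribution a genuinely lower-order perturbation and close the argument at rate $-\lambda_0^R-\varepsilon$. Your proposed fixes (crude $L^\infty$ bound; ``$\underline w^2$ is exponentially small'') do not accomplish this.

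Second, you claim that requiring $[\sigma_-,\sigma_+]\subset(-R,R)$ suffices to restrict to the strip $(-R,R)$. That is backwards: Assumption~\ref{assu:encad_v0} only gives $v_0\ge\underline u_0(X)\boldsymbol 1_{[\sigma_-,\sigma_+]}(Y)$, which vanishes for $Y\in(-R,R)\setminus[\sigma_-,\sigma_+]$, so $v_0$ need not dominate anything proportional to $\Gamma_0^R$ on all of $[-R,R]$ (and $[\sigma_-,\sigma_+]$ need not even contain $0$). The paper fixes this with a preliminary subsolution on $t\in[0,1]$ of the form $e^{-kt}\underline{\underline u}_0(X)\,p(t,Y)$, with $p$ the heat flow of a bump supported in $(\sigma_-,\sigma_+)$ subject to Dirichlet conditions on a larger interval $(\sigma_--\alpha,\sigma_++\alpha)\supset[-R,R]$; parabolic smoothing then gives $v(1,X,Y)\ge\rho\,\underline{\underline u}_0(X)\,\Gamma_0^R(Y)$ on $[-R,R]$, after which the accelerating subsolution is launched for $t\ge1$. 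Without this step your lower bound does not get off the ground in the generic case.
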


Let us make some comments on this theorem. Observe first that for
$t$ large enough there holds 
\begin{align*}
 & \Gamma e^{-(-\lambda_{0}^{R}-\varepsilon)t}\in\left(0,\liminf_{-\infty}\underline{u}_{0}\right),\\
 & \gamma e^{-(-\lambda_{0}+\varepsilon)t}\in\left(0,\liminf_{-\infty}\overline{u}_{0}\right),
\end{align*}
thus the sets $\underline{u}_{0}^{-1}\left(\Gamma e^{-(-\lambda_{0}^{R}-\varepsilon)t}\right)$
and $\overline{u}_{0}^{-1}\left(\gamma e^{-(-\lambda_{0}+\varepsilon)t}\right)$
are non-empty and bounded. Additionally, Assumption \ref{assu:encad_v0}
implies that $\underline{u}_{0}\leq C\overline{u}_{0}$ with $C=\min_{(\sigma_{-},\sigma_{+})}\Gamma_{0}>0$.
In conjunction with $\lambda_{0}<\lambda_{0}^{R}$, it follows that
for $t$ possibly even larger there holds 
\[
\min\underline{u}_{0}^{-1}\left(\Gamma e^{-(-\lambda_{0}^{R}-\varepsilon)t}\right)<\max\overline{u}_{0}^{-1}\left(\gamma e^{-(-\lambda_{0}+\varepsilon)t}\right),
\]
giving a meaning to \eqref{eq:lvl_set_estimate}. 

Next, notice that, given any two values $\mu$ and $\mu^{\prime}$
in $(0,\beta)$, both level sets $E_{\mu}^{n}(t)$ and $E_{\mu^{\prime}}^{n}(t)$
are included in the same interval given by expression \eqref{eq:lvl_set_estimate}.
As a consequence, Theorem \ref{thm:gw_main} implies that for any
$\varepsilon\in(0,-\lambda_{0}^{R})$ and positive real numbers $\gamma$
and $\Gamma$, there holds
\[
\liminf_{t\rightarrow+\infty}\inf_{x\leq(1+B^{2})^{-1/2}\min\underline{u}_{0}^{-1}\left(\Gamma e^{-(-\lambda_{0}^{R}-\varepsilon)t}\right)}\int_{\mathbb{R}}n(t,x,y)dy\geq\beta,
\]
\[
\lim_{t\rightarrow+\infty}\sup_{x\geq(1+B^{2})^{-1/2}\max\overline{u}_{0}^{-1}\left(\gamma e^{-(-\lambda_{0}+\varepsilon)t}\right)}\int_{\mathbb{R}}n(t,x,y)dy=0.
\]
The upper bound of $E_{\mu}^{n}(t)$ in \eqref{eq:lvl_set_estimate}
is valid for all levels $\mu$, and only requires the upper bound
of $v_{0}$ in Assumption \ref{assu:encad_v0}. However, the lower
bound of $E_{\mu}^{n}(t)$ is valid for levels $\mu<\beta$, and only
requires the lower bound of $v_{0}$ in Assumption \ref{assu:encad_v0}.
Also note that the lower bound in \eqref{eq:lvl_set_estimate} leads
to $\frac{1}{t}\min E_{\mu}^{n}(t)\rightarrow+\infty$ when $t\rightarrow+\infty$,
thus we recover the acceleration.\\

We now give a sketch of the proof. The upper bound is much easier
to prove since the nonlocal term is nonnegative. One constructs a
supersolution of the form $\phi(t,X)\Gamma_{0}(Y)$ where $\phi$
satisfies $\partial_{t}\phi-\partial_{xx}\phi=(-\lambda_{0})\phi$
with $\phi(0,\cdot)$ displaying a heavy tail. The upper bound of
Lemma  \ref{lem:HR_gen} is still valid in this case, which leads
to the result with an adequate control of the tails of $n$. 

The proof of the lower bound is much more involved. Suppose first
that $[-R,R]\subset[\sigma_{-},\sigma_{+}]$. Then, after bounding
the nonlocal term with a refinement of a Harnack inequality, we construct
a subsolution of the form $\underline{w}(t,X,Y)=u(t,X)\Gamma_{0}^{R}(Y)$
where $u$ satisfies the Fisher-KPP equation. Therefore applying Lemma
\ref{lem:HR_gen} allows us to conclude. Note that this might not
be a subsolution if $R$ were too small, leading to $\lambda_{0}^{R}$
being possibly nonnegative. In the general case we may have $[-R,R]\not\subset[\sigma_{-},\sigma_{+}]$. In that event we construct a subsolution $\underline{v}(t,X,Y)$ for $t\in [0,1]$, such that $\underline{v}(1,X,Y)\geq\rho\underline{u}_{0}(X)\Gamma_{0}^{R}(Y)$
on $\mathbb{R}\times[-R,R]$ for some $\rho>0$. Then on $[1,+\infty)$
we consider a subsolution of the same form as $\underline{w}$,
which gives the result.

In particular, to prove acceleration under the hypothesis $v_{0}(X,Y)\geq\underline{u}_{0}(X)\boldsymbol{1}_{[\sigma_{-},\sigma_{+}]}(Y)$,
we have to use $\Gamma_{0}^{R}$ instead of $\Gamma_{0}$ in order
to construct the subsolution. Because of this, we obtain $-\lambda_{0}^{R}$
in the lower bound of \eqref{eq:lvl_set_estimate}. Had we supposed
the stronger hypothesis $v_{0}(X,Y)\geq\underline{u}_{0}(X)\Gamma_{0}(Y)$
instead, we could replace $-\lambda_{0}^{R}$ with $-\lambda_{0}$
and take any $\varepsilon\in(0,-\lambda_{0})$. Let us also mention
that $\beta$ tends to zero as $R\rightarrow+\infty$, leading to
a trade-off. Indeed, a large value of $R$ provides a more precise
location of the level sets, but also reduces the range of level sets
being located. \\

We conclude this section by applying Theorem \ref{thm:gw_main} in
the cases where the functions $\underline{u}_{0}$ and $\overline{u}_{0}$
are of the forms \eqref{eq:light_heavy_tail}-\eqref{eq:heavy_heavy_tail}.
For simplicity, we only consider the lower bound. 
\begin{example}
Suppose there exist $X_{0},b>0$ and $a\in(0,1)$ such that $\overline{u}_{0}(X)=Ce^{-bx^{a}}$
on $[X_{0},+\infty)$. Then if we select $\Gamma=C$, the lower bound
in \eqref{eq:lvl_set_estimate} becomes
\[
\min E_{\mu}^{n}(t)\geq\frac{1}{\sqrt{1+B^{2}}}\left(\frac{1}{b}(-\lambda_{0}^{R}-\varepsilon)t\right)^{1/a},
\]
meaning the total population spreads with at least algebraic, superlinear
speed.
\end{example}

\begin{example}
Suppose there exist $X_{0},C,a>0$ such that $\underline{u}_{0}(X)=CX^{-a}$
on $[X_{0},+\infty)$. Then\textbf{ }if we select $\Gamma=C$, the
lower bound in \eqref{eq:lvl_set_estimate} becomes
\[
\min E_{\mu}^{n}(t)\geq\frac{1}{\sqrt{1+B^{2}}}\exp\left(\frac{1}{a}(-\lambda_{0}^{R}-\varepsilon)t\right),
\]
thus the total population spreads with at least exponential speed.
\end{example}

\begin{example}
Suppose there exist $X_{0}>1$ and $C,a>0$ such that $\overline{u}_{0}(X)=C(\ln x)^{-a}$
on $[X_{0},+\infty)$. Then if we select $\Gamma=C$, the lower bound
in \eqref{eq:lvl_set_estimate} becomes
\[
\min E_{\mu}^{n}(t)\geq\frac{1}{\sqrt{1+B^{2}}}\exp\left(\exp\left(\frac{1}{a}(-\lambda_{0}^{R}-\varepsilon)t\right)\right),
\]
that is the total population spreads with at least superexponential
speed.
\end{example}

\subsection{When the heavy tail is ill-directed\label{subsec:Bad_HT_results}}

When the initial data admits a heavy tail in direction $X\rightarrow+\infty$,
in the sense of Assumption \ref{assu:heavy_tail_v0}, Theorem \ref{thm:gw_HT_accel}
proves the acceleration of the propagation. It is worth wondering
if acceleration still occurs when considering heavy tail initial condition
in a different direction than $X\rightarrow+\infty$. For the sake
of clarity, we only consider the direction $x\rightarrow+\infty$,
but the proof is easily adapted to any direction 
\[
X^{\prime}=\frac{x+B^{\prime}y}{\sqrt{1+B^{\prime2}}}\rightarrow+\infty,\quad\text{with }B^{\prime}\neq B.
\]

\begin{thm}
[Ill-directed heavy tail prevents acceleration]\label{thm:gw_HTx}Suppose
$\lambda_{0}<0$. Suppose $r,K$ satisfy Assumption \ref{assu:func_r_k_n0}.
Suppose $n_{0}$ satisfies
\begin{equation}
0\leq n_{0}(x,y)\leq u_{0}(x)\boldsymbol{1}_{[\sigma_{-},\sigma_{+}]}(y),\label{eq:borne_n0_HTx}
\end{equation}
where $u_{0}\in L^{\infty}(\mathbb{R})$ and $\sigma_{-}<\sigma_{+}$.
Let $n$ be any global nonnegative solution of \eqref{eq:dim_2_xy}.

Then if we define $c^{\ast}\coloneqq2\sqrt{\frac{-\lambda_{0}}{1+B^{2}}}$,
there holds
\begin{equation}
\limsup_{t\rightarrow+\infty}\int_{\mathbb{R}}n(t,ct,y)dy=0,\qquad\forall|c|>c^{\ast}.\label{eq:HTx_vit_c*}
\end{equation}
\end{thm}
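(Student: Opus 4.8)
The plan is to prove Theorem~\ref{thm:gw_HTx} by a supersolution argument that is entirely analogous to the easy (upper bound) direction of Theorem~\ref{thm:gw_main}, but now performed in the original $(t,x)$ variables since the ill-directed tail lives along $x\to+\infty$ rather than $X\to+\infty$. First I would observe that the nonlocal competition term is nonnegative (since $K\geq k_->0$ and $n\geq0$), hence $n$ is a subsolution of the \emph{linear} parabolic operator $\partial_t n-\partial_{xx}n-\partial_{yy}n-r(y-Bx)n$. Therefore it suffices to construct a supersolution $\overline n(t,x,y)$ of this linear equation with $\overline n(0,\cdot,\cdot)\geq n_0$, and to show that the $y$-integral of $\overline n$ along the ray $x=ct$ vanishes as $t\to+\infty$ for every $|c|>c^\ast$.

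The natural ansatz is to separate variables using the principal eigenfunction $\Gamma_0$ of the $Y$-problem: set
\[
\overline n(t,x,y)=\phi(t,X)\,\Gamma_0(Y),\qquad X=\frac{x+By}{\sqrt{1+B^2}},\quad Y=\frac{y-Bx}{\sqrt{1+B^2}}.
\]
In the rotated variables this reduces, by \eqref{eq:Eigen_1D}, to requiring $\partial_t\phi-\partial_{XX}\phi=(-\lambda_0)\phi$, i.e. $\phi$ solves the linear heat equation with a zero-order term $-\lambda_0>0$. For the initial datum, \eqref{eq:borne_n0_HTx} gives $n_0(x,y)\leq \|u_0\|_\infty \boldsymbol 1_{[\sigma_-,\sigma_+]}(y)$, and since $\Gamma_0$ is continuous and positive it is bounded below by a positive constant on the compact set of $Y$-values corresponding to $(x,y)$ with $y\in[\sigma_-,\sigma_+]$ and $x$ in any bounded region — but because the support in $y$ is bounded while $x$ ranges over all of $\mathbb R$, the $Y$-variable is unbounded, so one cannot simply dominate $\boldsymbol 1_{[\sigma_-,\sigma_+]}(y)$ by a constant times $\Gamma_0(Y)$ on all of $\mathbb R^2$. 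This is the main obstacle, and it is exactly why the statement is about $n$ along rays rather than a pointwise bound. The remedy is to use the decay estimate on $n$ in the direction $|y-Bx|\to\infty$ provided by Lemma~\ref{lem:n_estimates} (which follows from \eqref{eq:borne_n0}): for fixed large $t$ and $x=ct$, the mass $\int_\mathbb{R} n(t,ct,y)\,dy$ is controlled by the mass in a strip $|y-Bct|\leq \delta t$ (for suitable $\delta$) up to an exponentially small error, and on that strip $\Gamma_0(Y)$ is bounded below by $e^{-C\delta t /\sqrt{1+B^2}}$ or so (using at-least-exponential decay of $\Gamma_0$), while $\phi$ can be estimated from above.

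Concretely, I would take $\phi(0,X)$ to be a bounded function that dominates the rotated initial profile on the relevant region and is compactly supported (or exponentially decaying), so that by the explicit Gaussian representation of solutions to $\partial_t\phi-\partial_{XX}\phi=(-\lambda_0)\phi$ one has $\phi(t,X)\leq C\, t^{-1/2} e^{(-\lambda_0)t - X^2/(4t)}\,(1+\dots)$. Along $x=ct$ one has $X=\frac{ct+By}{\sqrt{1+B^2}}$; after integrating $\overline n(t,ct,y)\Gamma_0$-weighted quantities in $y$ and absorbing the $\Gamma_0$ factors against the Gaussian in $X$, the exponent behaves like $(-\lambda_0)t - \frac{c^2 t^2}{4(1+B^2)t}(1+o(1)) = \big(-\lambda_0 - \frac{c^2}{4(1+B^2)}\big)t + o(t)$, which is strictly negative precisely when $c^2>4(-\lambda_0)/(1+B^2)$, i.e. $|c|>c^\ast$. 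Hence $\int_\mathbb R n(t,ct,y)\,dy\to0$, giving \eqref{eq:HTx_vit_c*}. The steps in order are: (i) reduce to the linear equation via nonnegativity of the nonlocal term; (ii) build the separated supersolution $\phi(t,X)\Gamma_0(Y)$ with $\phi$ solving the shifted heat equation; (iii) dominate $n_0$ by the supersolution's initial datum, handling the non-integrable $y$-direction by splitting off a strip $|y-Bx|\le\delta t$ using Lemma~\ref{lem:n_estimates}; (iv) insert the Gaussian upper bound for $\phi$, evaluate along $x=ct$, and check the exponent is negative iff $|c|>c^\ast$, then let $\delta\to0$. The delicate point throughout is step (iii): reconciling the compactly-supported-in-$y$ initial condition with a supersolution that is forced, by the rotated geometry, to ``see'' arbitrarily large $Y$, which is resolved by trading a small loss $\delta$ in the exponent against the tail estimate and then optimizing.
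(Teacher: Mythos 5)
You correctly locate the central difficulty: because the initial data is compactly supported in $y$ but ranges over all $x$, its support is a diagonal strip in the $(X,Y)$ plane along which $Y\to-\infty$ as $X\to+\infty$, so no separated supersolution $\phi(0,X)\Gamma_0(Y)$ with bounded or decaying $\phi(0,\cdot)$ can dominate $n_0$. Concretely, on the line $y=\sigma_+$ one has $Y=\sigma_+\sqrt{1+B^2}-BX$, so if, say, $u_0\equiv 1$, the required $\phi(0,X)\geq 1/\Gamma_0(Y)$ would force $\phi(0,X)$ to grow at least exponentially in $X$, incompatible with your compactly-supported/Gaussian-kernel ansatz. Unfortunately, the remedy you propose does not close this gap, for two reasons.

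First, you invoke Lemma~\ref{lem:n_estimates} to get the tail control $n\leq Ce^{-\kappa|y-Bx|}$, but that lemma requires $n_0$ to satisfy \eqref{eq:borne_n0}, and under the hypotheses of Theorem~\ref{thm:gw_HTx} this condition fails: $u_0$ is only bounded, so for $y\in[\sigma_-,\sigma_+]$ and $|x|$ large, $n_0$ need not decay as $|y-Bx|\to\infty$. This is precisely why the paper proves a separate Lemma~\ref{lem:HTx_tails}, whose conclusion is the weaker bound $n\leq C'e^{-\kappa|y-Bx|}+e^{-\alpha t}u(t,x)p(t,y)$ carrying an unavoidable time-dependent correction. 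Second, and more fundamentally, even granting a tail estimate you still have no comparison $n\leq\phi\,\Gamma_0$ to plug it into: splitting the $y$-integral at time $t$ does nothing to establish the $t=0$ domination needed for the maximum principle, which is where the obstacle lives. The paper's route is genuinely different: it builds a \emph{two-term} supersolution $\psi+\varphi$ where $\psi(t,x,y)=Ce^{-\gamma(X-st)}\Gamma_0(Y)$ is a traveling-exponential supersolution (no heat kernel, $\gamma=\sqrt{-\lambda_0}$, $s>s^*$) controlling the spreading speed, while $\varphi(t,x,y)=e^{-\alpha t}u(t,x)p(t,y)$ is a product in the \emph{unrotated} variables (with $p$ solving the heat equation from $\mathbf{1}_{[\sigma_-,\sigma_+]}$) that dominates $n_0$ at $t=0$ and decays in time; verifying $\mathcal L(\psi+\varphi)\geq 0$ then requires a careful case split ($\Omega_0^c$, $\Omega_-$, $\Omega_+$) trading the exponential growth of $1/\Gamma_0$ against the Gaussian decay of $p$, which is exactly the point your single-ansatz argument cannot reach.
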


Notice that $u_{0}$ appearing in \eqref{eq:borne_n0_HTx} is only
assumed to be bounded. In particular, even if $u_{0}\equiv\text{cst}>0$,
a much stronger assumption than a heavy tail, acceleration does not
occur because of ill-orientation. 

Before going further, let us mention that \cite[Theorem 4.2]{AlfBerRao_17}
shows that, when $r,K$ satisfy Assumption \ref{assu:func_r_k_n0}
and $n_{0}\not\equiv0$ is compactly supported, the spreading speed
of the population is exactly $c^{\ast}$, in the sense that
\begin{align}
\limsup_{t\rightarrow+\infty}\int_{\mathbb{R}}n(t,ct,y)dy=0, & \qquad\forall|c|>c^{\ast},\label{eq:speed_leq_c*}\\
\liminf_{t\rightarrow+\infty}\int_{\mathbb{R}}n(t,ct,y)dy\geq\beta, & \qquad\forall|c|<c^{\ast},\label{eq:speed_geq_c*}
\end{align}
for some $\beta>0$ that may depend on $c$ when $|c|\rightarrow c^{\ast}$.

A consequence of Theorem \ref{thm:gw_HTx} is that if $n_{0}\not\equiv0$
satisfies \eqref{eq:borne_n0_HTx}, the population spreads exactly
at speed $c^{\ast}$, in the sense given by \eqref{eq:speed_leq_c*}-\eqref{eq:speed_geq_c*}.
To prove that \eqref{eq:speed_geq_c*} holds, one cannot invoke the
comparison principle because of the nonlocal term in \eqref{eq:dim_2_xy}.
However, an essential element of the proof of Theorem \ref{thm:gw_HTx}
is the control of the tails \eqref{eq:HTx_n_tails}. Using it, one
can adapt the proof of \cite[Theorem 4.2]{AlfBerRao_17} to show that
\eqref{eq:speed_geq_c*} is valid.

\subparagraph{Outline of the paper.}

The rest of this article is organized as follows. In Section \ref{sec:Preliminaries}
we provide some materials necessary to the proof, that is an equivalent
of Theorem \ref{thm:gw_main} for the scalar Fisher-KPP equation,
some properties of functions satisfying $(Q)$, some principal eigenelements
of elliptic operators, some estimates on the tails of $n$ as well
as a refinement of the parabolic Harnack inequality. Section \ref{sec:Main_proof}
is devoted to the proof of Theorem \ref{thm:gw_main}, and presents
a sketch of the proof of Theorem \ref{thm:gw_HT_accel}. Finally,
Section \ref{sec:HTx} addresses the proof of Theorem \ref{thm:gw_HTx}.

\section{Preliminaries\label{sec:Preliminaries}}

\subsection{Acceleration in the scalar Fisher-KPP equation}

We consider here the Fisher-KPP equation with a logistic reaction
term~:

\begin{equation}
\begin{cases}
\partial_{t}u-\partial_{xx}u=\Lambda u(1-u), & t>0,\ x\in\mathbb{R},\\
u(0,x)=u_{0}(x), & x\in\mathbb{R},
\end{cases}\label{eq:Fisher-KPP}
\end{equation}
with $\Lambda>0$. The function $u_{0}\colon\mathbb{R}\rightarrow[0,1]$
is assumed to be uniformly continuous and asymptotically front-like,
in the sense of \eqref{eq:asymp_frontL}, and to display a (one-dimensional)
heavy tail in $+\infty$, in the sense of \eqref{eq:1D_heavy_tail}.
Under these assumptions, Hamel and Roques \cite{HamRoq_10} proved
that the level sets of $u$, defined for $\eta\in(0,1)$ by
\[
E_{\eta}(t)\coloneqq\left\{ x\in\mathbb{R}\mid u(t,x)=\eta\right\} ,
\]
propagate to the right by accelerating, that is $\min E_{\eta}(t)/t\rightarrow+\infty$
as $t\rightarrow+\infty$. Under assumption $(Q)$, they also provide
sharp estimates on the position of the level sets. This result, which
will be an essential tool for our analysis, reads as follows.
\begin{lem}
[See {\cite[Theorem 1.1]{HamRoq_10}}]\label{lem:HR_gen}Let $w$
satisfy $(Q)$, see Definition \ref{def:condition_Q}. Let $u(t,x)$
be the solution of \eqref{eq:Fisher-KPP} with initial condition $u_{0}\coloneqq w/||w||_{\infty}$.
Then for any $\eta\in(0,1)$, $\varepsilon\in(0,\Lambda)$, $\gamma>0$
and $\Gamma>0$, there exists $T=T_{\eta,\varepsilon,\gamma,\Gamma,\Lambda}\geq0$
so that 
\[
E_{\eta}(t)\subset w^{-1}\left(\left[\gamma e^{-(\Lambda+\varepsilon)t},\Gamma e^{-(\Lambda-\varepsilon)t}\right]\right),\qquad\forall t\geq T.
\]
\end{lem}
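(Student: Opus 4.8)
The plan is to adapt the proof of Hamel and Roques \cite{HamRoq_10}: control how far to the \emph{right} the level sets can go by a linearized supersolution, and how far to the \emph{left} by a two-stage subsolution, everything resting on the fact that, in the relevant space--time regime, convolution by the heat kernel distorts a $(Q)$-tail only by a factor $e^{o(t)}$. Throughout, normalize $u_{0}=w/\|w\|_{\infty}\in[0,1]$ (so $0\le u\le 1$ by comparison with constants) and write $G_{t}(z)=(4\pi t)^{-1/2}e^{-z^{2}/(4t)}$. First I would record what condition $(Q)$ gives: by the remark after Definition \ref{def:condition_Q} --- this is exactly where $w''=o(w)$ is used --- one has $(\ln w)'(x)\to 0$ as $x\to+\infty$; since moreover $\liminf_{-\infty}w>0$ and $w>0$ is continuous, $\inf_{(-\infty,\xi_{0}]}w>0$; and a short bootstrap from the heavy-tail property \eqref{eq:1D_heavy_tail} shows that, for fixed $\varepsilon\in(0,\Lambda)$ and $\gamma,\Gamma>0$, the sets $A^{+}(t):=\{x\mid w(x)<\gamma e^{-(\Lambda+\varepsilon)t}\}$ and $A^{-}(t):=\{x\mid w(x)>\Gamma e^{-(\Lambda-\varepsilon)t}\}$ are, for $t$ large, a right half-line $(X^{+}(t),+\infty)$ and a left half-line $(-\infty,X^{-}(t))$, with $w$ equal to the relevant threshold at $X^{\pm}(t)$ and $X^{\pm}(t)/t\to+\infty$. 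Since $w^{-1}([\gamma e^{-(\Lambda+\varepsilon)t},\Gamma e^{-(\Lambda-\varepsilon)t}])=\mathbb{R}\setminus(A^{+}(t)\cup A^{-}(t))$ for $t$ large, the claim reduces to proving $u(t,\cdot)<\eta$ on $A^{+}(t)$ and $u(t,\cdot)>\eta$ on $A^{-}(t)$ for $t$ large.

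For the right-hand bound, since $u\ge 0$ we have $\partial_{t}u-\partial_{xx}u=\Lambda u(1-u)\le\Lambda u$, hence $u(t,x)\le e^{\Lambda t}(G_{t}*u_{0})(x)$. The heart of the matter is the estimate $(G_{t}*w)(x)\le 3\,e^{\varepsilon t/2}w(x)$ for $x\in A^{+}(t)$, $t$ large: with $\theta:=\sqrt{\varepsilon/2}$ and $\xi_{\theta}$ such that $|w'|\le\theta w$ on $[\xi_{\theta},+\infty)$, split the convolution at $z=x-\xi_{\theta}$; on the inner range $w(x-z)\le w(x)e^{\theta z^{+}}$ (integrate $(\ln w)'$ for $z\ge 0$, use monotonicity for $z\le 0$), so that part is $\le w(x)\int_{\mathbb{R}}G_{t}(z)e^{\theta z^{+}}\,dz\le 2w(x)e^{\theta^{2}t}$, while the outer range is $\le\|w\|_{\infty}$ times a Gaussian tail $\le\|w\|_{\infty}e^{-x^{2}/(5t)}\le w(x)e^{\theta^{2}t}$, the last step because $\ln w(x)\ge -x$ for $x$ large (heavy tail) and $x\gtrsim t$ throughout $A^{+}(t)$. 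Therefore
\[
u(t,x)\le\frac{3}{\|w\|_{\infty}}e^{\Lambda t}e^{\varepsilon t/2}\gamma e^{-(\Lambda+\varepsilon)t}=\frac{3\gamma}{\|w\|_{\infty}}e^{-\varepsilon t/2},
\]
which is $<\eta$ once $t\ge T_{1}$. This half only uses $u\ge 0$ and $(\ln w)'\to 0$.

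For the left-hand bound I would build a two-stage subsolution, and this is where the real work lies. Fix $\varepsilon'\in(0,\varepsilon)$, $\Gamma'>0$ and $\delta:=\min\{\varepsilon'/(2\Lambda),\tfrac{1}{2},(1-\eta)/2\}$. \emph{Stage one}: with $\underline{u}(t,x):=e^{\Lambda(1-\delta)t}(G_{t}*\min\{u_{0},\delta\})(x)$, the Lipschitz function $\min\{\underline{u},\delta\}$ is a generalized subsolution of \eqref{eq:Fisher-KPP} lying below $u_{0}$ at $t=0$ --- where it is $<\delta$ it equals $\underline{u}$, which satisfies $\partial_{t}\underline{u}-\partial_{xx}\underline{u}=\Lambda(1-\delta)\underline{u}\le\Lambda\underline{u}(1-\underline{u})$, and elsewhere it is the subsolution $\delta$ --- hence $u\ge\min\{\underline{u},\delta\}$. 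For $x-\xi_{0}\ge\sqrt{t}$, monotonicity of $w$ on $[\xi_{0},x]$ gives $(G_{t}*\min\{u_{0},\delta\})(x)\ge c_{1}\min\{w(x)/\|w\|_{\infty},\delta\}$ for an absolute $c_{1}>0$, so on $\{x\mid w(x)\ge\Gamma'e^{-(\Lambda-\varepsilon')t}\}$ one gets $\underline{u}(t,x)\ge\frac{c_{1}\Gamma'}{\|w\|_{\infty}}e^{(\varepsilon'-\Lambda\delta)t}\to+\infty$; combined with the classical Aronson--Weinberger spreading \cite{AroWei_78} ($u(t,\cdot)\to 1$ uniformly on $(-\infty,\sqrt{\Lambda}\,t]$, using $\liminf_{-\infty}u_{0}>0$), this yields $u(t,\cdot)\ge\delta$ on $(-\infty,X'(t)]$ for $t\ge S$, where $w(X'(t))=\Gamma'e^{-(\Lambda-\varepsilon')t}$. \emph{Stage two}: choose the ODE time $\tau$ so that $z(\tau)>\eta$ for $z'=\Lambda z(1-z)$, $z(0)=\delta/2$, with associated spreading loss $C'=C'(\tau,\Lambda)$; comparison on $[t-\tau,t]$ against $\delta\,\mathbf{1}_{(-\infty,X'(t-\tau)]}$ gives $u(t,x)>\eta$ for $x\le X'(t-\tau)-C'$. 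Since $\varepsilon'<\varepsilon$, the superlinearity of $X^{-}$ together with $w(X^{-}(t)+C')\ge w(X^{-}(t))e^{-\theta C'}$ (again from $(\ln w)'\to 0$) forces $X'(t-\tau)-C'\ge X^{-}(t)$ for $t\ge T_{2}$, so $u(t,\cdot)>\eta$ on $A^{-}(t)$ for $t\ge T_{2}$.

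Taking $T=\max\{T_{1},T_{2}\}$ and combining the two exclusions gives, for $t\ge T$,
\[
E_{\eta}(t)\subset\mathbb{R}\setminus\bigl(A^{+}(t)\cup A^{-}(t)\bigr)=w^{-1}\bigl([\gamma e^{-(\Lambda+\varepsilon)t},\Gamma e^{-(\Lambda-\varepsilon)t}]\bigr),
\]
which is the assertion. I expect the main obstacle to be the bookkeeping in the left-hand bound: one must not merely exhibit a subsolution but locate its $\delta$-level set precisely enough --- the purpose of the auxiliary exponent $\varepsilon'<\varepsilon$ and the bounded push-up time $\tau$ --- so that at time $t$ the set $\{u>\eta\}$ actually overtakes the moving threshold $A^{-}(t)$, which grows super-linearly and hence far outruns the classical KPP speed $2\sqrt{\Lambda}$. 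It is precisely here, and in the Gaussian-smoothing estimate of the right-hand bound, that the slow-variation hypothesis $w''=o(w)$ of $(Q)$ (via $(\ln w)'\to 0$) is indispensable.
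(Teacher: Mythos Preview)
The paper does not prove this lemma: it is quoted verbatim from Hamel and Roques \cite[Theorem 1.1]{HamRoq_10} and used as a black box, so there is no ``paper's own proof'' to compare against. What you have written is a faithful reconstruction of the original Hamel--Roques argument --- linearized heat-kernel supersolution for the upper bound, a two-stage subsolution (exponential growth of a truncated heat flow, then a bounded ODE push-up time) for the lower bound --- and the strategy is correct.

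A few steps are sketched rather than proved. In the upper bound, the outer-range estimate $\|w\|_{\infty}e^{-x^{2}/(5t)}\le w(x)e^{\theta^{2}t}$ deserves one more line: it rests on $x\ge X^{+}(t)$ with $X^{+}(t)/t\to+\infty$, so that eventually $x^{2}/(5t)\ge x$, combined with the heavy-tail bound $w(x)\ge e^{-x}$ for large $x$. In stage two of the lower bound, the sentence ``the superlinearity of $X^{-}$ \ldots\ forces $X'(t-\tau)-C'\ge X^{-}(t)$'' is slightly misleading: superlinearity plays no role here; the inequality follows directly from $\varepsilon'<\varepsilon$, the monotonicity of $w$ on its tail, and the slow-variation bound $w(X^{-}(t)+C')\ge e^{-\theta C'}w(X^{-}(t))$, exactly as you indicate. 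The claim that $\min\{\underline{u},\delta\}$ is a generalized subsolution is standard (minimum of two classical subsolutions of a second-order parabolic equation), but strictly speaking requires a one-line justification. None of these are genuine gaps.
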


In the sequel, in order to prove Theorem \ref{thm:gw_main}, we shall
construct some sub- and super-solutions of the form $u(t,X)\Gamma_{0}(Y)$
where $u(t,X)$ solves \eqref{eq:Fisher-KPP} or a linear version
of \eqref{eq:Fisher-KPP}. Then, for the estimates on $E_{\eta}(t)$
provided by Lemma \ref{lem:HR_gen} to transfer to estimates on $E_{\mu}^{n}(t)=\left\{ x\in\mathbb{R}\mid\int_{\mathbb{R}}n(t,x,y)dy=\mu\right\} $,
we shall need a technical result which we now state.
\begin{prop}
\label{prop:ppties_Q}Let $w$ satisfy $(Q)$. Then there exists $\xi_{1}>\xi_{0}$
such that $w(x)>w(\xi_{1})$ for any $x<\xi_{1}$.

In addition, for any $0<a<b$, $\Gamma_{a}>0$, $\Gamma_{b}>0$ and
$\chi>0$, there exists $t^{\ast}\geq0$ such that
\begin{equation}
\min w^{-1}\left(\Gamma_{a}e^{-at}\right)+\chi\leq\min w^{-1}\left(\Gamma_{b}e^{-bt}\right),\qquad\forall t\geq t^{\ast},\label{eq:___min_Qprop}
\end{equation}
\begin{equation}
\max w^{-1}\left(\Gamma_{a}e^{-at}\right)+\chi\leq\max w^{-1}\left(\Gamma_{b}e^{-bt}\right),\qquad\forall t\geq t^{\ast}.\label{eq:___max_Qprop}
\end{equation}
\end{prop}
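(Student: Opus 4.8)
The plan is to exploit the structure of condition $(Q)$: the function $w$ is positive, tends to $0$ at $+\infty$, and is $C^2$, nonincreasing on $[\xi_0,+\infty)$ with $w''=o(w)$ (hence, as noted after Definition \ref{def:condition_Q}, also $w'=o(w)$ and $w$ has a heavy tail). For the first statement, I would argue as follows. Since $\lim_{+\infty}w=0$ and $\liminf_{-\infty}w>0$, the infimum of $w$ over $(-\infty,\xi_0]$ is a strictly positive number, call it $m$. Pick $\xi_1\ge\xi_0$ large enough that $w(\xi_1)<m$, which is possible because $w\to 0$; then shrink the candidate so that in addition $\xi_1$ is in the region where $w$ is nonincreasing. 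For $x<\xi_1$: if $x\le\xi_0$ then $w(x)\ge m>w(\xi_1)$; if $\xi_0\le x<\xi_1$ then $w(x)\ge w(\xi_1)$ by monotonicity, and the inequality is strict because $w$ is strictly decreasing there (or, if one only has "nonincreasing," one replaces $\xi_1$ by a slightly larger point where strict decrease has occurred — this can be arranged since $w\to 0$ cannot be eventually constant). This yields $w(x)>w(\xi_1)$ for all $x<\xi_1$.

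For the second statement, the key point is that on $[\xi_1,+\infty)$ the function $w$ is a decreasing homeomorphism onto $(0,w(\xi_1))$, so for any small enough level $\ell>0$ the set $w^{-1}(\ell)$ has a well-defined minimum and maximum; moreover $\min w^{-1}(\ell)$ and $\max w^{-1}(\ell)$ both tend to $+\infty$ as $\ell\to 0^+$ (the minimum because $w<\ell$ on a neighbourhood of $-\infty$ forces... no: rather, $w(x)>w(\xi_1)>\ell$ for $x<\xi_1$ by the first part, so every preimage lies in $[\xi_1,+\infty)$, and since $w\to 0$ the preimages march off to $+\infty$ as $\ell\downarrow 0$). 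Write $g(\ell):=\max w^{-1}(\ell)$. The heart of the matter is a lower bound on how fast $g$ grows as $\ell\to 0$; equivalently, an upper bound on $w$ far out, i.e. that $w(x+\chi)/w(x)\to 1$ as $x\to+\infty$ for fixed $\chi$. This follows from $w'=o(w)$: indeed $\ln w(x+\chi)-\ln w(x)=\int_x^{x+\chi} (w'/w)\to 0$, so $w(x+\chi)\sim w(x)$. Hence, given $0<a<b$, $\Gamma_a,\Gamma_b>0$ and $\chi>0$, for $t$ large the level $\ell_a(t)=\Gamma_a e^{-at}$ satisfies $\ell_a(t)/\ell_b(t)=(\Gamma_a/\Gamma_b)e^{(b-a)t}\to+\infty$, so $\ell_b(t)<\ell_a(t)$ eventually; combined with $w(x+\chi)\sim w(x)$ one gets, for $t$ large, $w\bigl(\max w^{-1}(\ell_a(t))+\chi\bigr)\ge \tfrac12 w\bigl(\max w^{-1}(\ell_a(t))\bigr)=\tfrac12\ell_a(t)>\ell_b(t)$, and since $w$ is decreasing past $\xi_1$ this forces $\max w^{-1}(\ell_a(t))+\chi\le \max w^{-1}(\ell_b(t))$, which is \eqref{eq:___max_Qprop}. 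The inequality \eqref{eq:___min_Qprop} for the minima is proved in exactly the same way, using that all preimages lie in the region $[\xi_1,+\infty)$ where $w$ is strictly monotone so that $\min w^{-1}(\ell)$ is also characterized by $w=\ell$ at a single point once $\ell<w(\xi_1)$ — in fact for $\ell<w(\xi_1)$ one has $w^{-1}(\ell)\subset[\xi_1,+\infty)$ and $\min=\max$ is immaterial; the monotone rearrangement argument goes through verbatim.

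I would carry out the steps in this order: (1) produce $\xi_1$ and prove the strict inequality $w(x)>w(\xi_1)$ for $x<\xi_1$; (2) deduce that for $\ell<w(\xi_1)$, every element of $w^{-1}(\ell)$ lies in $[\xi_1,+\infty)$, and that $\min w^{-1}(\Gamma e^{-ct}),\max w^{-1}(\Gamma e^{-ct})\to+\infty$ as $t\to\infty$; (3) prove the asymptotic $w(x+\chi)\sim w(x)$ as $x\to+\infty$ from $w'=o(w)$; (4) combine (2) and (3) with the comparison $\Gamma_b e^{-bt}=o(\Gamma_a e^{-at})$ and the monotonicity of $w$ on $[\xi_1,+\infty)$ to conclude \eqref{eq:___min_Qprop} and \eqref{eq:___max_Qprop}. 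The main obstacle — really the only non-bookkeeping point — is step (3), the "slow variation" estimate $w(x+\chi)/w(x)\to 1$; everything else is a careful but routine use of monotonicity and the fact that $w\to 0$. One should also be a little careful in step (1) to upgrade "nonincreasing" to an effective strict decrease, which is legitimate because a function that is eventually constant cannot tend to $0$ while staying positive.
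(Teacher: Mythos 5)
Your proposal is correct, and the key step is genuinely different from the paper's. For the first assertion you use the same bookkeeping (set $m=\inf_{(-\infty,\xi_0]}w>0$, find a point past $\xi_0$ below $m$, then deal with the strict inequality); the paper handles the strictness cleanly by choosing $\xi_1$ with $w'(\xi_1)<0$ (which exists because $w$ is $C^2$, positive, and tends to $0$ so cannot be eventually constant), whereas your "replace $\xi_1$ by a slightly larger point where strict decrease has occurred" is morally the same but vaguer — as written it does not immediately give $w(x)>w(\xi_1)$ for \emph{all} $x\in(\xi_0,\xi_1)$, and the derivative condition $w'(\xi_1)<0$ is the cleanest way to close that gap. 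For the second assertion the paper argues by contradiction: if the inequality fails along a sequence $t_n\to\infty$, the mean value theorem produces a $\theta_n$ between $\min w^{-1}(\Gamma_a e^{-at_n})$ and $\min w^{-1}(\Gamma_b e^{-bt_n})$ with $|w'(\theta_n)|\geq \Gamma_a e^{-at_n}/(2\chi)$, which contradicts $w'=o(w)$. You instead derive the slow-variation estimate $w(x+\chi)/w(x)\to 1$ from $\ln w(x+\chi)-\ln w(x)=\int_x^{x+\chi}w'/w\to 0$, and then argue directly: for $t$ large, $w(\max w^{-1}(\Gamma_a e^{-at})+\chi)\geq\tfrac12\Gamma_a e^{-at}>\Gamma_b e^{-bt}$, so every preimage of the smaller level $\Gamma_b e^{-bt}$ lies strictly to the right of $\max w^{-1}(\Gamma_a e^{-at})+\chi$, which gives both \eqref{eq:___min_Qprop} and \eqref{eq:___max_Qprop} at once. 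Both proofs ultimately rest on $w'=o(w)$; yours is more constructive and arguably more transparent (it isolates slow variation as the driving mechanism), while the paper's contradiction-plus-MVT route is a bit shorter. One small inaccuracy in your write-up: condition $(Q)$ gives \emph{nonincreasing}, not strictly decreasing, on $[\xi_0,+\infty)$, so $w^{-1}(\ell)$ may be a nondegenerate interval and ``$\min=\max$ is immaterial'' is not literally true; fortunately this does not affect your argument, since the slow-variation step bounds the location of \emph{all} preimages of $\Gamma_b e^{-bt}$ below by $\max w^{-1}(\Gamma_a e^{-at})+\chi$, which handles both the min and max statements simultaneously.
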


\begin{proof}
Set $m\coloneqq\inf_{(-\infty,\xi_{0}]}w$. Since $\liminf_{-\infty}w>0$
and $w>0$, it is easy to check that $m>0$. Now, since $\lim_{+\infty}w=0$,
there exists $x_{+}>\xi_{0}$ such that $w(x_{+})<m$. Since $w(x_{+})>0$
and $\lim_{+\infty}w=0$, we can find $\xi_{1}\geq x_{+}$ satisfying
$w^{\prime}(\xi_{1})<0$. Finally, as $w^{\prime}(\xi_{1})<0$ and
$w$ is nonincreasing on $[\xi_{0},+\infty)$, we can readily check
that, for any $x<\xi_{1}$, there holds
\[
w(\xi_{1})\begin{cases}
\leq w(x_{+})<m\leq w(x), & \text{if }x\leq\xi_{0},\\
<w(x), & \text{if }x\in(\xi_{0},\xi_{1}),
\end{cases}
\]
which proves the first assertion.

We now turn to the second assertion. We only give a proof of \eqref{eq:___min_Qprop},
seeing as the proof of \eqref{eq:___max_Qprop} is identical. In the
first place, set $\overline{t}\geq0$ large enough such that for any
$t\geq\overline{t}$
\[
\Gamma_{a}e^{-at},\Gamma_{b}e^{-bt}\in(0,m),\qquad\forall t\geq\overline{t},
\]
hence the sets $w^{-1}\left(\Gamma_{a}e^{-at}\right)$, $w^{-1}\left(\Gamma_{b}e^{-bt}\right)$
are well-defined and compact. Next, suppose by contradiction that
there exist $0<a<b$ and positive constants $\Gamma_{a},\Gamma_{b},\chi$
such that
\[
\forall t^{\ast}\geq\overline{t},\quad\exists t\geq t^{\ast}\qquad\min w^{-1}\left(\Gamma_{a}e^{-at}\right)+\chi>\min w^{-1}\left(\Gamma_{b}e^{-bt}\right).
\]
As a result, we can construct an increasing sequence $(t_{n})_{n}$
such that $\lim_{n\rightarrow+\infty}t_{n}=+\infty$ and the above
inequality holds for $t=t_{n}$. In particular, there exists $N\in\mathbb{N}$
such that for any $n\geq N$, there holds $\Gamma_{a}e^{-at_{n}}\leq w(\xi_{1})$,
whence

\[
\min w^{-1}\left(\Gamma_{a}e^{-at_{n}}\right)\geq\xi_{1}>\xi_{0}.
\]
Meanwhile, since $b>a$, we can select $N$ possibly even larger so
that for any $n\geq N$ there holds 
\[
\Gamma_{b}e^{-bt_{n}}\leq\frac{\Gamma_{a}}{2}e^{-at_{n}}<\Gamma_{a}e^{-at_{n}}.
\]
Both assertions imply, by monotony of $w$ on $[\xi_{0},+\infty)$,
that
\[
\min w^{-1}\left(\Gamma_{a}e^{-at_{n}}\right)<\min w^{-1}\left(\Gamma_{b}e^{-bt_{n}}\right).
\]
Now, from the mean value theorem, there is $\theta_{n}\in\Bigl(\min w^{-1}\left(\Gamma_{a}e^{-at_{n}}\right),\min w^{-1}\left(\Gamma_{b}e^{-bt_{n}}\right)\Bigr)$
such that 
\[
w^{\prime}(\theta_{n})=\frac{\Gamma_{a}e^{-at_{n}}-\Gamma_{b}e^{-bt_{n}}}{\min w^{-1}\left(\Gamma_{a}e^{-at_{n}}\right)-\min w^{-1}\left(\Gamma_{b}e^{-bt_{n}}\right)}<0,
\]
therefore 
\begin{align}
\left|w^{\prime}(\theta_{n})\right| & \geq\frac{\Gamma_{a}e^{-at_{n}}-\Gamma_{b}e^{-bt_{n}}}{\chi}\geq\frac{\Gamma_{a}e^{-at_{n}}}{2\chi}.\label{eq:___absurd}
\end{align}
However, since $w$ satisfies $(Q)$, there holds $w^{\prime}(x)=o(w(x))$
as $x\rightarrow+\infty$. As a consequence, there exists $x_{\chi}\in\mathbb{R}$
such that $|w^{\prime}(x)|\leq\frac{1}{4\chi}w(x)$ for any $x\geq x_{\chi}$.
As $\lim_{n\rightarrow+\infty}\theta_{n}=+\infty$, we obtain $\theta_{n}>x_{\chi}$
for $n$ large enough. For such $n$, we derive the following inequality~:
\[
\left|w^{\prime}(\theta_{n})\right|\leq\frac{1}{4\chi}w(\theta_{n})\leq\frac{1}{4\chi}\Gamma_{a}e^{-at_{n}},
\]
which contradicts \eqref{eq:___absurd}. Thus \eqref{eq:___min_Qprop}
holds.
\end{proof}

\subsection{Some eigenvalue problems\label{subsec:Eigen_results}}

We present here some useful eigenelements. This subsection is quoted
from \cite[Subsection 2.1]{AlfBerRao_17}, which was based on the
results of \cite{BerHamRos_07,BerNirVar_94,BerRos_08}.

The theory of generalized principal eigenvalue has been developed
in \cite{BerHamRos_07}, and is well adapted to our problem when $r$,
thus $\tilde{r}$, is bounded. Following \cite{BerHamRos_07}, we
can then define, for $\tilde{r}\in L^{\infty}(\Omega)$ and $\Omega\subset\mathbb{R}$
a (possibly unbounded) nonempty domain, the generalized principal
eigenvalue 
\begin{equation}
\lambda(\tilde{r},\Omega)\coloneqq\sup\left\{ \lambda\in\mathbb{R}\mid\exists\phi\in H_{loc}^{2}(\Omega),\,\phi>0,\,\phi^{\prime\prime}(Y)+(\tilde{r}(Y)+\lambda)\phi(Y)\leq0\right\} .\label{eq:def_gen_ppal_eig}
\end{equation}
As shown in \cite{BerHamRos_07}, if $\Omega$ is bounded, $\lambda(\tilde{r},\Omega)$
coincides with the Dirichlet principal eigenvalue $\lambda_{D}$,
that is the unique real number such that there exists $\phi$ defined
on $\Omega$ (unique up to multiplication by a scalar) satisfying
\[
\begin{cases}
-\phi^{\prime\prime}(Y)-\tilde{r}(Y)\phi(Y)=\lambda_{D}\phi(Y) & \text{a.e. in }\Omega,\\
\phi>0 & \text{on }\Omega,\\
\phi=0 & \text{on }\partial\Omega.
\end{cases}
\]

Note that $\lambda(\tilde{r},\Omega)\leq\lambda(\tilde{r},\Omega^{\prime})$
if $\Omega\supset\Omega^{\prime}$. The following proposition shows
that $\lambda(\tilde{r},\Omega)$ can be obtained as a limit of increasing
domains. 
\begin{prop}
\label{prop:monotony_eig}Assume that $\tilde{r}\in L^{\infty}(\mathbb{R})$.
For any nonempty domain $\Omega\subset\mathbb{R}$ and any sequence
of nonempty domains $(\Omega_{n})_{n\in\mathbb{N}}$ such that 
\[
\Omega_{n}\subset\Omega_{n+1},\qquad\cup_{n\in\mathbb{N}}\Omega_{n}=\Omega,
\]
there holds $\lambda(\tilde{r},\Omega_{n})\searrow\lambda(\tilde{r},\Omega)$
as $n\rightarrow+\infty$. Furthermore, there exists a generalized
principal eigenfunction, that is a positive function $\Gamma\in H_{loc}^{2}(\mathbb{R})$
such that 
\[
-\Gamma^{\prime\prime}(Y)-\tilde{r}(Y)\Gamma(Y)=\lambda(\tilde{r},\Omega)\Gamma(Y),\qquad\text{a.e. in }\Omega.
\]
\end{prop}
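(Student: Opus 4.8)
The plan is the classical two-step argument: deduce the monotone convergence $\lambda_n:=\lambda(\tilde r,\Omega_n)\searrow\lambda_\infty$ with $\lambda_\infty\ge\lambda(\tilde r,\Omega)$ from the domain monotonicity of $\lambda(\tilde r,\cdot)$, and then manufacture a positive solution of the eigenvalue equation on $\Omega$ at the level $\lambda_\infty$; such a solution is in particular an admissible test function in the definition \eqref{eq:def_gen_ppal_eig} of $\lambda(\tilde r,\Omega)$, which forces $\lambda_\infty\le\lambda(\tilde r,\Omega)$, closes the identity, and at the same time is the announced generalized principal eigenfunction $\Gamma$.

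First I would reduce to bounded domains: replacing $\Omega_n$ by $\Omega_n\cap(-n,n)$ keeps the sequence nondecreasing with union $\Omega$, and since $\lambda(\tilde r,\cdot)$ is nonincreasing under inclusion (recalled in the excerpt), the squeeze $\lambda(\tilde r,\Omega)\le\lambda(\tilde r,\Omega_n)\le\lambda\bigl(\tilde r,\Omega_n\cap(-n,n)\bigr)$ shows it is enough to treat truncated, hence bounded, domains (this is also exactly the situation used later, $\Omega=\mathbb{R}$, $\Omega_n=(-n,n)$). For bounded $\Omega_n$ one has the Dirichlet principal pair $(\lambda_n,\phi_n)$ with $\phi_n>0$ in $\Omega_n$, $\phi_n\in H^2(\Omega_n)\cap H^1_0(\Omega_n)$, and I normalize $\phi_n(x_0)=1$ at a fixed $x_0\in\Omega_1$. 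By inclusion monotonicity $(\lambda_n)$ is nonincreasing, bounded above by $\lambda_1$ and below by $\lambda(\tilde r,\Omega)\ge-\operatorname{ess\,sup}\tilde r$ (test \eqref{eq:def_gen_ppal_eig} with $\phi\equiv1$), so $\lambda_n\searrow\lambda_\infty\ge\lambda(\tilde r,\Omega)$.

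The technical heart is the compactness step. On any compact interval $K\Subset\Omega$ one has $K\Subset\Omega_n$ for $n$ large, and $\phi_n$ solves $-\phi_n''+c_n\phi_n=0$ with $c_n=-(\tilde r+\lambda_n)$ uniformly bounded in $L^\infty$ (by the two-sided bound on $\lambda_n$). The Harnack inequality for nonnegative solutions then gives $\sup_K\phi_n\le C_K\inf_K\phi_n$ with $C_K$ independent of $n$; chaining Harnack balls back to $x_0$ and using $\phi_n(x_0)=1$ yields uniform upper and strictly positive lower bounds for $\phi_n$ on each compact subset of $\Omega$. Interior elliptic (here just ODE) estimates then bound $\phi_n$ in $H^2_{loc}(\Omega)$, so along a subsequence $\phi_n\to\Gamma$ in $C^1_{loc}(\Omega)$ and weakly in $H^2_{loc}(\Omega)$, with $\Gamma>0$ (from the uniform local lower bounds), $\Gamma(x_0)=1$, and, passing to the limit in the equation, $-\Gamma''-\tilde r\Gamma=\lambda_\infty\Gamma$ a.e. in $\Omega$. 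When $\Omega=\mathbb{R}$ this $\Gamma$ is already a positive $H^2_{loc}(\mathbb{R})$ function; in general one extends it to $\mathbb{R}$.

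It remains to identify $\lambda_\infty$: since $\Gamma>0$ satisfies $\Gamma''+(\tilde r+\lambda_\infty)\Gamma\le0$ (indeed with equality) on $\Omega$, the definition \eqref{eq:def_gen_ppal_eig} gives $\lambda_\infty\le\lambda(\tilde r,\Omega)$, and together with the first step $\lambda_n\searrow\lambda(\tilde r,\Omega)$, while $\Gamma$ is the sought generalized principal eigenfunction. The main obstacle I anticipate is precisely obtaining the uniform lower bound on $\phi_n$ on compact sets — without it the limit $\Gamma$ could be identically zero and the argument would collapse — and this is exactly what the Harnack inequality anchored at the normalization point $x_0$ delivers; a secondary point to watch is that $(\lambda_n)$ stays bounded, so that the zeroth-order coefficients $c_n$ are uniformly $L^\infty$, which is guaranteed by $\lambda(\tilde r,\Omega)>-\infty$.
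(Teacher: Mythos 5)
The paper does not prove Proposition \ref{prop:monotony_eig} itself: it cites \cite[Proposition 4.2]{BerHamRos_07} (and, for the later extension to $\tilde r\in L^{\infty}_{loc}$ merely bounded above, refers back to that same proof). Your argument is exactly the classical Berestycki--Hamel--Rossi scheme underlying that cited proposition: monotone limit $\lambda_n\searrow\lambda_\infty\ge\lambda(\tilde r,\Omega)$, normalization of the Dirichlet eigenfunctions at a fixed $x_0$, a uniform Harnack inequality on compacts (using the uniform $L^\infty$ bound on $-(\tilde r+\lambda_n)$) to prevent the limit from degenerating, interior elliptic estimates to extract a $C^1_{loc}$/weak $H^2_{loc}$ limit $\Gamma>0$, and then feeding $\Gamma$ back into the definition of $\lambda(\tilde r,\Omega)$ to close the inequality. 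So your proof is correct and takes essentially the same route as the paper's source.
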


Since our growth function $\tilde{r}$ is only assumed to be bounded
from above, we extend definition \eqref{eq:def_gen_ppal_eig} to functions
$\tilde{r}$ in $L_{loc}^{\infty}(\Omega)$ such that $\tilde{r}\leq r_{max}$
on $\Omega$, for some $r_{max}>0$. The set
\[
\Lambda(\tilde{r},\Omega)\coloneqq\left\{ \lambda\in\mathbb{R}\mid\exists\phi\in H_{loc}^{2}(\Omega),\,\phi>0,\,\phi^{\prime\prime}(Y)+(\tilde{r}(Y)+\lambda)\phi(Y)\leq0\right\} 
\]
is not empty since $\Lambda(\max(\tilde{r},-r_{max}),\Omega)\subset\Lambda(\tilde{r},\Omega)$,
and is bounded from above, thanks to the monotony property of $\Omega\mapsto\Lambda(\tilde{r},\Omega)$.
Finally, going back to the proof of \cite[Proposition 4.2]{BerHamRos_07},
we notice that Proposition \ref{prop:monotony_eig} remains valid
under the weaker assumption $\tilde{r}\in L_{loc}^{\infty}(\Omega)$
is bounded from above.

It follows from the above discussion that we are equipped with the
generalized principal eigenvalue $\lambda_{0}\in\mathbb{R}$ and a
generalized principal eigenfunction $\Gamma_{0}\in H_{loc}^{2}(\mathbb{R})$
such that
\[
\begin{cases}
-\Gamma_{0}^{\prime\prime}(Y)-\tilde{r}(Y)\Gamma_{0}(Y)=\lambda_{0}\Gamma_{0}(Y) & \text{a.e. in }\Omega,\\
\Gamma_{0}>0 & \text{on }\Omega,\\
||\Gamma_{0}||_{L^{\infty}(\mathbb{R})}=1.
\end{cases}
\]
Let us also mention that, given that $\tilde{r}$ satisfies Assumption
\ref{assu:func_r_k_n0}, the function $\Gamma_{0}$ decays at least
exponentially as $|Y|\rightarrow+\infty$. This result holds by using
the comparison principle on $\{|Y|>Y_{0}\}$ with a supersolution
of the form $Ce^{-a|Y|}$, with $Y_{0}$ large enough (so that $\tilde{r}+\lambda_{0}\leq-\varepsilon$
for some $\varepsilon>0$), $a$ small enough and $C$ large enough.

\subsection{Preliminary estimates}

The following lemma gathers preliminary results from \cite{AlfBerRao_17},
with $n_{0}$ satisfying Assumption \ref{assu:func_r_k_n0} instead
of being compactly supported. The proof of the following a priori
estimates is easily adapted from \cite[Lemmas 2.3 and 2.4]{AlfBerRao_17}
and is therefore omitted (see also the proof of Lemma \ref{lem:HTx_tails}).
\begin{lem}
[Some a priori estimates]\label{lem:n_estimates}Let $r,K,n_{0}$
satisfy Assumption \ref{assu:func_r_k_n0}. Then, there exist $N_{\infty}>0$,
$C>0$ and $\kappa>0$ such that any global nonnegative solution of
\eqref{eq:dim_2_xy} satisfies 
\begin{equation}
\int_{\mathbb{R}}n(t,x,y)dy\leq N_{\infty},\label{eq:maj_intn}
\end{equation}
\begin{equation}
n(t,x,y)\leq Ce^{-\kappa|y-Bx|},\label{eq:maj_n}
\end{equation}
for all $t\geq0$, $x\in\mathbb{R}$, $y\in\mathbb{R}$.
\end{lem}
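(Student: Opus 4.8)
The plan is to adapt the a priori estimates from \cite[Lemmas 2.3 and 2.4]{AlfBerRao_17} to the present setting, the only difference being that $n_0$ is no longer compactly supported but merely satisfies the exponential bound \eqref{eq:borne_n0}. First I would establish \eqref{eq:maj_intn}. Integrating \eqref{eq:dim_2_xy} in $y$ over $\mathbb{R}$, the diffusion term $\partial_{yy}n$ contributes nothing (after controlling the flux at infinity via the tail estimate, once it is available, or by an approximation argument), and using $r(y-Bx)\leq r_{max}$ together with $K\geq k_-$ one gets, for $N(t,x)=\int_{\mathbb{R}}n(t,x,y)\,dy$,
\[
\partial_t N-\partial_{xx}N\leq r_{max}N-k_-N^2 .
\]
A comparison with the spatially homogeneous ODE $z'=r_{max}z-k_-z^2$, whose solutions are bounded by $\max(\|N(0,\cdot)\|_\infty, r_{max}/k_-)$, yields \eqref{eq:maj_intn} with $N_\infty$ depending only on $C_0,\kappa_0,B,r_{max},k_-$ through the bound on $N(0,\cdot)$ coming from \eqref{eq:borne_n0}; note $\int_{\mathbb R}C_0e^{-\kappa_0|y-Bx|}\,dy=2C_0/\kappa_0$ is independent of $x$, so $\|N(0,\cdot)\|_\infty<\infty$.

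Next I would prove the tail estimate \eqref{eq:maj_n}. Working in the rotated variables $(X,Y)$ from \eqref{eq:XY_eq_xy}, where \eqref{eq:dim_2_xy} becomes \eqref{eq:dim_2_XY} and the growth rate is $\tilde r(Y)=r(\sqrt{1+B^2}\,Y)$, one discards the nonlocal term (it is nonnegative, so $v$ is a subsolution of the linear equation $\partial_t v-\partial_{XX}v-\partial_{YY}v=\tilde r(Y)v$). Since $r$, hence $\tilde r$, is confining \eqref{eq:r_condition}, fix $\delta>0$ and $R_0>0$ with $\tilde r(Y)\leq-\delta$ for $|Y|\geq R_0$. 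The idea is to build a supersolution of the form $\overline v(t,X,Y)=A\,e^{-a(|Y|-R_0)_+}$ (a pure function of $Y$, stationary in $t$ and $X$): for $|Y|>R_0$ one checks $-\partial_{YY}\overline v-\tilde r(Y)\overline v\geq (-a^2+\delta)\overline v\geq 0$ by taking $a\leq\sqrt{\delta}$, while near $|Y|=R_0$ one absorbs the (distributional) positive contribution of the kink and the possibly positive values of $\tilde r$ on the bounded set $\{|Y|\leq R_0\}$ — where $\tilde r\leq r_{max}$ — by choosing $A$ large enough relative to $\|v_0\|_{L^\infty}$ and $r_{max}R_0$; here $\|v_0\|_{L^\infty}=\|n_0\|_{L^\infty}\leq C_0$. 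The comparison principle for the \emph{linear} parabolic operator (which does hold) then gives $v(t,X,Y)\leq A e^{-a(|Y|-R_0)_+}$, and undoing the rotation, since $|Y|=|y-Bx|/\sqrt{1+B^2}$, produces \eqref{eq:maj_n} with suitable $C,\kappa>0$. One should double-check that $v_0(X,Y)=n_0(x,y)\leq C_0e^{-\kappa_0\sqrt{1+B^2}\,|Y|}$ is itself dominated by the initial datum $A e^{-a(|Y|-R_0)_+}$ of the supersolution; enlarging $A$ and shrinking $a$ handles this.

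The main obstacle — really the only subtle point — is justifying the integration by parts in $y$ used for \eqref{eq:maj_intn}, i.e. that the $y$-flux $\partial_y n$ vanishes at $y=\pm\infty$ uniformly enough in $x$ on compact time intervals. This is circular with \eqref{eq:maj_n} if one is not careful, so I would order the argument as: first derive the tail bound \eqref{eq:maj_n} by the supersolution/comparison argument above (which does not require \eqref{eq:maj_intn}), then use \eqref{eq:maj_n} together with parabolic interior estimates to control $\partial_y n$ and legitimize the integration, finally obtaining \eqref{eq:maj_intn}. Alternatively, one can avoid the flux issue entirely by testing \eqref{eq:dim_2_XY} against a smooth cutoff and passing to the limit, using \eqref{eq:maj_n} for the tail control — either way the estimates are uniform in $t\geq 0$. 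Since all of this is a routine adaptation of \cite[Lemmas 2.3 and 2.4]{AlfBerRao_17}, and the only new ingredient is replacing ``compact support'' by the explicit exponential envelope \eqref{eq:borne_n0} (which still yields a finite $\|n_0\|_\infty$ and a uniformly-in-$x$ finite mass $\int n_0(x,\cdot)$), the detailed verification is omitted, exactly as the excerpt states.
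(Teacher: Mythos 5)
Your proof of \eqref{eq:maj_intn} is essentially the paper's (compare the first step of the proof of Lemma \ref{lem:HTx_tails}, to which the paper explicitly points): integrate in $y$, use $r\le r_{max}$ and $K\ge k_-$ to obtain $\partial_t N-\partial_{xx}N\le N(r_{max}-k_-N)$, then compare with the constant $\max\bigl(\|N(0,\cdot)\|_\infty,\,r_{max}/k_-\bigr)$. Your observation that $\|N(0,\cdot)\|_\infty\le 2C_0/\kappa_0$ is exactly the right replacement for compact support.

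However, your argument for the tail bound \eqref{eq:maj_n} has a genuine gap, and it is precisely where your suggested reordering leads you astray. You drop the nonlocal term and try to compare $v$ against the \emph{stationary} barrier $\overline v(Y)=A\,e^{-a(|Y|-R_0)_+}$ on all of $\mathbb R^2$. On the strip $\{|Y|<R_0\}$ this $\overline v$ is constant, so the supersolution inequality reads $-\partial_{YY}\overline v-\tilde r(Y)\overline v=-\tilde r(Y)A\ge 0$, which fails wherever $\tilde r(Y)>0$; enlarging $A$ cannot help, since the defect is proportional to $A$. Worse, in the case of interest $\lambda_0<0$, by the very definition \eqref{eq:def_gen_ppal_eig} there exists \emph{no} positive $\phi$ with $-\phi''-\tilde r\phi\ge 0$ on $\mathbb R$, so a bounded stationary supersolution of the linear operator does not exist at all: the linear problem without the competition term has genuinely unbounded solutions, and the boundedness of $n$ on the strip must come from the nonlocal term, not from a barrier for the linear equation.

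The paper's route (spelled out in the proof of Lemma \ref{lem:HTx_tails}) is the reverse of yours and avoids this obstacle: first establish $\int n\,dy\le N_\infty$, which does not require a tail bound — the integration in $y$ is justified for any reasonable global solution (e.g.\ constructed as a limit of bounded-domain problems, or by testing against cutoffs $\chi_R(y)$ and using $\int n\chi_R''\to 0$ since $n(t,x,\cdot)\in L^1$); this nonlocal bound makes the competition term a bounded coefficient, so the parabolic Harnack inequality yields a uniform bound on $n$ over the strip $\Omega_R=\{|Y|<R\}$; and only \emph{then} does one construct an exponential supersolution — but only on the complement $\Omega_R^c$, where $\tilde r\le-\alpha<0$ ensures the supersolution inequality holds, with the Harnack bound supplying the boundary data on $\partial\Omega_R$. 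Your circularity worry about the flux at $|y|\to\infty$ is thus not the real issue, and the fix you propose (proving the tails first) would actually break the argument.
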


While Lemma \ref{lem:n_estimates} provides us with a uniform bound,
we need more precise estimates on the nonlocal term $\int_{\mathbb{R}}n(t,x,y)dy$.
To do so, we invoke a refinement of the parabolic Harnack inequality,
as exposed in \cite{AlfBerRao_17}.

For any $(t,x)\in(0,+\infty)\times\mathbb{R}^{N}$ with $N\geq1$,
we consider a solution $u(t,x)$ of the following linear parabolic
equation 
\begin{equation}
\partial_{t}u(t,x)-\sum_{i,j=1}^{N}a_{i,j}(t,x)\partial_{x_{i}x_{j}}u(t,x)-\sum_{i=1}^{N}b_{i}(t,x)\partial_{x_{i}}u(t,x)=f(t,x)u(t,x),\qquad t>0,\,x\in\mathbb{R}^{N}\label{eq:___parab_pb}
\end{equation}
where the coefficients are bounded, and $(a_{i,j})_{i,j=1,\dots,N}$
is uniformly elliptic.
\begin{thm}
[{A refinement of the Harnack inequality \cite[Theorem 2.7]{AlfBerRao_17}}]\label{thm:Harnack_gen}Assume
that all the coefficients $(a_{i,j})_{i,j=1,\dots,N}$, $(b_{i})_{i=1,\dots,N}$,
f belong to $L_{loc}^{\infty}((0,+\infty)\times\mathbb{R}^{N})$,
and that $(a_{i,j})$ is uniformly positive definite on $\mathbb{R}^{N}$.
Assume there exists $K>0$ such that, for all $1\leq i,j\leq N$,
\[
a_{i,j}(t,x)\leq K,\ b_{i}(t,x)\leq K,\ f(t,x)\leq K,\qquad\text{a.e. on }(0,+\infty)\times\mathbb{R}^{N}.
\]
Let $R,\delta,U,\varepsilon,\rho$ be positive constants.

There exists $C>0$ such that for any $\overline{t}\geq\varepsilon$,
any $\overline{x}\in\mathbb{R}^{N}$ and any nonnegative weak solution
$u\in H^{1}((0,+\infty)\times\mathbb{R}^{N})$ of \eqref{eq:___parab_pb}
satisfying $||u||_{L^{\infty}(\mathbb{R}^{N})}\leq U$, there holds
\[
\max_{x\in B(\overline{x},R)}u(\overline{t},x)\leq C\min_{x\in B(\overline{x},R)}u(\overline{t},x)+\delta.
\]
\end{thm}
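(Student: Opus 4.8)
The plan is to prove this by contradiction, via a compactness argument closed off with the strong maximum principle --- the standard route for such ``Harnack inequality with an additive error'' statements.

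First I would dispose of trivial cases and normalize the geometry. One may assume $U>\delta$, since otherwise $\max_{x\in B(\overline{x},R)}u(\overline{t},x)\le\|u\|_{L^{\infty}}\le U\le\delta$ and the conclusion holds with $C=1$. Translating in space and time, one may take $\overline{x}=0$ and $\overline{t}=0$; here the hypothesis $\overline{t}\ge\varepsilon$ is exactly what makes the time translation harmless, for $\tilde{u}(s,x):=u(\overline{t}+s,x)$ is then a nonnegative solution, still bounded by $U$ and solving an equation with the same coefficient bounds and ellipticity constant, on the \emph{fixed} cylinder $(-\varepsilon/2,0]\times\mathbb{R}^{N}$. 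This time slab of fixed length below the slice $\{t=0\}$ is indispensable for the parabolic estimates below; indeed, without a lower bound on $\overline{t}$ the statement is false, since a solution started from a narrow spike has arbitrarily large $\max/\min$ over $B_{R}$ at small positive times.

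Next I would set up the contradiction. If the conclusion failed for some fixed $R,\delta,U,\varepsilon$, there would exist coefficients $(a_{ij}^{k}),(b_{i}^{k}),f^{k}$ obeying the hypotheses, constants $C_{k}\to+\infty$, and nonnegative solutions $u_{k}$ on $(-\varepsilon/2,0]\times\mathbb{R}^{N}$ with $\|u_{k}\|_{L^{\infty}}\le U$ such that $\max_{B(0,R)}u_{k}(0,\cdot)>C_{k}\min_{B(0,R)}u_{k}(0,\cdot)+\delta$. This forces $\max_{B(0,R)}u_{k}(0,\cdot)>\delta$ for every $k$, while $\min_{B(0,R)}u_{k}(0,\cdot)\le(U-\delta)/C_{k}\to0$. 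By standard interior parabolic estimates the $u_{k}$ are uniformly bounded in $C^{0,\alpha}$ (and in suitable local Sobolev norms) on $[-\varepsilon/2,0]\times\overline{B(0,3R)}$, with constants depending only on $N$, $K$, the ellipticity constant, $R$ and $\varepsilon$. Extracting a subsequence, $u_{k}\to u_{\infty}$ uniformly there, the coefficients converge weakly-$*$ in $L^{\infty}_{\mathrm{loc}}$ to limits still satisfying the same bounds and ellipticity, and $u_{\infty}\ge0$ is a weak solution of a limit equation of the same type on $(-\varepsilon/2,0)\times B(0,3R)$. Taking limits of near-maximizers and of minimizers of $u_{k}(0,\cdot)$ over $\overline{B(0,R)}$ yields points $x^{*},x_{*}\in\overline{B(0,R)}$ with $u_{\infty}(0,x^{*})\ge\delta>0$ and $u_{\infty}(0,x_{*})=0$.

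The final step --- which I expect to be the main obstacle --- is to rule out the coexistence of these two facts for the nonnegative parabolic solution $u_{\infty}$. The subtlety is that $x_{*}$ sits on the \emph{terminal} slice $t=0$, i.e.\ the top of the parabolic cylinder, so the strong maximum principle cannot be applied at $(0,x_{*})$ directly and one must first drop down to a strictly earlier time. Concretely, I would apply the weak Harnack inequality for nonnegative supersolutions (of the operator incorporating the bounded zero-order term $-f_{\infty}$) on a backward cylinder based at $(0,x_{*})$: since the infimum of $u_{\infty}$ on a forward sub-cylinder touching $(0,x_{*})$ is zero, $u_{\infty}\equiv0$ on $\overline{B_{r}(x_{*})}\times\{-2r^{2}\}$ for all small $r>0$, so $u_{\infty}$ vanishes at the \emph{interior} point $(-2r^{2},x_{*})$. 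The parabolic strong maximum principle then forces $u_{\infty}\equiv0$ on $(-\varepsilon/2,-2r^{2}]\times B(0,3R)$; letting $r\to0$ and using continuity up to $t=0$ gives $u_{\infty}(0,\cdot)\equiv0$ on $B(0,3R)$, contradicting $u_{\infty}(0,x^{*})\ge\delta$. A secondary technical point to keep in mind is the passage to the limit in the lower-order terms of the equation, which rests on the local compactness of the $u_{k}$ in Sobolev norms and is immediate in the application of interest, where the principal part is simply the Laplacian.
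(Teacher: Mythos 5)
The paper itself does not prove Theorem \ref{thm:Harnack_gen}; it is quoted from \cite[Theorem 2.7]{AlfBerRao_17}, and the only original remark is that the constant $C$ in that proof is uniform over $\overline{t}\ge\varepsilon$. There is therefore no proof in this paper to compare against, and I assess your argument on its own merits.

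Your compactness-and-contradiction scheme is a legitimate route, and the endgame (drop strictly below the top slice via the weak Harnack inequality, then strong maximum principle, then continuity up to $t=0$) is done carefully and correctly, as is the normalization exploiting $\overline{t}\ge\varepsilon$ to get a fixed interior time slab. However, the passage to the limit in the equation is not a ``secondary technical point,'' and it does not sit in the lower-order terms: it is the main obstacle and it lives in the principal part. With only $L^\infty$ bounds on $(a_{ij}^k)$, Krylov--Safonov gives uniform interior $C^{0,\alpha}$ bounds on $u_k$ but \emph{not} uniform $W^{2,p}_{\mathrm{loc}}$ bounds (those require at least VMO leading coefficients), so $\partial_{ij}u_k$ need not converge strongly, and a product of a weak-$*$ limit ($a_{ij}^k$) with a weak limit ($\partial_{ij}u_k$) is not the limit of $a_{ij}^k\partial_{ij}u_k$. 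To close this for general coefficients you would have to abandon the ``limit equation'' and instead note that each $u_k$ is a viscosity sub/supersolution of the Pucci extremal inequalities
\[
\mathcal{M}^{-}_{\lambda,\Lambda}(D^2u_k)-K|Du_k|-Ku_k\le\partial_t u_k\le\mathcal{M}^{+}_{\lambda,\Lambda}(D^2u_k)+K|Du_k|+Ku_k,
\]
which are stable under uniform convergence and for which the weak Harnack and strong maximum principle are available. In the application here $a_{ij}=\delta_{ij}$ and $b_i=0$, so the gap disappears, as you observe; but the theorem is asserted for general bounded measurable coefficients.

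There is also a direct argument, much shorter and free of these difficulties, which is presumably what \cite{AlfBerRao_17} does and which makes transparent where $\delta$, $U$ and $\varepsilon$ enter. The standard parabolic Harnack inequality on a shifted cylinder gives, for any small $\tau\in(0,\varepsilon/4)$,
\[
\max_{B(\overline{x},R)}u(\overline{t}-2\tau,\cdot)\le C_H\min_{B(\overline{x},R)}u(\overline{t},\cdot),
\]
with $C_H$ depending only on $N,R,\tau$, the ellipticity constant and $K$, uniformly in $\overline{t}\ge\varepsilon$. Meanwhile the Krylov--Safonov interior H\"older estimate together with $\|u\|_{L^\infty}\le U$ gives $|u(\overline{t},x)-u(\overline{t}-2\tau,x)|\le C_0(2\tau)^{\alpha/2}$ for all $x$, with $C_0,\alpha$ uniform over $\overline{t}\ge\varepsilon$. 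Choosing $\tau$ so that $C_0(2\tau)^{\alpha/2}\le\delta$ and combining yields
\[
\max_{B(\overline{x},R)}u(\overline{t},\cdot)\le\max_{B(\overline{x},R)}u(\overline{t}-2\tau,\cdot)+\delta\le C_H\min_{B(\overline{x},R)}u(\overline{t},\cdot)+\delta,
\]
with no limiting process at all and no issue about the form of the coefficients.
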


Notice that, as seen from the proof of \cite[Theorem 2.7]{AlfBerRao_17},
the constant $C>0$ does not depend on $\overline{t}$ provided that
$\overline{t}\geq\varepsilon>0$, which validates the above setting.

\section{Acceleration result\label{sec:Main_proof}}

Subsections \ref{subsec:Upper_bound} and \ref{subsec:Lower_bound}
are devoted to prove the following~: under the hypotheses of Theorem
\ref{thm:gw_main}, there exist $T_{\mu,\varepsilon,\gamma}^{\ast}>0$
and $T_{\mu,\varepsilon,\Gamma,R}^{\ast}>0$ such that
\begin{equation}
\int_{\mathbb{R}}n(t,x,y)dy<\mu,\qquad\forall x\geq(1+B^{2})^{-1/2}\max\overline{u}_{0}^{-1}\left(\gamma e^{-(-\lambda_{0}+\varepsilon)t}\right),\qquad\forall t\geq T_{\mu,\varepsilon,\gamma}^{\ast},\label{eq:___intn_sup}
\end{equation}
\begin{equation}
\int_{\mathbb{R}}n(t,x,y)dy>\mu,\qquad\forall x\leq(1+B^{2})^{-1/2}\min\underline{u}_{0}^{-1}\left(\Gamma e^{-(-\lambda_{0}^{R}-\varepsilon)t}\right),\qquad\forall t\geq T_{\mu,\varepsilon,\Gamma,R}^{\ast}.\label{eq:___intn_inf}
\end{equation}
Subsection \ref{subsec:Proof_conc} concludes the proof of our main
result, namely Theorem \ref{thm:gw_main}, based on \eqref{eq:___intn_sup}-\eqref{eq:___intn_inf}.
Lastly, in subsection \ref{subsec:HT_accel_proof}, we sketch the
proof of Theorem \ref{thm:gw_HT_accel}.

In the rest of this section, in view of \eqref{eq:K_bounds}, we shall
consider $K\equiv1$ without loss of generality. Additionally, to
alleviate notations, the function $\tilde{r}(Y)=r(\sqrt{1+B^{2}}Y)$
will be denoted as $r$.

\subsection{The upper bound \eqref{eq:___intn_sup}\label{subsec:Upper_bound}}

This subsection is devoted to the proof of \eqref{eq:___intn_sup}.
\begin{lem}
\label{lem:gw_borne_sup}Let $\Lambda>0$ and $\phi$ the solution
of the Cauchy problem
\begin{equation}
\begin{cases}
\partial_{t}\phi-\partial_{XX}\phi=\Lambda\phi, & t>0,\,X\in\mathbb{R},\\
\phi(0,X)=\overline{u}_{0}(X), & X\in\mathbb{R},
\end{cases}\label{eq:___heat_Cauchy_upp}
\end{equation}
where $\overline{u}_{0}$ satisfies $(Q)$. Set $E_{\eta}^{\phi}(t)=\left\{ X\in\mathbb{R}\mid\phi(t,X)=\eta\right\} $
for any $\eta>0$.

Then for any $\eta>0$, $\varepsilon\in(0,\Lambda)$, $\gamma>0$,
there exists $T=T_{\eta,\varepsilon,\gamma}\geq0$ such that for any
$t\geq T$ the set $E_{\eta}^{\phi}(t)$ is nonempty, admits a maximum,
and

\[
\max E_{\eta}^{\phi}(t)\leq\max\overline{u}_{0}^{-1}\left(\gamma e^{-(\Lambda+\varepsilon)t}\right).
\]
\end{lem}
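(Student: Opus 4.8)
The plan is to use the explicit heat-kernel representation of $\phi$ and estimate it by hand, isolating two facts about functions satisfying $(Q)$: a heavy tail forces the point $\xi(t):=\max\overline{u}_0^{-1}\!\big(\gamma e^{-(\Lambda+\varepsilon)t}\big)$ to grow superlinearly in $t$, while $\overline{u}_0'=o(\overline{u}_0)$ makes the logarithmic derivative of $\overline{u}_0$ uniformly small far to the right.

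First I would write $\phi(t,X)=e^{\Lambda t}\,(G_t*\overline{u}_0)(X)$, where $G_t(a)=(4\pi t)^{-1/2}e^{-a^2/(4t)}$, since $e^{-\Lambda t}\phi$ solves the pure heat equation with datum $\overline{u}_0$. Next I would record two preliminary observations. \emph{(i)} Since $\overline{u}_0>0$, $\lim_{+\infty}\overline{u}_0=0$, and $\overline{u}_0$ is nonincreasing on $[\xi_0,+\infty)$, the set $\overline{u}_0^{-1}(s)$ is, for $s$ small, a nonempty compact subset of $(\xi_0,+\infty)$, and $\xi(t)\to+\infty$. Applying the heavy-tail condition (for every $\epsilon>0$ one has $\overline{u}_0(X)\ge e^{-\epsilon X}$ for $X$ large) at $X=\xi(t)$ gives $\xi(t)\ge\frac1\epsilon\big((\Lambda+\varepsilon)t-\log\gamma\big)$ for $t$ large; as $\epsilon>0$ is arbitrary, $\xi(t)/t\to+\infty$. \emph{(ii)} Since any function satisfying $(Q)$ satisfies $\overline{u}_0'=o(\overline{u}_0)$ (see the remark after Definition \ref{def:condition_Q}), the quantity $\delta(t):=\sup_{X\ge\xi(t)/2}\big|\overline{u}_0'(X)/\overline{u}_0(X)\big|$ tends to $0$; hence for $t$ large and $\xi(t)/2\le z\le\xi(t)$ (so that $[z,\xi(t)]\subset[\xi(t)/2,+\infty)\subset[\xi_0,+\infty)$), integrating $(\log\overline{u}_0)'$ yields $\overline{u}_0(z)\le\overline{u}_0(\xi(t))\,e^{\delta(t)(\xi(t)-z)}=\gamma e^{-(\Lambda+\varepsilon)t}e^{\delta(t)(\xi(t)-z)}$.

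The heart of the proof is the claim $\sup_{X\ge\xi(t)}\phi(t,X)\to0$ as $t\to+\infty$. For $X\ge\xi(t)$ I would split $(G_t*\overline{u}_0)(X)$ over $\{z\ge\xi(t)\}$, $\{\xi(t)/2\le z<\xi(t)\}$, and $\{z<\xi(t)/2\}$. On the first set, monotonicity gives $\overline{u}_0(z)\le\gamma e^{-(\Lambda+\varepsilon)t}$, so this part contributes at most $\gamma e^{-\varepsilon t}$ to $\phi$. On the second set, observation \emph{(ii)}, the bound $G_t(X-z)\le G_t(\xi(t)-z)$ (valid because $0\le\xi(t)-z\le X-z$), and the Gaussian identity $\int_{\mathbb R}G_t(a)e^{\delta a}\,da=e^{\delta^2 t}$ give a contribution at most $e^{\Lambda t}\gamma e^{-(\Lambda+\varepsilon)t}e^{\delta(t)^2 t}=\gamma e^{(\delta(t)^2-\varepsilon)t}$, which tends to $0$ because $\delta(t)\to0$. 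On the third set, $X-z>\xi(t)/2$, so bounding $\overline{u}_0$ by $\|\overline{u}_0\|_\infty$ and using the standard Gaussian tail estimate gives a contribution at most $C\,e^{\Lambda t}\,\frac{\sqrt t}{\xi(t)}\,e^{-\xi(t)^2/(16t)}$, which tends to $0$ since observation \emph{(i)} makes $\xi(t)^2/(16t)$ eventually dominate $\Lambda t$. Summing the three bounds proves the claim.

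To conclude, fix $T=T_{\eta,\varepsilon,\gamma}$ (also depending on $\Lambda$ and $\overline{u}_0$) large enough that for $t\ge T$ the supremum above is $<\eta$ and $\frac14\big(\liminf_{-\infty}\overline{u}_0\big)e^{\Lambda t}>\eta$. For such $t$, choosing $A$ with $\overline{u}_0\ge\frac12\liminf_{-\infty}\overline{u}_0$ on $(-\infty,-A]$ gives $\phi(t,-A)>\eta$, while $\phi(t,X)\to0$ as $X\to+\infty$ at fixed $t$ (since $\overline{u}_0\to0$); continuity and the intermediate value theorem then yield $E_\eta^\phi(t)\neq\emptyset$, and being closed and bounded above by $\xi(t)$ it is compact, hence admits a maximum, which is $\le\xi(t)=\max\overline{u}_0^{-1}\!\big(\gamma e^{-(\Lambda+\varepsilon)t}\big)$. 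I expect the main obstacle to be the middle region of the split: a crude global Lipschitz bound on $\log\overline{u}_0$ would only prove the lemma for $\varepsilon$ large, so it is essential to combine the smallness of $|\overline{u}_0'/\overline{u}_0|$ far out with the superlinear growth $\xi(t)\gg t$. (Alternatively, one may observe that the upper bound of Hamel and Roques in Lemma \ref{lem:HR_gen} is obtained precisely by comparison with the solution of the linearised equation, i.e. with $\phi$ itself, so their estimate applies here verbatim; the heat-kernel computation above makes the argument self-contained.)
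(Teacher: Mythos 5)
Your proof is correct. The paper itself gives no argument for Lemma \ref{lem:gw_borne_sup}: it simply records that the proof is ``easily adapted from \cite[Theorem 1.1]{HamRoq_10}'' and omits it. Your self-contained heat-kernel computation supplies exactly what is missing, and the three-region split of $(G_t*\overline{u}_0)(X)$ is well chosen. On $\{z\ge\xi(t)\}$ the monotonicity of $\overline{u}_0$ past $\xi_0$ gives $\gamma e^{-\varepsilon t}$ directly; on $\{\xi(t)/2\le z<\xi(t)\}$ the bound $G_t(X-z)\le G_t(\xi(t)-z)$ together with $|\overline{u}_0'/\overline{u}_0|\le\delta(t)\to0$ on $[\xi(t)/2,\infty)$ and the Gaussian exponential-moment identity $\int G_t(a)e^{\delta a}\,da=e^{\delta^2 t}$ yields $\gamma e^{(\delta(t)^2-\varepsilon)t}\to0$; and on $\{z<\xi(t)/2\}$ the Gaussian tail beats $e^{\Lambda t}$ precisely because the heavy-tail condition forces $\xi(t)/t\to+\infty$, so $\xi(t)^2/(16t)-\Lambda t\to+\infty$. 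The concluding step — $\phi(t,X)\ge\tfrac14\,(\liminf_{-\infty}\overline{u}_0)\,e^{\Lambda t}>\eta$ for all $X\le -A$, $\phi(t,\cdot)\to0$ at $+\infty$, then the intermediate value theorem and closedness — correctly gives nonemptiness, compactness, and the upper bound $\max E_\eta^\phi(t)\le\xi(t)$. Your closing remark is also the right diagnosis of why the paper felt entitled to omit the proof: in \cite{HamRoq_10} the upper bound on level sets is obtained by comparing the nonlinear solution to the solution of the linearised equation, which \emph{is} $\phi$ here, so the cited estimate applies verbatim; your computation reconstructs that linear estimate from scratch, buying self-containedness at the modest cost of redoing a known Gaussian argument.
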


This means that the upper bound of Theorem \ref{lem:HR_gen} is still
valid when the logistic reaction term $\Lambda u(1-u)$ in \eqref{eq:Fisher-KPP}
is replaced with $\Lambda u$. Note that, as in Lemma \ref{lem:HR_gen},
$T_{\eta,\varepsilon,\gamma}$ also depends on $\Lambda$. However,
we ignore it here since we will fix $\Lambda=-\lambda_{0}$. The proof
of Lemma \ref{lem:gw_borne_sup} is easily adapted from that of \cite[Theorem 1.1]{HamRoq_10}
and is consequently omitted.
\begin{proof}
[Proof of \eqref{eq:___intn_sup}]Set $\Lambda=-\lambda_{0}>0$,
and 
\[
\overline{v}(t,X,Y)=\phi(t,X)\Gamma_{0}(Y),
\]
where $\phi$ is the solution of the Cauchy problem \eqref{eq:___heat_Cauchy_upp}.
We readily check that $\overline{v}$ is a supersolution of the operator
$\partial_{t}-\partial_{XX}-\partial_{YY}-r(Y)$~:
\[
\partial_{t}\overline{v}-\partial_{XX}\overline{v}-\partial_{YY}\overline{v}-r(Y)\overline{v}=\left(\partial_{t}\phi-\partial_{XX}\phi+\lambda_{0}\phi\right)\Gamma_{0}=0.
\]
Meanwhile, since $v\geq0$, it is clear that $v$ is a subsolution
of the same operator. Since \eqref{eq:encad_v0} provides $v_{0}(X,Y)\leq\overline{v}(0,X,Y)$,
we conclude with the maximum principle that $v\leq\overline{v}$ on
$[0,+\infty)\times\mathbb{R}^{2}$.

Next, let $\mu>0$, $\varepsilon\in(0,\Lambda)$, and $\gamma>0$.
One can select $\eta>0$ and $\delta>0$ such that 
\begin{equation}
\eta\sqrt{1+B^{2}}\int_{\mathbb{R}}\Gamma_{0}(Y)dY+\delta<\mu.\label{eq:___<_mu}
\end{equation}
From our control of the tails \eqref{eq:maj_n}, there exist $C,\kappa>0$
such that
\[
v(t,X,Y)\leq Ce^{-\kappa\sqrt{1+B^{2}}|Y|},\qquad\forall t\geq0,\,\forall(X,Y)\in\mathbb{R}^{2}.
\]
From there, we can find $\zeta=\zeta(\delta)>0$ large enough, such
that for any $t\geq0$ and $x\in\mathbb{R}$, there holds
\begin{align*}
\int_{\mathbb{R}}n(t,x,y)dy & =\int_{\mathbb{R}}v\Bigl(t,X(x,y),Y(x,y)\Bigr)dy\\
 & =\int_{\mathbb{R}}v\left(t,\frac{x+By}{\sqrt{1+B^{2}}},\frac{y-Bx}{\sqrt{1+B^{2}}}\right)dy\\
 & =\sqrt{1+B^{2}}\int_{\mathbb{R}}v(t,\sqrt{1+B^{2}}x+Bs,s)ds\\
 & \leq\sqrt{1+B^{2}}\left[\int_{-\infty}^{-\zeta}Ce^{-\kappa\sqrt{1+B^{2}}|s|}ds+\int_{-\zeta}^{+\infty}\overline{v}(t,\sqrt{1+B^{2}}x+Bs,s)ds\right]\\
 & \leq\delta+\sqrt{1+B^{2}}\int_{-\zeta}^{+\infty}\phi(t,\sqrt{1+B^{2}}x+Bs)\Gamma_{0}(s)ds.
\end{align*}

Now, set $E_{\eta}^{\phi}(t)=\left\{ x\in\mathbb{R}\mid\phi(t,X)=\eta\right\} $.
We will show that for any
\[
x\geq\frac{1}{\sqrt{1+B^{2}}}\max\overline{u}_{0}^{-1}\left(\gamma e^{-(\Lambda+\varepsilon)t}\right),
\]
and for $t$ large enough there holds $\phi(t,\sqrt{1+B^{2}}x+Bs)\leq\eta$
for any $s\geq-\zeta$. For now, let us only assume the condition
on $x$. By applying Proposition \ref{prop:ppties_Q} to $\overline{u}_{0}$
with
\[
\begin{cases}
a=\Lambda+\varepsilon/2, & b=\Lambda+\varepsilon,\\
\Gamma_{a}=\gamma, & \Gamma_{b}=\gamma,\\
\chi=B\zeta,
\end{cases}
\]
there exists $t^{\ast}(a,b,\Gamma_{a},\Gamma_{b},\chi)=t_{\varepsilon,\gamma,\zeta}^{\ast}\geq0$
such that for all $t\geq t_{\varepsilon,\gamma,\zeta}^{\ast}$ there
holds
\begin{align*}
\max\overline{u}_{0}^{-1}\left(\gamma e^{-(\Lambda+\varepsilon/2)t}\right)+B\zeta & \leq\max\overline{u}_{0}^{-1}\left(\gamma e^{-(\Lambda+\varepsilon)t}\right).
\end{align*}
From there, Lemma \ref{lem:gw_borne_sup} proves the existence of
$T_{\eta,\varepsilon/2,\gamma}\geq0$ such that for all $t\geq T_{\mu,\varepsilon,\gamma}^{\ast}\coloneqq\max(t_{\varepsilon,\gamma,\zeta}^{\ast},T_{\eta,\varepsilon/2,\gamma})$
the following holds
\begin{align*}
\max E_{\eta}^{\phi}(t) & \leq\max\overline{u}_{0}^{-1}\left(\gamma e^{-(\Lambda+\varepsilon/2)t}\right)\\
 & \leq\max\overline{u}_{0}^{-1}\left(\gamma e^{-(\Lambda+\varepsilon)t}\right)-B\zeta\\
 & \leq\sqrt{1+B^{2}}x-B\zeta.
\end{align*}
It is then easily deduced that for any $s>-\zeta$, there holds $\phi(t,\sqrt{1+B^{2}}x+Bs)<\eta$.
Indeed, assume by contradiction that there exists $x_{0}>\sqrt{1+B^{2}}x-B\zeta$
such that $\phi(t,x_{0})\geq\eta$. Since for any $t\geq0$ one has
$\phi(t,X)\rightarrow0$ as $X\rightarrow+\infty$, there would exist
$x_{1}\in E_{\eta}^{\phi}(t)\cap[x_{0},+\infty)$, which contradicts
the above inequality.

Finally, for any $t\geq T_{\mu,\varepsilon,\gamma}^{\ast}$ and $x\geq\frac{1}{\sqrt{1+B^{2}}}\max\overline{u}_{0}^{-1}\left(\gamma e^{-(\Lambda+\varepsilon)t}\right)$,
there holds~:
\begin{align*}
\int_{\mathbb{R}}n(t,x,y)dy & \leq\delta+\eta\sqrt{1+B^{2}}\int_{-\zeta}^{+\infty}\Gamma_{0}(s)ds\\
 & \leq\delta+\eta\sqrt{1+B^{2}}\int_{\mathbb{R}}\Gamma_{0}(s)ds,
\end{align*}
which, combined with \eqref{eq:___<_mu}, proves \eqref{eq:___intn_sup}.
\end{proof}

\subsection{The lower bound \eqref{eq:___intn_inf}\label{subsec:Lower_bound}}

This subsection is devoted to the proof of \eqref{eq:___intn_inf}. 
\begin{lem}
\label{lem:gw_borne_inf}Let $\underline{u}_{0}$ satisfy $(Q)$,
see Definition \ref{def:condition_Q}. Then there exists a function
$\underline{\underline{u}}_{0}\colon\mathbb{R}\rightarrow\mathbb{R}$
such that
\begin{itemize}
\item $\underline{\underline{u}}_{0}\leq\underline{u}_{0}$,
\item there exists $\xi_{2}\in\mathbb{R}$ such that $\underline{\underline{u}}_{0}=\underline{u}_{0}$
on $[\xi_{2},+\infty)$,
\item $\underline{\underline{u}}_{0}$ satisfies $(Q)$,
\item $\underline{\underline{u}}_{0}$ is of class $C^{2}$ on $\mathbb{R}$
and there exists $K\geq0$ such that $|\underline{\underline{u}}_{0}^{\prime\prime}|\leq K\underline{\underline{u}}_{0}$
on $\mathbb{R}$.
\end{itemize}
\end{lem}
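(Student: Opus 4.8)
The plan is to modify $\underline{u}_{0}$ only on a left half-line $(-\infty,\xi_{2}]$, leaving it untouched near $+\infty$ where the heavy tail lives, and to replace it there by a smooth bounded function bounded away from $0$. Concretely, recall from Definition \ref{def:condition_Q} that $\underline{u}_{0}$ is $C^{2}$ and nonincreasing on $[\xi_{0},+\infty)$, is positive everywhere, and satisfies $\liminf_{-\infty}\underline{u}_{0}>0$. Pick $\xi_{2}>\xi_{0}$, and set $\underline{\underline{u}}_{0}\coloneqq\underline{u}_{0}$ on $[\xi_{2},+\infty)$. On $(-\infty,\xi_{2}]$ we want to glue a function that (i) stays $\leq\underline{u}_{0}$ there, (ii) is $C^{2}$ with a $C^{2}$ match at $\xi_{2}$, (iii) is eventually constant (equal to some small positive value $m_{0}>0$) as $x\to-\infty$ so that all derivatives vanish and $\liminf_{-\infty}\underline{\underline{u}}_{0}>0$ holds, and (iv) stays bounded below by a positive constant on all of $(-\infty,\xi_{2}]$. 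The natural choice is a $C^{2}$ interpolation (e.g.\ using a fixed smooth cutoff $\chi$) between the value/first/second derivative triple $(\underline{u}_{0}(\xi_{2}),\underline{u}_{0}'(\xi_{2}),\underline{u}_{0}''(\xi_{2}))$ at $\xi_{2}$ and the constant $m_{0}$ on, say, $(-\infty,\xi_{2}-1]$, where $m_{0}$ is chosen small enough that $m_{0}\leq\min_{(-\infty,\xi_{2}]}\underline{u}_{0}$ (this minimum is positive since $\underline{u}_{0}>0$, $\liminf_{-\infty}\underline{u}_{0}>0$, and $[\xi_{2}-1,\xi_{2}]$ is compact) and such that on the transition interval $[\xi_{2}-1,\xi_{2}]$ the interpolant can be forced to stay between $m_{0}$ and $\underline{u}_{0}$.

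The key points to verify, in order: first, $\underline{\underline{u}}_{0}\leq\underline{u}_{0}$ on $\mathbb{R}$ — trivial on $[\xi_{2},+\infty)$ (equality), and on $(-\infty,\xi_{2}]$ by construction of the interpolant; one may, if needed, simply take $\underline{\underline{u}}_{0}\coloneqq\min(\underline{u}_{0},\widetilde{u})$ smoothed out, but it is cleaner to build the interpolant so the inequality is automatic. Second, $\underline{\underline{u}}_{0}$ satisfies $(Q)$: the asymptotics at $+\infty$ ($\lim_{+\infty}=0$, the $C^{2}$-and-nonincreasing tail, and $\underline{\underline{u}}_{0}''=o(\underline{\underline{u}}_{0})$) are inherited verbatim from $\underline{u}_{0}$ since the functions agree on $[\xi_{2},+\infty)\supset[\xi_{0},+\infty)$; $\liminf_{-\infty}\underline{\underline{u}}_{0}=m_{0}>0$; positivity holds because we kept $\underline{\underline{u}}_{0}\geq m_{0}>0$ on $(-\infty,\xi_{2}]$ and $\underline{\underline{u}}_{0}=\underline{u}_{0}>0$ beyond; uniform continuity and boundedness follow since $\underline{\underline{u}}_{0}$ is continuous, equals $\underline{u}_{0}\in L^{\infty}$ (uniformly continuous) on $[\xi_{2},+\infty)$ and is $C^{1}$ with bounded derivative on the compact-plus-constant piece $(-\infty,\xi_{2}]$. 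Third, $C^{2}$ globally: the only issue is the junction at $\xi_{2}$, which is handled by matching the three values; and fourth, $|\underline{\underline{u}}_{0}''|\leq K\underline{\underline{u}}_{0}$ on $\mathbb{R}$: on $(-\infty,\xi_{2}]$ both $\underline{\underline{u}}_{0}''$ is bounded (continuous, eventually $0$) and $\underline{\underline{u}}_{0}\geq m_{0}>0$, so their ratio is bounded there; on $[\xi_{2},+\infty)$ we have $\underline{u}_{0}''=o(\underline{u}_{0})$, hence $|\underline{u}_{0}''|/\underline{u}_{0}$ is bounded near $+\infty$ and continuous (so bounded) on $[\xi_{2},+\infty)$; taking $K$ to be the larger of the two bounds finishes it.

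The main obstacle — really the only nontrivial bookkeeping — is constructing the interpolant on $[\xi_{2}-1,\xi_{2}]$ so that simultaneously (a) the $C^{2}$ matching at $\xi_{2}$ holds, (b) $\underline{\underline{u}}_{0}$ stays in $[m_{0},\underline{u}_{0}]$ throughout, and (c) it connects flatly to the constant $m_{0}$. The cleanest route: pick $\xi_{2}$ large enough (using $\lim_{+\infty}\underline{u}_{0}=0$ and the $(Q)$ estimates $\underline{u}_{0}'=o(\underline{u}_{0})$, $\underline{u}_{0}''=o(\underline{u}_{0})$, together with Proposition \ref{prop:ppties_Q}) so that $\underline{u}_{0}(\xi_{2})$ is already very small and $|\underline{u}_{0}'(\xi_{2})|$, $|\underline{u}_{0}''(\xi_{2})|$ are controlled; then the degree-$3$ (or smooth-cutoff) Hermite-type interpolant from $(\underline{u}_{0}(\xi_{2}),\underline{u}_{0}'(\xi_{2}),\underline{u}_{0}''(\xi_{2}))$ down to $(m_{0},0,0)$ over a unit interval has $C^{2}$ norm controlled by these data, and choosing $m_{0}\le \underline{u}_{0}(\xi_{2})$ small keeps the interpolant squeezed below $\underline{u}_{0}$ (which is $\ge \underline{u}_{0}(\xi_2)$ for $x\le\xi_2$ by monotonicity, once $\xi_2\ge\xi_0$) and above $m_{0}$. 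Everything else is routine verification against the four bullet points and the clauses of $(Q)$.
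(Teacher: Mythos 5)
Your overall strategy is sound and genuinely different from the paper's: you glue a flat tail onto $\underline{u}_{0}$ on a left half-line via a direct $C^{2}$ interpolation, whereas the paper post-composes $\underline{u}_{0}$ with a fixed concave, nondecreasing $C^{2}$ cap $\phi\colon\mathbb{R}_{+}\to\mathbb{R}_{+}$ satisfying $\phi(s)=s$ for $s\leq\underline{u}_{0}(\xi_{2})$ and $\phi\equiv m<\underline{u}_{0}(\xi_{1})$ for $s\geq\underline{u}_{0}(\xi_{1})$, and then sets $\underline{\underline{u}}_{0}\coloneqq\phi\circ\underline{u}_{0}$. The paper's trick pays off in two ways. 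First, concavity of $\phi$ gives $\phi(s)\leq s$ on $\mathbb{R}_{+}$ for free, hence $\underline{\underline{u}}_{0}\leq\underline{u}_{0}$ without any quantitative comparison on a transition zone. Second, since $\underline{u}_{0}\geq\underline{u}_{0}(\xi_{1})$ on $(-\infty,\xi_{1}]$ (the first assertion of Proposition \ref{prop:ppties_Q}), the composition is literally constant there, so global $C^{2}$ regularity follows even though $\underline{u}_{0}$ itself is only assumed $C^{2}$ on $[\xi_{0},+\infty)$.

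The one spot where your sketch has a real hole is the squeeze claim for the interpolant. If you use a Hermite-type interpolant on $[\xi_{2}-1,\xi_{2}]$ with endpoint data $(m_{0},0,0)$ at $\xi_{2}-1$ and $(\underline{u}_{0}(\xi_{2}),\underline{u}_{0}'(\xi_{2}),\underline{u}_{0}''(\xi_{2}))$ at $\xi_{2}$, then whenever $\underline{u}_{0}'(\xi_{2})<0$ the interpolant \emph{must} rise strictly above $\underline{u}_{0}(\xi_{2})$ somewhere in the open interval (it goes from $m_{0}<\underline{u}_{0}(\xi_{2})$ up to $\underline{u}_{0}(\xi_{2})$ while being decreasing at the right endpoint). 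So "stays below $\underline{u}_{0}$" is not a consequence of taking $m_{0}$ small; it requires comparing the overshoot (controlled by $|\underline{u}_{0}'(\xi_{2})|$, $|\underline{u}_{0}''(\xi_{2})|$) against the headroom $\underline{u}_{0}(x)-\underline{u}_{0}(\xi_{2})$, which you have no lower bound for. This is exactly the bookkeeping you acknowledge as "the only nontrivial" part, but it is not resolved by what you wrote. The cleanest fix, which you gesture at but do not carry out, is to drop the Hermite ansatz and take the convex combination $\underline{\underline{u}}_{0}\coloneqq\chi\,\underline{u}_{0}+(1-\chi)m_{0}$ with a fixed nondecreasing $C^{2}$ cutoff $\chi$ equal to $0$ on $(-\infty,\xi_{2}-1]$ and to $1$ on $[\xi_{2},+\infty)$ with $\chi'(\xi_{2})=\chi''(\xi_{2})=0$: since $m_{0}\leq\inf_{(-\infty,\xi_{2}]}\underline{u}_{0}$, the convex combination is automatically squeezed into $[m_{0},\underline{u}_{0}]$, the $C^{2}$ match at $\xi_{2}$ is immediate, and the bound $|\underline{\underline{u}}_{0}''|\leq K\underline{\underline{u}}_{0}$ then follows from $\underline{\underline{u}}_{0}\geq m_{0}>0$ on $(-\infty,\xi_{2}]$ together with the $(Q)$ bound $|\underline{u}_{0}''|\leq\underline{u}_{0}$ on $[\xi_{2},+\infty)$ (taking $\xi_{2}$ large enough). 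With that substitution your proof closes; as written, the squeeze is asserted rather than established.
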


\begin{proof}
[Proof of Lemma \ref{lem:gw_borne_inf}]From Proposition \ref{prop:ppties_Q},
there exists $\xi_{1}>\xi_{0}$ so that $\underline{u}_{0}(X)\geq\underline{u}_{0}(\xi_{1})$
for any $X\leq\xi_{1}$. Fix $h\in(0,\xi_{1}-\xi_{0})$.

Since $\underline{u}_{0}$ satisfies $(Q)$, there is $\xi_{2}>\xi_{1}$
such that $\underline{u}_{0}(\xi_{2})<\underline{u}_{0}(\xi_{1})$
and $|\underline{u}_{0}^{\prime\prime}(X)|\leq\underline{u}_{0}(X)$
for all $X\geq\xi_{2}$. Next, one can construct a nondecreasing,
concave function $\phi\colon\mathbb{R}_{+}\rightarrow\mathbb{R}_{+}$
of class $C^{2}$ satisfying

\[
\phi(x)=\begin{cases}
x, & \forall x\leq\underline{u}_{0}(\xi_{2}),\\
m, & \forall x\geq\underline{u}_{0}(\xi_{1}),
\end{cases}
\]
for some $m<\underline{u}_{0}(\xi_{1})$. Finally, set $\underline{\underline{u}}_{0}=\phi\circ\underline{u}_{0}$.
Let us prove that $\underline{\underline{u}}_{0}$ satisfies all the
desired properties.

Given that $\phi$ is concave, we have $\phi(x)\leq x$ on $\mathbb{R}_{+}$,
thus $\underline{\underline{u}}_{0}\leq\underline{u}_{0}$ on $\mathbb{R}$.
Meanwhile, for any $X\geq\xi_{2}$, there holds $\underline{u}_{0}(X)\leq\underline{u}_{0}(\xi_{2})$,
which implies $\underline{\underline{u}}_{0}(X)=\underline{u}_{0}(X)$.
Furthermore, the function $\underline{\underline{u}}_{0}$ is of class
$C^{2}$ on $\mathbb{R}$. Indeed, by composition $\underline{\underline{u}}_{0}$
is $C^{2}$ on $(\xi_{1}-h,+\infty)$, whereas on $(-\infty,\xi_{1}]$,
$\underline{\underline{u}}_{0}$ is constant from our choice of $\xi_{1}$.

Let us check that the function $\underline{\underline{u}}_{0}$, which
is clearly bounded, satisfies condition $(Q)$, see Definition \ref{def:condition_Q}.
Given that $\phi$ and $\underline{u}_{0}$ are uniformly continuous,
so is $\underline{\underline{u}}_{0}$. Then, since $\underline{\underline{u}}_{0}=\underline{u}_{0}$
on $[\xi_{2},+\infty)$ and $\underline{u}_{0}$ satisfies $(Q)$,
we collect for free the properties corresponding to the third and
fourth lines of condition $(Q)$, as well as $\lim_{+\infty}\underline{\underline{u}}_{0}=0$.
Meanwhile, $\phi>0$ leads to $\underline{\underline{u}}_{0}>0$.
Eventually, given our choice of $\xi_{1}$, one deduces $\liminf_{-\infty}\underline{u}_{0}\geq\underline{u}_{0}(\xi_{1})$,
whence $\liminf_{-\infty}\underline{\underline{u}}_{0}=m>0$. As a
result, $\underline{\underline{u}}_{0}$ does satisfy $(Q)$.

It remains to prove the existence of a real $K\geq0$ such that $|\underline{\underline{u}}_{0}^{\prime\prime}|\leq K\underline{\underline{u}}_{0}$
on $\mathbb{R}$. From our choice of $\xi_{2}$, there holds
\[
|\underline{\underline{u}}_{0}^{\prime\prime}(X)|\begin{cases}
=0<\underline{\underline{u}}_{0}(X) & \forall X\leq\xi_{1},\\
=|\underline{u}_{0}^{\prime\prime}(X)|\leq\underline{u}_{0}(X)=\underline{\underline{u}}_{0}(X) & \forall X\geq\xi_{2}.
\end{cases}
\]
Meanwhile, $\underline{\underline{u}}_{0}$ is $C^{2}$ and positive
on $[\xi_{1},\xi_{2}]$, thus there exists $K^{\prime}\geq0$ such
that for any $X$ in $[\xi_{1},\xi_{2}]$
\[
|\underline{\underline{u}}_{0}^{\prime\prime}(X)|\leq K^{\prime}\min_{[\xi_{1},\xi_{2}]}\underline{\underline{u}}_{0}\leq K^{\prime}\underline{\underline{u}}_{0}(X).
\]
Thus it suffices to choose $K=\max(1,K^{\prime})$.
\end{proof}
We now turn to the derivation of estimate \eqref{eq:___intn_inf}.
\begin{proof}
[Proof of \eqref{eq:___intn_inf}]The proof involves three steps.
First we construct a subsolution $\underline{v}$ on $[0,1]\times \mathbb{R}\times [\sigma_- -\alpha,\sigma_+ +\alpha]$, which, for $\alpha>0$ large enough, provides a lower bound of the form $v(1,X,Y)\geq \rho \,\underline{\underline{u}}_{0}(X)\Gamma_{0}^{R}(Y)$
for some $\rho>0$, where $\underline{\underline{u}}_{0}$
is constructed from $\underline{u}_{0}$ as in Lemma \ref{lem:gw_borne_inf}.

Next, equipped with this lower bound provided by $\underline{v}$,
we construct a second subsolution $\underline{w}$ on $[1,+\infty)\times\mathbb{R}^{2}$,
which spreads and accelerates in the direction $X\rightarrow+\infty$.
Let us recall that due to the nonlocal term, equation \eqref{eq:dim_2_xy}
does not satisfy the comparison principle. Thus, to prove that $v(t,\cdot,\cdot)\geq\underline{w}(t,\cdot,\cdot)$
for $t\geq1$, we invoke a refinement of the parabolic Harnack inequality,
that is Theorem \ref{thm:Harnack_gen}.

The last step consists in transferring the estimates on the level
sets of $\underline{w}$ into estimates on $E_{\mu}^{n}(t)$, similarly
to the proof of \eqref{eq:___intn_sup} in subsection \ref{subsec:Upper_bound}.

\paragraph{First subsolution.}Let $\alpha>0$ large enough so that
$[-R,R]\subset(\sigma_{-}-\alpha,\sigma_{+}+\alpha)$. Let $p(t,Y)$
be the solution of the initial boundary value problem
\[
\begin{cases}
\partial_{t}p-\partial_{YY}p=0, & t\in(0,1],\text{ }Y\in(\sigma_{-}-\alpha ,\sigma_{+}+\alpha ),\\
p(t,Y)=0, & t\in(0,1],\text{ }Y=\sigma_{\pm}\pm\alpha,\\
p(0,Y)=p_{0}(Y), & Y\in(\sigma_{-},\sigma_{+}),
\end{cases}
\]
with $p_{0}$ the quadratic polynomial that satisfies $p_{0}(\sigma_{\pm})=0$
and $p_{0}\left(\frac{\sigma_{+}+\sigma_{-}}{2}\right)=1$. In other
words, $p(t,Y)$ solves the one-dimensional heat equation on $[0,1]\times [\sigma_- -\alpha, \sigma_+ + \alpha]$, with zero Dirichlet
conditions imposed on the boundary. Let us recall that $N_{\infty}$,
$C$ and $\kappa$ are positive real numbers such that \eqref{eq:maj_intn}
and \eqref{eq:maj_n} hold. Moreover, since $r\in L_{loc}^{\infty}(\mathbb{R})$,
there exists $r_{min}\leq0$ such that $r(Y)\geq r_{min}$ for any
$Y\in[\sigma_{-}-\alpha,\sigma_{+}+\alpha]$. We define the following
subsolution
\begin{equation}
\underline{v}(t,X,Y)\coloneqq e^{-kt}\underline{\underline{u}}_{0}(X)p(t,Y),\label{eq:___subsol_1}
\end{equation}
\begin{equation}
k\coloneqq K-r_{min}+N_{\infty},
\end{equation}
\begin{equation}
\Omega\coloneqq\left\{ (t,X,Y)\mid0<t<1,\text{ }X\in\mathbb{R},\text{ }Y\in(\sigma_{-}-\alpha,\sigma_{+}+\alpha)\right\} ,\label{eq:___subsol_1_dom}
\end{equation}
where $\underline{\underline{u}}_{0}$ is constructed from $\underline{u}_{0}$
as in Lemma \ref{lem:gw_borne_inf}, with the associated constant
$K\geq0$, so that $k\geq N_{\infty}>0$. 

Let us prove that $v\geq\underline{v}$ on $\overline{\Omega}$. We
first check that $v\leq\underline{v}$ on the parabolic boundary of
$\Omega$, that is
\[
\partial_{p}\Omega=\Bigl(\{0\}\times\mathbb{R}\times[\sigma_{-},\sigma_{+}]\Bigr)\bigcup D_{+}\bigcup D_{-},
\]
where 
\[
D_{\pm}\coloneqq\bigl\{(t,X,\sigma_{\pm}\pm\alpha )\mid t\in(0,1],X\in\mathbb{R}\bigr\}.
\]
On the one hand, it follows from \eqref{eq:encad_v0} that $v_{0}(X,Y)\geq\underline{v}(0,X,Y)$
for $X\in\mathbb{R}$ and $Y\in[\sigma_{-},\sigma_{+}]$. On the other
hand, on $D_{+}\cup D_{-}$, one has $\underline{v}=0\leq v$. Thus
$v\geq\underline{v}$ on $\partial_{p}\Omega$. It remains to show
that $v-\underline{v}$ is a supersolution of a parabolic problem
on $\Omega$. First, for $t\in(0,1)$, there holds
\begin{align*}
\partial_{t}v-\partial_{XX}v-\partial_{YY}v & =r(Y)v-v\int_{\mathbb{R}}v(t,\chi,\psi)dy\\
 & \geq(r_{min}-N_{\infty})v.
\end{align*}
Meanwhile, since $|\underline{\underline{u}}_{0}^{\prime\prime}|\leq K\underline{\underline{u}}_{0}$,
one obtains 
\begin{align*}
\partial_{t}\underline{v}-\partial_{XX}\underline{v}-\partial_{YY}\underline{v} & =e^{-kt}\left[-k\underline{\underline{u}}_{0}p-\underline{\underline{u}}_{0}^{\prime\prime}p\right]\\
 & =e^{-kt}\left[(r_{min}-N_{\infty})\underline{\underline{u}}_{0}p+(-K\underline{\underline{u}}_{0}-\underline{\underline{u}}_{0}^{\prime\prime})p\right]\\
 & \leq(r_{min}-N_{\infty})\underline{v}.
\end{align*}
We conclude by the maximum principle that $v\geq\underline{v}$ on
$\overline{\Omega}$. We have thus completed the first step of the
proof.

\paragraph{Second subsolution.}We now turn to the construction of
a second subsolution. The maximum principle shows that $p(1,Y)$ is
positive on $(\sigma_{-}-\alpha,\sigma_{+}+\alpha)$. Since $[-R,R]\subset(\sigma_{-}-\alpha,\sigma_{+}+\alpha)$, there
exists $p_{min}>0$ so that $p(1,Y)\geq p_{min}$ for any $Y\in[-R,R]$.
Thus, for any $X\in\mathbb{R}$ and $Y\in[-R,R]$, there holds
\begin{align*}
v(1,X,Y) & \geq\underline{v}(1,X,Y)\\
 & \geq e^{-k}p_{min}\underline{\underline{u}}_{0}(X)\\
 & \geq e^{-k}p_{min}\underline{\underline{u}}_{0}(X)\Gamma_{0}^{R}(Y),
\end{align*}
where $\Gamma_{0}^{R}$ solves \eqref{eq:Eigen_R}. Fix now any real
number $\rho>0$ small enough so that $\rho<||\underline{\underline{u}}_{0}||_{\infty}e^{-k}p_{min}$.
We allow ourselves to take $\rho$ even smaller if needed. Set $\Lambda_{\varepsilon}^{R}=-\lambda_{0}^{R}-\varepsilon/2>0$.
For any $t\geq1$ and $X,Y\in\mathbb{R}$, define
\[
\underline{w}(t,X,Y)\coloneqq\rho u(t,X)\Gamma_{0}^{R}(Y),
\]
where $u(t,X)$ solves the Fisher-KPP equation
\[
\begin{cases}
u_{t}-u_{XX}=\Lambda_{\varepsilon}^{R}u(1-u), & t>1,\text{ }X\in\mathbb{R},\\
u(1,X)=\underline{\underline{u}}_{0}(X)/||\underline{\underline{u}}_{0}||_{\infty}, & X\in\mathbb{R}.
\end{cases}
\]
In particular, $u(t,X)\in[0,1]$ thanks to the maximum principle.

We shall now prove that $v(t,X,Y)>\underline{w}(t,X,Y)$ on $[1,+\infty)\times\mathbb{R}^{2}$.
Given our choice of $\rho$, we indeed have $v(1,X,Y)>\underline{w}(1,X,Y)$
on $\mathbb{R}^{2}$. Assume by contradiction that the closed set
\[
E=\bigl\{ t>1\mid\exists(X,Y)\in\mathbb{R}^{2},\quad v(t,X,Y)=\underline{w}(t,X,Y)\bigr\},
\]
is nonempty. Set $t_{0}\coloneqq\min E>1$ and $(X_{0},Y_{0})\in\mathbb{R}^{2}$
the point where $v(t_{0},X_{0},Y_{0})=\underline{w}(t_{0},X_{0},Y_{0})$.
Note that this implies $Y_{0}\in(-R,R)$, since the maximum principle
yields $v(t,\cdot,\cdot)>0$ for any $t>0$. Before going further,
we first use Theorem \ref{thm:Harnack_gen} to estimate the nonlocal
term in \eqref{eq:dim_2_XY}. Let $(x_{0},y_{0})$ be the corresponding
coordinates of $(X_{0},Y_{0})$ obtained through the change of variable
\eqref{eq:XY_eq_xy}. Fix $M>0$, large enough so that $\frac{3C}{\kappa}e^{-\kappa M}\leq\varepsilon/8$
and $|y_{0}-Bx_{0}|=\sqrt{1+B^{2}}|Y_{0}|\leq M$. Thanks to the control
of the tails \eqref{eq:maj_n}, there holds
\begin{align}
\int_{\mathbb{R}}v(t_{0},\chi(X_{0},Y_{0},y),\psi(X_{0},Y_{0},y))dy & =\int_{\mathbb{R}}v\left(t_{0},\frac{\frac{X_{0}-BY_{0}}{\sqrt{1+B^{2}}}+By}{\sqrt{1+B^{2}}},\frac{-B\frac{X_{0}-BY_{0}}{\sqrt{1+B^{2}}}+y}{\sqrt{1+B^{2}}}\right)dy\nonumber \\
 & =\int_{\mathbb{R}}n(t_{0},x_{0},y)dy=\int_{\mathbb{R}}n(t_{0},x_{0},Bx_{0}+y)dy\nonumber \\
 & \leq2M\max_{y\in[-M,M]}n(t_{0},x_{0},Bx_{0}+y)+\int_{[-M,M]^{c}}Ce^{-\kappa|y|}dy.\label{eq:___maj_intv}
\end{align}

Next, in order to estimate the first term of \eqref{eq:___maj_intv},
let us recall that the solutions are uniformly bounded, as implied
by \eqref{eq:maj_n}. This allows us to use the refinement of the
Harnack inequality, namely Theorem \ref{thm:Harnack_gen}, with $\delta=\frac{C}{2M\kappa}e^{-\kappa M}>0$.
Thus there exists a constant $C_{M}>0$ such that
\begin{align*}
\max_{(x,y)\in[-M,M]^{2}}n(t_{0},x_{0}+x,Bx_{0}+y) & \leq C_{M}\min_{(x,y)\in[-M,M]^{2}}n(t_{0},x_{0}+x,Bx_{0}+y)+\delta\\
 & \leq C_{M}n(t_{0},x_{0},y_{0})+\delta,
\end{align*}
which we plug into \eqref{eq:___maj_intv} to obtain
\[
\int_{\mathbb{R}}v\left(t_{0},\frac{\frac{X_{0}-BY_{0}}{\sqrt{1+B^{2}}}+By^{\prime}}{\sqrt{1+B^{2}}},\frac{-B\frac{X_{0}-BY_{0}}{\sqrt{1+B^{2}}}+y^{\prime}}{\sqrt{1+B^{2}}}\right)dy^{\prime}\leq2MC_{M}v(t_{0},X_{0},Y_{0})+\frac{3C}{\kappa}e^{-\kappa M}.
\]

Going back to our proof by contradiction, since $(\underline{w}-v)$
is negative on $[1,t_{0})\times\mathbb{R}^{2}$, it reaches its maximum
on $[1,t_{0}]\times\mathbb{R}^{2}$ at the point $(t_{0},X_{0},Y_{0})$.
Thus
\begin{equation}
\bigl[\partial_{t}(\underline{w}-v)-\partial_{XX}(\underline{w}-v)-\partial_{YY}(\underline{w}-v)-r(Y_{0})(\underline{w}-v)\bigr](t_{0},X_{0},Y_{0})\geq0,\label{eq:___ineg_absurde}
\end{equation}
On the one hand, there holds
\begin{align*}
-\left[\partial_{t}v-\partial_{XX}v-\partial_{YY}v-r(Y)v\right](t_{0},X_{0},Y_{0}) & =v(t_{0},X_{0},Y_{0})\int_{\mathbb{R}}v(t_{0},\chi,\psi)dy\\
 & \leq2MC_{M}\underline{w}(t_{0},X_{0},Y_{0})^{2}+\frac{3C}{\kappa}e^{-\kappa M}\underline{w}(t_{0},X_{0},Y_{0}).
\end{align*}
On the other hand, one has
\begin{align*}
\partial_{t}\underline{w}-\partial_{XX}\underline{w}-\partial_{YY}\underline{w}-r(Y)\underline{w} & =\rho\left(\partial_{t}u-\partial_{XX}u+\lambda_{0}^{R}u\right)\Gamma_{0}^{R}\\
 & \leq\rho\left(\Lambda_{\varepsilon}^{R}u+\lambda_{0}^{R}u\right)\Gamma_{0}^{R}\\
 & \leq-\frac{\varepsilon}{2}\underline{w}.
\end{align*}
Thus, \eqref{eq:___ineg_absurde} leads to
\begin{align*}
0 & \leq-\frac{\varepsilon}{2}\underline{w}(t_{0},X_{0},Y_{0})+2MC_{M}\underline{w}(t_{0},X_{0},Y_{0})^{2}+\frac{3C}{\kappa}e^{-\kappa M}\underline{w}(t_{0},X_{0},Y_{0})\\
 & \leq\left[-\frac{\varepsilon}{2}+2MC_{M}\rho+\frac{3C}{\kappa}e^{-\kappa M}\right]\underline{w}(t_{0},X_{0},Y_{0})\\
 & \leq\left[-\frac{\varepsilon}{2}+\frac{\varepsilon}{8}+\frac{\varepsilon}{8}\right]\underline{w}(t_{0},X_{0},Y_{0}),
\end{align*}
provided we select $\rho$ small enough so that $2MC_{M}\rho\leq\varepsilon/8$.
Note that reducing $\rho$ may change the values of $t_{0},X_{0},Y_{0}$
but all above estimates remain true since there always holds $Y_{0}\in(-R,R)$
and $C_{M}$ does not depend on $t_{0}\geq1$. In the end, one has
$-\varepsilon\underline{w}(t_{0},X_{0},Y_{0})/4\geq0$. This implies
$\underline{w}(t_{0},X_{0},Y_{0})\leq0$, which is absurd. As a result,
$\underline{w}(t,X,Y)<v(t,X,Y)$ for any $t\geq1$ and $X,Y\in\mathbb{R}$.

\paragraph{Conclusion.}Before going further, let us mention that,
for any $t>1$ and $X\in\mathbb{R}$, there holds $u(t,X)<1$, thus
\begin{align*}
\int_{\mathbb{R}}w(t,X,Y)dy & <\rho\int_{\mathbb{R}}\Gamma_{0}^{R}(Y)dy=\rho\sqrt{1+B^{2}}\int_{\mathbb{R}}\Gamma_{0}^{R}(y)dy\eqqcolon\beta.
\end{align*}
Therefore, the lower bound $v(t,X,Y)\geq\underline{w}(t,X,Y)$ on
$[1,+\infty)\times\mathbb{R}^{2}$ does not provide any information
on the location of $E_{\mu}^{n}(t)$ for levels $\mu\geq\beta$. As
a result the location of larger levels are seemingly out of reach,
which is typical of equations without comparison principle, as already
mentioned in the introduction and subsection \ref{subsec:Main_results}.

Now, given any $\mu\in(0,\beta)$, one can select $\eta=\eta(\mu)\in(0,1)$
such that $\eta>\mu/\beta$. Since $v\geq\underline{w}$ on $[1,+\infty)\times\mathbb{R}^{2}$,
there holds for any $t\geq1$ 
\begin{align*}
\int_{\mathbb{R}}n(t,x,y)dy & \geq\int_{\mathbb{R}}\underline{w}(t,X,Y)dy\\
 & \geq\int_{\mathbb{R}}\rho u(t,X)\Gamma_{0}^{R}(Y)dy\\
 & \geq\rho\int_{\mathbb{R}}u\left(t,\frac{x+By}{\sqrt{1+B^{2}}}\right)\Gamma_{0}^{R}\left(\frac{y-Bx}{\sqrt{1+B^{2}}}\right)dy\\
 & \geq\rho\sqrt{1+B^{2}}\int_{\mathbb{R}}u(t,\sqrt{1+B^{2}}x+Bs)\Gamma_{0}^{R}(s)ds\\
 & \geq\rho\sqrt{1+B^{2}}\int_{-R}^{R}u(t,\sqrt{1+B^{2}}x+Bs)\Gamma_{0}^{R}(s)ds.
\end{align*}

We will show that for any
\[
x\leq\frac{1}{\sqrt{1+B^{2}}}\min\underline{u}_{0}^{-1}\left(\Gamma e^{-(\Lambda_{\varepsilon}^{R}-\varepsilon/2)t}\right),
\]
and for $t$ large enough there holds $u(t,\sqrt{1+B^{2}}x+Bs)\geq\eta$
for $|s|\leq R$. For now, let us only assume the condition on $x$.
By applying Proposition \ref{prop:ppties_Q} to $\underline{u}_{0}$
with
\[
\begin{cases}
a=\Lambda_{\varepsilon}^{R}-\varepsilon/2, & b=\Lambda_{\varepsilon}^{R}-\varepsilon/4,\\
\Gamma_{a}=\Gamma, & \Gamma_{b}=\Gamma,\\
\chi=BR,
\end{cases}
\]
we deduce that there exists $t_{a,b,\Gamma_{a},\Gamma_{b},\chi}^{\ast}=t_{\Gamma,R,\varepsilon}^{\ast}\geq0$
such that for any $s\in[-R,R]$ and $t\geq t_{\Gamma,R,\varepsilon}^{\ast}$,
there holds
\begin{align*}
\sqrt{1+B^{2}}x+Bs & \leq\min\underline{u}_{0}^{-1}\left(\Gamma e^{-(\Lambda_{\varepsilon}^{R}-\varepsilon/2)t}\right)+Bs\\
 & \leq\min\underline{u}_{0}^{-1}\left(\Gamma e^{-(\Lambda_{\varepsilon}^{R}-\varepsilon/2)t}\right)+BR\\
 & \leq\min\underline{u}_{0}^{-1}\left(\Gamma e^{-(\Lambda_{\varepsilon}^{R}-\varepsilon/4)t}\right).
\end{align*}
Since $\eta\in(0,1)$, Lemma \ref{lem:HR_gen} gives the existence
of a real $T_{\eta,\varepsilon/4,\Gamma,\Lambda_{\varepsilon}^{R}}\geq0$
such that 
\[
\min E_{\eta}(t)\geq\min\underline{\underline{u}}_{0}^{-1}\left(\Gamma e^{-(\Lambda_{\varepsilon}^{R}-\varepsilon/4)t}\right),\qquad\forall t\geq T_{\eta,\varepsilon/4,\Gamma,\Lambda_{\varepsilon}^{R}},
\]
where $E_{\eta}(t)=\{x\in\mathbb{R}\mid u(t,x)=\eta\}$. However,
Lemma \ref{lem:gw_borne_inf} provides some $\xi_{2}\in\mathbb{R}$
such that $\underline{\underline{u}}_{0}(X)=\underline{u}_{0}(X)$
for $X\geq\xi_{2}$, and since $\underline{\underline{u}}_{0}$ satisfies
$(Q)$ (see Definition \ref{def:condition_Q}), one can take $T_{\eta,\varepsilon/4,\Gamma,\Lambda_{\varepsilon}^{R}}$
possibly even larger so that $\min\underline{\underline{u}}_{0}^{-1}\left(\Gamma e^{-(\Lambda_{\varepsilon}^{R}-\varepsilon/4)t}\right)\geq\xi_{2}$.
Also, since $\underline{\underline{u}}_{0}\leq\underline{u}_{0}$,
there holds $\min\underline{u}_{0}^{-1}\left(\Gamma e^{-(\Lambda_{\varepsilon}^{R}-\varepsilon/4)t}\right)\geq\xi_{2}$,
so that
\[
\min\underline{\underline{u}}_{0}^{-1}\left(\Gamma e^{-(\Lambda_{\varepsilon}^{R}-\varepsilon/4)t}\right)=\min\underline{u}_{0}^{-1}\left(\Gamma e^{-(\Lambda_{\varepsilon}^{R}-\varepsilon/4)t}\right),\qquad\forall t\geq T_{\eta,\varepsilon/4,\Gamma,\Lambda_{\varepsilon}^{R}}.
\]
Additionally, seeing that $\inf_{X\leq0}u(t,X)\rightarrow1$ as $t\rightarrow+\infty$,
there is $\boldsymbol{t}_{\eta,R,\varepsilon}\geq0$ such that 
\[
\liminf_{X\rightarrow-\infty}u(t,X)>\eta,\qquad\forall t\geq\boldsymbol{t}_{\eta,R,\varepsilon}.
\]
Finally, set
\begin{align*}
T_{\eta,\varepsilon,\Gamma,R}^{\ast} & \coloneqq\max(t_{\Gamma,R,\varepsilon}^{\ast},\,T_{\eta,\varepsilon/4,\Gamma,\Lambda_{\varepsilon}^{R}},\,\boldsymbol{t}_{\eta,R,\varepsilon}),
\end{align*}
then for any $x\leq\frac{1}{\sqrt{1+B^{2}}}\min\underline{u}_{0}^{-1}\left(\Gamma e^{-(\Lambda_{\varepsilon}^{R}-\varepsilon/2)t}\right)$
and $t\geq T_{\eta,\varepsilon,\Gamma,R}^{\ast}$, there holds
\begin{align}
\min E_{\eta}(t) & \geq\min\underline{u}_{0}^{-1}\left(\Gamma e^{-(\Lambda_{\varepsilon}^{R}-\varepsilon/4)t}\right)\nonumber \\
 & \geq\sqrt{1+B^{2}}x+BR.\label{eq:___ls_min}
\end{align}
Thus for any $s\in(-R,R)$, one has $u(t,\sqrt{1+B^{2}}x+Bs)>\eta$.
Indeed, assume by contradiction that there exists $s_{0}\in(-R,R)$
such that $u(t,\sqrt{1+B^{2}}x+Bs_{0})\leq\eta$. Given that $t\geq\boldsymbol{t}_{\eta,R,\varepsilon}$,
there would exist $x_{1}\in E_{\eta}(t)\cap(-\infty,\sqrt{1+B^{2}}x+Bs_{0})$,
which is absurd considering \eqref{eq:___ls_min}.

Consequently, whenever $t\geq T_{\eta,\varepsilon,\Gamma,R}^{\ast}$
and $x\leq\frac{1}{\sqrt{1+B^{2}}}\min\underline{u}_{0}^{-1}\left(\Gamma e^{-(\Lambda_{\varepsilon}^{R}-\varepsilon/2)t}\right)$,
there holds 
\begin{align*}
\int_{\mathbb{R}}n(t,x,y)dy & \geq\rho\sqrt{1+B^{2}}\int_{-R}^{R}\eta\Gamma_{0}^{R}(s)ds\\
 & \geq\eta\beta>\mu.
\end{align*}
Since $\Lambda_{\varepsilon}^{R}-\varepsilon/2=-\lambda_{0}^{R}-\varepsilon$,
this concludes the proof of \eqref{eq:___intn_inf}.
\end{proof}

\subsection{Conclusion\label{subsec:Proof_conc}}
\begin{proof}
[Proof of Theorem \ref{thm:gw_main}]Let us recall that \eqref{eq:___intn_sup}-\eqref{eq:___intn_inf}
have been established in subsections \ref{subsec:Upper_bound} and
\ref{subsec:Lower_bound} respectively. We can now complete the proof
of Theorem \ref{thm:gw_main}. Set
\[
N_{t}(x)\coloneqq\int_{\mathbb{R}}n(t,x,y)dy,
\]
for any $t\geq0$ and $x\in\mathbb{R}$. Set $T_{\mu,\varepsilon,\gamma,\Gamma,R}^{\ast}\coloneqq\max(T_{\mu,\varepsilon,\gamma}^{\ast},T_{\mu,\varepsilon,\Gamma,R}^{\ast})$.
One can reformulate \eqref{eq:___intn_sup}-\eqref{eq:___intn_inf}
as follows~: there exists $\beta>0$ so that for any $\mu\in(0,\beta)$,
$\varepsilon\in(0,-\lambda_{0}^{R})$, $\Gamma>0$ and $\gamma>0$,
there exists $T_{\mu,\varepsilon,\gamma,\Gamma,R}^{\ast}\geq0$ such
that for all $t\geq T_{\mu,\varepsilon,\gamma,\Gamma,R}^{\ast}$,
there holds

\[
N_{t}(x)<\mu,\qquad\forall x\geq\frac{1}{\sqrt{1+B^{2}}}\max\overline{u}_{0}^{-1}\left(\gamma e^{-(-\lambda_{0}+\varepsilon)t}\right),
\]
\[
N_{t}(x)>\mu,\qquad\forall x\leq\frac{1}{\sqrt{1+B^{2}}}\min\underline{u}_{0}^{-1}\left(\Gamma e^{-(-\lambda_{0}^{R}-\varepsilon)t}\right),
\]
Since $N_{t}$ is continuous for any $t>0$, and $E_{\mu}^{n}(t)=N_{t}^{-1}(\mu)$,
the level set $E_{\mu}^{n}(t)$ is nonempty, closed, and included
in 
\[
\frac{1}{\sqrt{1+B^{2}}}\left[\min\underline{u}_{0}^{-1}\left(\Gamma e^{-(-\lambda_{0}^{R}-\varepsilon)t}\right),\max\overline{u}_{0}^{-1}\left(\gamma e^{-(-\lambda_{0}+\varepsilon)t}\right)\right],
\]
for any $t\geq T_{\mu,\varepsilon,\gamma,\Gamma,R}^{\ast}$. This
concludes the proof of Theorem \ref{thm:gw_main}.
\end{proof}

\subsection{A heavy tail induces acceleration\label{subsec:HT_accel_proof}}

In this short subsection, we give a sketch of the proof of Theorem
\ref{thm:gw_HT_accel}, as it follows the same lines as the proof
of \eqref{eq:___intn_inf} done in subsection \ref{subsec:Lower_bound}. 

By assumption, there holds $\lambda_{0}<0$. Select $R>0$ large enough
so that $\lambda_{0}^{R}<0$ (see subsection \ref{subsec:Eigen_intro}).
Choose any $c>2\sqrt{-\lambda_{0}^{R}}$ and set $\alpha_{c}\in(0,\sqrt{-\lambda_{0}^{R}})$
the only real satisfying $c=\alpha_{c}+\frac{-\lambda_{0}^{R}}{\alpha_{c}}$.
Since there exists $m>0$ such that $\underline{u}_{0}(X)\geq\min(m,e^{-\alpha_{c}X})$,
we can construct, as in the proof of Lemma \ref{lem:gw_borne_inf},
a $C^{2}$ function $\underline{\underline{u}}_{0}\colon\mathbb{R}\rightarrow[0,1]$
that satisfies 
\begin{itemize}
\item $\underline{\underline{u}}_{0}\leq\underline{u}_{0}$,
\item $\underline{\underline{u}}_{0}$ is asymptotically front-like, i.e.
satisfies \eqref{eq:asymp_frontL},
\item $\underline{\underline{u}}_{0}(X)=e^{-\alpha_{c}X}$ for $X$ large
enough,
\item there exists $K\geq0$ such that $|\underline{\underline{u}}_{0}^{\prime\prime}|\leq K\underline{\underline{u}}_{0}$
on $\mathbb{R}$.
\end{itemize}
Then, select $\alpha>0$ large enough so that $[-R,R]\subset(\sigma_{-}-\alpha,\sigma_{+}+\alpha)$.
Set $\underline{v}$ as in \eqref{eq:___subsol_1}-\eqref{eq:___subsol_1_dom}.
From \eqref{eq:2D_heavy_tail}, the maximum principle allows us to
conclude that $v\geq\underline{v}$ on $[0,1]\times\mathbb{R}^{2}$.

Next, as in subsection \ref{subsec:Lower_bound}, we can construct
a second subsolution on $[1,+\infty)\times\mathbb{R}^{2}$ of the
form $\underline{w}(t,X,Y)=\rho u(t,X)\Gamma_{0}^{R}(Y)$ for some
$\rho>0$, where $\Gamma_{0}^{R}$ solves \eqref{eq:Eigen_R} and
$u(t,X)$ solves 
\[
\begin{cases}
u_{t}-u_{XX}=-\lambda_{0}^{R}u(1-u), & t>1,\text{ }X\in\mathbb{R},\\
u(1,X)=\underline{\underline{u}}_{0}(X), & X\in\mathbb{R}.
\end{cases}
\]
However, since $\underline{\underline{u}}_{0}(X)$ decays as $e^{-\alpha_{c}X}$,
the function $u(t,X)$ converges to a shift of the front $\varphi_{c}(X-ct)$
solution of the Fisher-KPP equation $u_{t}-u_{XX}=-\lambda_{0}^{R}u(1-u)$,
see \cite{Uch_78}. As a consequence, with the same calculation as
in subsection \ref{subsec:Lower_bound}, we deduce the existence of
a level $\beta>0$ such that for any $\mu\in(0,\beta)$, there holds
\[
\liminf_{t\rightarrow+\infty}\left(\frac{1}{t}\min E_{\mu}^{n}(t)\right)\geq\frac{1}{\sqrt{1+B^{2}}}c,
\]
Since $c>2\sqrt{-\lambda_{0}^{R}}$ may be chosen arbitrarily large,
there holds $\min E_{\mu}^{n}(t)/t\rightarrow+\infty$ as $t\rightarrow+\infty$,
for all $\mu\in(0,\beta)$, leading to the result of Theorem \ref{thm:gw_HT_accel}.

\section{No acceleration for ill-directed heavy tails\label{sec:HTx}}

In contradistinction with Section \ref{sec:Main_proof}, we will keep
the notation $\tilde{r}(Y)=r(y-Bx)=r(\sqrt{1+B^{2}}Y)$, since the
two coordinate systems $(x,y)$ and $(X,Y)$ will be used in conjunction
during the proof.
\begin{proof}
[Proof of Theorem \ref{thm:gw_HTx}]We only give the proof for $c>c^{\ast}$,
as the case $c<-c^{\ast}$ is similar. We first set the following
positive constants
\[
s^{\ast}\coloneqq c^{\ast}\sqrt{1+B^{2}}=2\sqrt{-\lambda_{0}},\qquad\gamma\coloneqq\sqrt{-\lambda_{0}}.
\]
Fix any $s>s^{\ast}$, and define
\begin{align*}
\psi(t,x,y) & \coloneqq Ce^{-\gamma(X-st)}\Gamma_{0}(Y),\\
\varphi(t,x,y) & \coloneqq e^{-\alpha t}u(t,x)p(t,y),
\end{align*}
where $X,Y$ are given by \eqref{eq:XY_eq_xy}, $\alpha\geq\gamma^{2}\left(1+\frac{1}{B^{2}}\right)$
may be chosen arbitrarily large, $C>0$ is a positive constant to
be determined later, and the functions $u(t,x)$, $p(t,y)$ respectively
solve 
\begin{equation}
\begin{cases}
\partial_{t}u-\partial_{xx}u=u(||u_{0}||_{\infty}-u) & t>0,\,x\in\mathbb{R},\\
u(0,x)=u_{0}(x) & x\in\mathbb{R},
\end{cases}\label{eq:___HTx_u_eq}
\end{equation}
\begin{equation}
\begin{cases}
\partial_{t}p-\partial_{yy}p=0 & t>0,\,y\in\mathbb{R},\\
p(0,y)=\boldsymbol{1}_{[\sigma_{-},\sigma_{+}]}(y) & y\in\mathbb{R}.
\end{cases}\label{eq:___HTx_p_eq}
\end{equation}
Since $p$ solves the one-dimensional heat equation, it is expressed
as the convolution
\begin{equation}
p(t,y)=\frac{1}{\sqrt{4\pi t}}\int_{\sigma_{-}}^{\sigma_{+}}e^{-(y-z)^{2}/(4t)}dz.\label{eq:___p_convol}
\end{equation}
We shall prove that $\psi+\varphi\geq n$ using the maximum principle.
Notice that one clearly has $\psi(0,\cdot,\cdot)\geq0$, and $\varphi(0,x,y)\geq n_{0}(x,y)$
with \eqref{eq:borne_n0_HTx}. Also, since nonnegative, $n$ is a
subsolution of the linear local operator~:
\begin{align*}
\mathcal{L} & \coloneqq\partial_{t}-\partial_{xx}-\partial_{yy}-r(y-Bx),\\
 & =\partial_{t}-\partial_{XX}-\partial_{YY}-\tilde{r}(Y),
\end{align*}
thus it suffices to prove that $\mathcal{L}(\psi+\varphi)\geq0$ on
$(0,+\infty)\times\mathbb{R}^{2}$ to conclude. 

Given that $-\partial_{YY}\Gamma_{0}-\tilde{r}(Y)\Gamma_{0}(Y)=\lambda_{0}\Gamma_{0}(Y)$,
we have
\begin{align*}
\mathcal{L}\psi & =\left(\gamma s-\gamma^{2}+\lambda_{0}\right)\psi>0,
\end{align*}
since $\gamma s^{\ast}-\gamma^{2}+\lambda_{0}=0$. In particular,
$\psi$ is a supersolution.

Let us now turn our attention to the function $\varphi$. Since $r$
satisfies Assumption \ref{assu:func_r_k_n0}, there exists $Y_{0}>0$
such that $\tilde{r}(Y)\leq-\alpha$ whenever $|Y|\geq Y_{0}$. Set
\[
\Omega_{0}\coloneqq\left\{ (x,y)\in\mathbb{R}^{2}\;\biggl|\;|Y|=\frac{1}{\sqrt{1+B^{2}}}\left|y-Bx\right|<Y_{0}\right\} .
\]
 On $\Omega_{0}^{c}$, there holds $-\tilde{r}(Y)=-r(y-Bx)\geq\alpha$.
Therefore this implies 
\begin{align*}
\mathcal{L}\varphi & =-\alpha\varphi+(||u_{0}||_{\infty}-u)\varphi-r(y-Bx)\varphi\geq(||u_{0}||_{\infty}-u)\varphi\geq0.
\end{align*}
Thus $\mathcal{L}(\psi+\varphi)\geq0$ on $(0,+\infty)\times\Omega_{0}^{c}$.

Next, let us consider the domain $\Omega_{0}$. On this domain, $\varphi$
may no longer be a supersolution because $r$ may be greater than
$-\alpha$. However we shall prove that $\psi+\varphi$ is a supersolution.
Indeed, there holds

\begin{subequations}
\label{___L_expr}
\begin{align}
\mathcal{L}\psi & \geq Q_{1}e^{-\gamma(X-st)},
\end{align}
\begin{equation}
\mathcal{L}\varphi\geq-Q_{2}e^{-\alpha t}p(t,y),
\end{equation}
\end{subequations}
where

\begin{subequations}
\label{___Q_constants}
\begin{align}
Q_{1} & \coloneqq\left(\gamma s-\gamma^{2}+\lambda_{0}\right)C\min_{|Y|\leq Y_{0}}\Gamma_{0}(Y)>0,
\end{align}
\begin{equation}
Q_{2}\coloneqq(\alpha+r_{max})||u_{0}||_{\infty}.
\end{equation}
\end{subequations}

Now, let us divide $\Omega_{0}$ into two parts~:
\begin{align*}
\Omega_{-} & \coloneqq\Omega_{0}\cap\{(x,y)\mid BX-Y_{0}\leq\theta\},\\
\Omega_{+} & \coloneqq\Omega_{0}\cap\{(x,y)\mid BX-Y_{0}>\theta\},
\end{align*}
where
\[
\theta\coloneqq\max\left(1,(\sigma_{+}+1)\sqrt{1+B^{2}}\right)>0.
\]

On the domain $\Omega_{-}$, one has 
\begin{align*}
\mathcal{L}\psi & \geq Q_{1}e^{-\gamma X}\geq Q_{1}e^{-\gamma(Y_{0}+\theta)/B}\\
\mathcal{L}\varphi & \geq-Q_{2}e^{-\alpha t}||p(0,\cdot)||_{\infty}\geq-Q_{2}.
\end{align*}
In view of \eqref{___Q_constants} it suffices to take $C$ large
enough to reach $\mathcal{L}(\psi+\varphi)\geq0$ on $(0,+\infty)\times\Omega_{-}$.

It remains to prove that $\mathcal{L}(\psi+\varphi)\geq0$ on $(0,+\infty)\times\Omega_{+}$.
For any $(x,y)\in\Omega_{+}$, we have
\[
y=\frac{BX+Y}{\sqrt{1+B^{2}}}\geq\frac{BX-Y_{0}}{\sqrt{1+B^{2}}}\geq\frac{\theta}{\sqrt{1+B^{2}}}>\sigma_{+}.
\]
Consequently, from \eqref{eq:___p_convol} we obtain
\begin{align*}
p(t,y) & \leq\frac{1}{\sqrt{4\pi t}}(\sigma_{+}-\sigma_{-})e^{-(y-\sigma_{+})^{2}/(4t)}\\
 & =\frac{1}{\sqrt{4\pi t}}(\sigma_{+}-\sigma_{-})\exp\left(-\frac{1}{4t}\left(\frac{BX+Y}{\sqrt{1+B^{2}}}-\sigma_{+}\right)^{2}\right)\\
 & \leq\frac{1}{\sqrt{4\pi t}}(\sigma_{+}-\sigma_{-})e^{-Z^{2}/4t},
\end{align*}
where 
\[
Z\coloneqq\frac{BX-Y_{0}}{\sqrt{1+B^{2}}}-\sigma_{+}\geq\frac{\theta}{\sqrt{1+B^{2}}}-\sigma_{+}\geq1.
\]
As a result, from \eqref{___L_expr}, there holds

\begin{align*}
\mathcal{L}(\psi+\varphi) & \geq Q_{1}e^{-\gamma X}e^{\gamma st}-Q_{2}e^{-\alpha t}p(t,y)\\
 & \geq\widetilde{Q_{1}}e^{-\gamma Z\sqrt{1+1/B^{2}}}e^{\gamma st}-\widetilde{Q_{2}}t^{-1/2}e^{-\alpha t}e^{-Z^{2}/4t},
\end{align*}
where 
\begin{align*}
\widetilde{Q_{1}} & \coloneqq Q_{1}e^{-\gamma\left(Y_{0}+\sqrt{1+B^{2}}\sigma_{+}\right)/B},\\
\widetilde{Q_{2}} & \coloneqq Q_{2}(\sigma_{+}-\sigma_{-})\frac{1}{\sqrt{4\pi}}.
\end{align*}
Thus $\mathcal{L}(\psi+\varphi)\geq0$ on $(0,+\infty)\times\Omega_{+}$
if
\begin{equation}
\widetilde{Q_{1}}e^{-\gamma Z\sqrt{1+B^{2}}/B}\geq\widetilde{Q_{2}}t^{-1/2}e^{-Z^{2}/4t}e^{(-\alpha-\gamma s)t},\label{eq:___maj_Q1_Q2}
\end{equation}
for every $Z\geq1$ and $t>0$. For any such $Z$, the function
\[
g\colon t\mapsto t^{-1/2}e^{-Z^{2}/4t}e^{(-\alpha-\gamma s)t},
\]
defined on $(0,+\infty)$, attains its maximum at 
\[
t_{max}=\frac{\sqrt{1+4(\gamma s+\alpha)Z^{2}}-1}{4(\gamma s+\alpha)}>0,
\]
which, since $\sqrt{1+a^{2}}-1\leq a\leq\sqrt{1+a^{2}}$ for any $a\geq0$,
leads to 
\begin{align*}
g(t) & \leq Q_{3}\exp\left(-\frac{(\gamma s+\alpha)Z^{2}}{\sqrt{1+4(\gamma s+\alpha)Z^{2}}-1}\right)\exp\left(-\frac{\sqrt{1+4(\gamma s+\alpha)Z^{2}}}{4}\right)\\
 & \leq Q_{3}e^{-\sqrt{\gamma s+\alpha}Z},
\end{align*}
with $Q_{3}\coloneqq t_{max}^{-1/2}e^{1/4}>0$. Therefore, \eqref{eq:___maj_Q1_Q2}
amounts to
\[
\widetilde{Q_{1}}e^{-\gamma Z\sqrt{1+1/B^{2}}}\geq\widetilde{Q_{2}}Q_{3}e^{-\sqrt{\gamma s+\alpha}Z}.
\]
Now, let us recall that since $\alpha\geq\gamma^{2}\left(1+\frac{1}{B^{2}}\right)$,
we have
\[
\gamma\sqrt{1+\frac{1}{B^{2}}}\leq\sqrt{\gamma s+\alpha}.
\]
Finally, by increasing $C>0$, if necessary, one has $\widetilde{Q_{1}}\geq\widetilde{Q_{2}}Q_{3}$,
thus $\mathcal{L}(\psi+\varphi)\geq0$ on $(0,+\infty)\times\Omega_{+}$. 

Putting all together, we have thus proved $\mathcal{L}(\psi+\varphi)\geq0$
on $(0,+\infty)\times\mathbb{R}^{2}$ and, from the comparison principle,
$n(t,x,y)\leq\psi(t,x,y)+\varphi(t,x,y)$ on $(0,+\infty)\times\mathbb{R}^{2}$.
In other words, for any $s>s^{\ast}$ and $\alpha\geq\gamma^{2}\left(1+\frac{1}{B^{2}}\right)$,
there exists $C>0$ such that for any $t>0$ and $(x,y)\in\mathbb{R}^{2}$,
there holds
\begin{align}
n(t,x,y) & \leq Ce^{-\gamma(X-st)}\Gamma_{0}(Y)+e^{-\alpha t}u(t,x)p(t,y).\label{eq:___n_supsol}
\end{align}
\\

We are now in the position to complete the proof of \eqref{eq:HTx_vit_c*}.
This requires an additional control of the tails of $n$ which is
postponed to Lemma \ref{lem:HTx_tails}. In the sequel, we select
$C^{\prime}>0$ and $\kappa>0$ such that \eqref{eq:HTx_n_tails}
holds. Choose any $c>c^{\ast}=s^{\ast}/\sqrt{1+B^{2}}$ and $s\in(s^{\ast},c\sqrt{1+B^{2}})$.
Now, fix $\mu>0$ and select $\zeta>0$ large enough so that
\[
C^{\prime}\int_{-\infty}^{-\zeta}e^{-\kappa|y|}dy<\frac{\mu}{2}.
\]
Additionally, there exists $T_{\alpha,\mu}\geq0$ such that for any
$t\geq T_{\alpha,\mu}$ and for all $x\in\mathbb{R}$, there holds
\begin{align*}
\int_{\mathbb{R}}e^{-\alpha t}u(t,x)p(t,y)dy & \leq e^{-\alpha t}||u_{0}||_{\infty}(\sigma_{+}-\sigma_{-})\leq\frac{\mu}{2}.
\end{align*}
Then, set $\xi\coloneqq-\zeta+Bct$. For any $t\geq T_{\alpha,\mu}$,
combining \eqref{eq:___n_supsol} and \eqref{eq:HTx_n_tails} one
obtains
\begin{align*}
\int_{\mathbb{R}}n(t,ct,y)dy & \leq\int_{-\infty}^{\xi}\left[C^{\prime}e^{-\kappa|y-Bct|}+e^{-\alpha t}u(t,ct)p(t,y)\right]dy+\int_{\xi}^{+\infty}\left[\varphi(t,ct,y)+\psi(t,ct,y)\right]dy\\
 & \leq\int_{-\infty}^{-\zeta}C^{\prime}e^{-\kappa|y|}dy+\int_{\mathbb{R}}e^{-\alpha t}u(t,ct)p(t,y)dy+\int_{\xi}^{+\infty}\psi(t,ct,y)dy\\
 & \leq\mu+C\int_{\xi}^{+\infty}\exp\left(-\gamma\left(\frac{ct+By}{\sqrt{1+B^{2}}}-st\right)\right)\Gamma_{0}\left(\frac{y-Bct}{\sqrt{1+B^{2}}}\right)dy\\
 & \leq\mu+C\sqrt{1+B^{2}}\int_{-\zeta/\sqrt{1+B^{2}}}^{+\infty}e^{-\gamma\left(\sqrt{1+B^{2}}ct+Bz-st\right)}\Gamma_{0}(z)dz\\
 & \leq\mu+\widetilde{C}e^{-\gamma\left(\sqrt{1+B^{2}}c-s\right)t}
\end{align*}
where $\widetilde{C}=C\sqrt{1+B^{2}}e^{\gamma B\zeta/\sqrt{1+B^{2}}}\int_{\mathbb{R}}\Gamma_{0}(z)dz>0$.
From there we deduce $\limsup_{t\rightarrow+\infty}\int_{\mathbb{R}}n(t,ct,y)dy\leq\mu$.
This proves \eqref{eq:HTx_vit_c*} since $\mu$ may be taken arbitrarily
small.
\end{proof}
To conclude the above proof, we require the control \eqref{eq:HTx_n_tails}. 
\begin{lem}
\label{lem:HTx_tails}Suppose $r,K$ and $n_{0}$ satisfy the assumptions
of Theorem \ref{thm:gw_HTx}. Let $n$ be any global nonnegative solution
of \eqref{eq:dim_2_xy}. Then there exists $N_{\infty}>0$ such that

\begin{equation}
\int_{\mathbb{R}}n(t,x,y)dy\leq N_{\infty}.\label{eq:n_borne_sup}
\end{equation}
Additionally, for every $\alpha>0$ there exist $C^{\prime},\kappa>0$
such that

\begin{equation}
n(t,x,y)\leq C^{\prime}e^{-\kappa|y-Bx|}+e^{-\alpha t}u(t,x)p(t,y),\label{eq:HTx_n_tails}
\end{equation}
where $u$ and $p$ respectively solve \eqref{eq:___HTx_u_eq}-\eqref{eq:___HTx_p_eq}.
\end{lem}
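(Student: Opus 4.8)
The statement has two parts: the uniform bound \eqref{eq:n_borne_sup} on the local total population, and the pointwise tail estimate \eqref{eq:HTx_n_tails}. The plan is to obtain both by comparison arguments, handling the region far from the optimal line $y=Bx$ and a bounded strip around it separately. I would first record a crude bound: since the nonlocal term is nonnegative and $r\le r_{max}$, $n$ is a subsolution of $\partial_{t}-\partial_{xx}-\partial_{yy}-r_{max}$, so comparison with the heat evolution of $n_{0}\le u_{0}(x)\boldsymbol{1}_{[\sigma_{-},\sigma_{+}]}(y)$ gives $n(t,x,y)\le e^{r_{max}t}\Vert u_{0}\Vert_{\infty}p(t,y)$, where $p$ solves \eqref{eq:___HTx_p_eq}. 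In particular $n(t,\cdot,\cdot)$ has Gaussian decay in $y$, so $N(t,x):=\int_{\mathbb{R}}n(t,x,y)\,dy$ is finite and $\int_{\mathbb{R}}\partial_{yy}n\,dy=0$. Integrating \eqref{eq:dim_2_xy} in $y$ and using $r\le r_{max}$, $K\ge k_{-}$ then yields $\partial_{t}N-\partial_{xx}N\le r_{max}N-k_{-}N^{2}$; comparing with the spatially homogeneous logistic ODE started from $\Vert N(0,\cdot)\Vert_{\infty}\le\Vert u_{0}\Vert_{\infty}(\sigma_{+}-\sigma_{-})$ gives \eqref{eq:n_borne_sup} with $N_{\infty}=\max\bigl(\Vert u_{0}\Vert_{\infty}(\sigma_{+}-\sigma_{-}),\,r_{max}/k_{-}\bigr)$.

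Next I would upgrade this to a time-uniform pointwise bound $n\le C_{\infty}$ on $[0,\infty)\times\mathbb{R}^{2}$. For $t\ge1$ the subsolution property gives $n(t,\cdot)\le e^{r_{max}}\bigl(G_{1}\ast n(t-1,\cdot)\bigr)$, and factoring the two-dimensional heat kernel $G_{1}(x,y)=G_{1}^{(1)}(x)\,G_{1}^{(1)}(y)$, the inner convolution in $y$ is at most $\Vert G_{1}^{(1)}\Vert_{\infty}\,N(t-1,x')\le N_{\infty}/\sqrt{4\pi}$ while $\int_{\mathbb{R}}G_{1}^{(1)}=1$, so $n(t,x,y)\le e^{r_{max}}N_{\infty}/\sqrt{4\pi}$ for $t\ge1$; combining with the crude bound on $[0,1]$ (where $\Vert p(t,\cdot)\Vert_{\infty}\le1$) gives the desired uniform bound. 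Note that this step is exactly where the competition is used beyond step one, and it cannot be avoided.

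For \eqref{eq:HTx_n_tails}, fix $\alpha>0$, choose $\kappa\le\sqrt{\alpha/(1+B^{2})}$, and let $R_{\alpha}$ be given by the confining condition \eqref{eq:r_condition} with $\delta=\alpha$. Set $W(t,x,y):=C'e^{-\kappa|y-Bx|}+e^{-\alpha t}u(t,x)p(t,y)$ and $\mathcal{L}:=\partial_{t}-\partial_{xx}-\partial_{yy}-r(y-Bx)$, for which $n$ is a subsolution. A direct computation gives, away from $y=Bx$, $\mathcal{L}\bigl[e^{-\kappa|y-Bx|}\bigr]=-\bigl(\kappa^{2}(1+B^{2})+r(y-Bx)\bigr)e^{-\kappa|y-Bx|}$, and, using $0\le u\le\Vert u_{0}\Vert_{\infty}$ together with $\partial_{t}p=\partial_{yy}p$, $\mathcal{L}\bigl[e^{-\alpha t}up\bigr]=-(\alpha+r(y-Bx))e^{-\alpha t}up+e^{-\alpha t}u(\Vert u_{0}\Vert_{\infty}-u)p\ge-(\alpha+r(y-Bx))e^{-\alpha t}up$. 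On $\{|y-Bx|\ge R_{\alpha}\}$ one has $r(y-Bx)\le-\alpha\le-\kappa^{2}(1+B^{2})$, so both pieces are supersolutions there and $\mathcal{L}W\ge0$; since $0\notin\{|y-Bx|\ge R_{\alpha}\}$, the corner of $|y-Bx|$ is irrelevant. On the strip $\{|y-Bx|\le R_{\alpha}\}$ I would not argue by supersolution but use the uniform bound of the previous step: if $C'\ge C_{\infty}e^{\kappa R_{\alpha}}$ then $n\le C_{\infty}\le C'e^{-\kappa R_{\alpha}}\le W$ throughout the strip, in particular on its boundary. Finally $n_{0}\le u_{0}(x)\boldsymbol{1}_{[\sigma_{-},\sigma_{+}]}(y)=W(0,x,y)-C'e^{-\kappa|y-Bx|}\le W(0,\cdot,\cdot)$, so $W-n\ge0$ on the parabolic boundary of the exterior region $\{|y-Bx|>R_{\alpha}\}$; since there the zero-order coefficient $-r(y-Bx)\ge\alpha>0$ and $n$, $W$ are bounded on finite time intervals, the maximum principle for bounded solutions yields $n\le W$ on $\{|y-Bx|>R_{\alpha}\}$ as well, which is \eqref{eq:HTx_n_tails}.

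The main obstacle is precisely the strip around the optimal line. There can be no time-uniform supersolution of the linear operator $\mathcal{L}$ that decays in $|y-Bx|$, because near $y=Bx$ the operator $\mathcal{L}$ has principal eigenvalue $\lambda_{0}<0$ and hence forces exponential growth; the boundedness of $n$ on that strip is genuinely a consequence of the nonlocal competition and is recoverable only through the $L^{1}$-in-$y$ bound and the heat-kernel smoothing. The proof must therefore be piecewise — supersolution far from the line, a priori uniform bound near it — and glued with the maximum principle. Two secondary technical points, routine given the bounds above but worth stating, are the merely $L^{\infty}$ regularity of $K$ (so one works with weak sub/supersolutions) and the use of a maximum principle on an unbounded domain, which is justified by the boundedness of $n$ on bounded time intervals.
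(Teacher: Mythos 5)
Your proof is correct and follows the same overall architecture as the paper's: (i) integrate in $y$ and compare with the logistic ODE to get the $L^{1}$-in-$y$ bound $N_{\infty}$; (ii) obtain an a priori time-uniform pointwise bound near the optimal line; (iii) build the supersolution $C'e^{-\kappa|y-Bx|}+e^{-\alpha t}u\,p$ valid outside the strip $\{|y-Bx|<R_{\alpha}\sqrt{1+B^{2}}\}$ and glue it to the pointwise bound with the maximum principle. The one genuine difference is in step (ii): the paper applies the parabolic Harnack inequality on the strip $\overline{\Omega_{R+1}}$ (where the zero-order coefficient, including the nonlocal term viewed as a known function, is bounded by $\Vert r\Vert_{L^{\infty}(\Omega_{R})}+k_{+}N_{\infty}$), comparing $\max_{B_{R}(\bar{x})}n(t,\cdot)$ to $\frac{C_H}{2R}N(t+\tau,\bar{x})\le\frac{C_H N_{\infty}}{2R}$; you instead use heat-kernel smoothing, writing $n(t,\cdot)\le e^{r_{max}}G_{1}\ast n(t-1,\cdot)$ and factoring the 2D Gaussian to reduce the $y$-convolution to $\Vert G_{1}^{(1)}\Vert_{\infty}\,N_{\infty}$. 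Both devices convert the $L^{1}_y$ bound into an $L^{\infty}$ bound. Your version is slightly more elementary (no Harnack constant, only the explicit Gaussian bound and Tonelli) and happens to give a global $L^{\infty}$ bound instead of just a bound on the strip, but since only the strip bound is needed this is a cosmetic gain; the paper's Harnack approach is the same tool it has already set up in Theorem \ref{thm:Harnack_gen}, so it costs nothing there. The computations in your supersolution check and the maximum-principle gluing match the paper's.
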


\begin{proof}
[Proof of Lemma \ref{lem:HTx_tails}]The proof, very similar to
that of \cite[Lemma 2.4]{AlfBerRao_17}, is included here for the
sake of completeness. The first assertion is straightforward. If we
define the mass $N(t,x)\coloneqq\int_{\mathbb{R}}n(t,x,y)dy$, an
integration of \eqref{eq:dim_2_xy} along the $y$ variable provides
the inequality
\[
\partial_{t}N-\partial_{xx}N\leq N(r_{max}-k_{-}N).
\]
Since $N_{0}(x)\leq||u_{0}||_{\infty}(\sigma_{+}-\sigma_{-})$, it
follows from the maximum principle that the mass is uniformly bounded~:
\[
N(t,x)\leq N_{\infty}\coloneqq\max\left(||u_{0}||_{\infty}(\sigma_{+}-\sigma_{-}),\frac{r_{max}}{k_{-}}\right),
\]
which proves \eqref{eq:n_borne_sup}.

Let us now turn to the second assertion. Fix $R>0$ large enough such
that $\tilde{r}(Y)\leq-\alpha$ whenever $|Y|\geq R$. Set 
\[
\Omega_{R}\coloneqq\left\{ (x,y)\in\mathbb{R}^{2}\mid|Y|=\frac{|y-Bx|}{\sqrt{1+B^{2}}}<R\right\} .
\]
Let us prove that $n$ is uniformly bounded on $\Omega_{R}$. In view
of Assumption \ref{assu:func_r_k_n0}, there exists $M>0$ such that
for all $(x,y)\in\Omega_{R+1}$, there holds
\[
\left|r(y-Bx)-\int_{\mathbb{R}}K(t,x,y,y^{\prime})n(t,x,y^{\prime})dy^{\prime}\right|\leq \Vert r \Vert_{L^\infty(\Omega_R)}+k_{+}N_{\infty}\eqqcolon M.
\]
As a consequence, $n$ is the solution of a linear parabolic problem
with bounded coefficients on $\overline{\Omega_{R+1}}$ (the nonlocal
term being treated as a function of $(t,x,y)$), which allows us to
apply the parabolic Harnack inequality (see \cite{Mos_64} for instance).
Fix any $\tau>0$. There exists $C_{H}=C_{H}(\tau,R)>0$ such that
for all $t>0$ and $\overline{x}\in\mathbb{R}$~: 
\begin{align*}
\max_{(x,y)\in B_{R}(\overline{x})}n(t,x,y) & \leq C_{H}\min_{(x,y)\in B_{R}(\overline{x})}n(t+\tau,x,y),
\end{align*}
where $B_{R}(\overline{x})\subset\Omega_{R+1}$ denotes the closed
ball of radius $R$ of center $(\overline{x},B\overline{x})$. This
yields
\[
\max_{(x,y)\in B_{R}(\overline{x})}n(t,x,y)\leq\frac{C_{H}}{2R}\int_{\mathbb{R}}n(t+\tau,\overline{x},y)dy=\frac{C_{H}}{2R}N(t+\tau,\overline{x})\leq\frac{C_{H}N_{\infty}}{2R}.
\]
Seeing that $C_{H}$ does not depend on $\overline{x}$, the population
$n(t,x,y)$ is uniformly bounded by $\frac{C_{H}N_{\infty}}{2R}$
on $\mathbb{R}_{+}\times\overline{\Omega_{R}}$.

To conclude, define, for any $\alpha>0$,
\[
\varphi(t,x,y)\coloneqq Ce^{-\kappa(|y-Bx|-R\sqrt{1+B^{2}})}+e^{-\alpha t}u(t,x)p(t,y),
\]
where $C,\kappa$ are positive constants, and $u,p$ solve \eqref{eq:___HTx_u_eq}
and \eqref{eq:___HTx_p_eq} respectively. Let us check that $n(t,x,y)\leq\varphi(t,x,y)$
on $\mathbb{R}_{+}\times\mathbb{R}^{2}$. The inequality holds for
$t=0$ by \eqref{eq:borne_n0_HTx}. If we choose any $C\geq\frac{C_{H}N_{\infty}}{2R}$,
there holds $n(t,x,y)\leq\varphi(t,x,y)$ on $\mathbb{R}_{+}\times\overline{\Omega_{R}}$.
Next, on the remaining region $(0,+\infty)\times\Omega_{R}^{c}$,
we have $r(y-Bx)\leq-\alpha$, thus $\varphi$ satisfies

\begin{align*}
\partial_{t}\varphi-\partial_{xx}\varphi-\partial_{yy}\varphi-r(y-Bx)\varphi= & \left(-\kappa^{2}(1+B^{2})-r(y-Bx)\right)Ce^{-\kappa(|y-Bx|-R\sqrt{1+B^{2}})}\\
 & +(-\alpha+||u_{0}||_{\infty}-u(t,x)-r(y-Bx))e^{-\alpha t}u(t,x)p(t,y),\\
\geq & \left(\alpha-\kappa^{2}(1+B^{2})\right)Ce^{\kappa R\sqrt{1+B^{2}}}e^{-\kappa|Y|},
\end{align*}
which is nonnegative if we fix any $\kappa\leq\sqrt{\frac{\alpha}{1+B^{2}}}$.
Since $n\geq0$, it is a subsolution of the same operator. The maximum
principle allows us to conclude that $n(t,x,y)\leq\varphi(t,x,y)$
on $(0,+\infty)\times\Omega_{R}^{c}$. We deduce \eqref{eq:HTx_n_tails}
by setting $C^{\prime}\coloneqq Ce^{\kappa R\sqrt{1+B^{2}}}$.
\end{proof}

\paragraph{Acknowledgements.}

This research was supported by the ANR I-SITE MUSE, project MICHEL
170544IA (n° ANR-IDEX-0006). The author would like to thank J. Coville
for raising the issue of ill-directed heavy tails, and his advisor
M. Alfaro, for his regular support and useful remarks.

\bibliographystyle{siam}
\bibliography{biblio}

\end{document}